\newcounter{constant}
\newcommand{\newC}[1]{%
   \refstepcounter{constant} C_{\theconstant}%
   \ifthenelse{\equal{#1}{*}} { } {%
      \label{C:#1}%
   }%
}
\newcommand{\refC}[1]{C_{\ref*{C:#1}}}
\newtheorem{lemma}{Lemma}[section]
\newtheorem{proposition}[lemma]{Proposition}
\newtheorem{theorem}[lemma]{Theorem}
\newtheorem{conjecture}[lemma]{Conjecture}
\Crefname{conjecture}{Conjecture}{Conjectures} 
\newtheorem*{theorem*}{Theorem}
\theoremstyle{definition}
\newtheorem{remark}[lemma]{Remark}
\theoremstyle{definition}
\newtheorem*{definition}{Definition}
\newcommand{\bA}{\mathbb{A}}
\newcommand{\AAA}{\mathbb{A}}
\newcommand{\bC}{\mathbb{C}}
\newcommand{\CC}{\mathbb{C}}
\newcommand{\bG}{\mathbb{G}}
\newcommand{\bQ}{\mathbb{Q}}
\newcommand{\QQ}{\mathbb{Q}}
\newcommand{\bR}{\mathbb{R}}
\newcommand{\RR}{\mathbb{R}}
\newcommand{\bS}{\mathbb{S}}
\newcommand{\bZ}{\mathbb{Z}}
\newcommand{\ZZ}{\mathbb{Z}}
\newcommand{\Qbar}{\overline \bQ}
\newcommand{\cA}{\mathcal{A}}
\newcommand{\cF}{\mathcal{F}}
\newcommand{\cH}{\mathcal{H}}
\newcommand{\cO}{\mathcal{O}}
\newcommand{\Ag}{\mathcal{A}_g}
\newcommand{\Agt}{\cA_{g,3}}
\newcommand{\Hg}{\mathcal{H}_g}
\newcommand{\fA}{\mathfrak{A}}
\newcommand{\fF}{\mathfrak{F}}
\newcommand{\fG}{\mathfrak{G}}
\newcommand{\fS}{\mathfrak{S}}
\newcommand{\gG}{\mathbf{G}}
\newcommand{\gGL}{\mathbf{GL}}
\newcommand{\gGSp}{\mathbf{GSp}}
\newcommand{\gH}{\mathbf{H}}
\newcommand{\gM}{\mathbf{M}}
\newcommand{\gP}{\mathbf{P}}
\newcommand{\gS}{\mathbf{S}}
\newcommand{\gSL}{\mathbf{SL}}
\newcommand{\gSO}{\mathbf{SO}}
\newcommand{\gSp}{\mathbf{Sp}}
\newcommand{\gT}{\mathbf{T}}
\newcommand{\gU}{\mathbf{U}}
\newcommand{\rH}{\mathrm{H}}
\newcommand{\rM}{\mathrm{M}}
\newcommand{\rO}{\mathrm{O}}
\DeclareMathOperator{\Aut}{Aut}
\DeclareMathOperator{\diam}{diam}
\DeclareMathOperator{\disc}{disc}
\DeclareMathOperator{\End}{End}
\DeclareMathOperator{\Gal}{Gal}
\DeclareMathOperator{\Hom}{Hom}
\DeclareMathOperator{\lcm}{lcm}
\DeclareMathOperator{\MT}{MT}
\DeclareMathOperator{\Res}{Res}
\DeclareMathOperator{\rk}{rk}
\DeclareMathOperator{\Sh}{Sh}
\DeclareMathOperator{\Sp}{Sp}
\DeclareMathOperator{\Stab}{Stab}
\DeclareMathOperator{\tr}{tr}
\newcommand{\ad}{\mathrm{ad}}
\newcommand{\Cay}{\mathrm{Cay}}
\newcommand{\der}{\mathrm{der}}
\newcommand{\detstar}{\det\nolimits^*}
\newcommand{\abs}[1]{\lvert #1 \rvert}
\newcommand{\bs}{\backslash}
\newcommand{\isom}{\cong}
\newcommand{\ov}{\overline}
\newcommand{\half}{\tfrac{1}{2}}
\newcommand{\fullsmallmatrix}[4]{\bigl( \begin{smallmatrix} #1 & #2 \\ #3 & #4 \end{smallmatrix} \bigr)}
\newcommand{\defterm}[1]{\textbf{#1}}
\title{Unlikely intersections with \( E \, \times \!\! \) CM curves in $\mathcal{A}_2$}
\author{Christopher Daw}
\address{Daw: Department of Mathematics and Statistics, University of Reading,
    White\-knights,  PO Box 217,  Reading,  Berkshire RG6 6AH,  United Kingdom}
\email{chris.daw@reading.ac.uk}
\author{Martin Orr}
\address{Orr: Mathematics Institute, Zeeman Building, University of Warwick, Coventry CV4 7AL, United Kingdom}
\email{martin.orr@warwick.ac.uk}
\subjclass[2010]{11G15, 11G18, 14G35}
\keywords{Abelian surfaces, Shimura varieties, Zilber--Pink conjecture}
\begin{document}

\begin{abstract}
The Zilber--Pink conjecture predicts that an algebraic curve in $\mathcal{A}_2$ has only finitely many intersections with the special curves, unless it is contained in a proper special subvariety.
Under a large Galois orbits conjecture, we prove the finiteness of the intersection with the special curves parametrising abelian surfaces isogenous to the product of two elliptic curves, at least one of which has complex multiplication. Furthermore, we show that this large Galois orbits conjecture holds for curves satisfying a condition on their intersection with the boundary of the Baily--Borel compactification of $\mathcal{A}_2$. 

More generally, we show that a Hodge generic curve in an arbitrary Shimura variety has only finitely many intersection points with the generic points of a Hecke--facteur family, again under a large Galois orbits conjecture.
\end{abstract}

\maketitle

\section{Introduction}

Let \( \cA_2 \) denote the moduli space of principally polarised abelian surfaces over~\( \bC \).
Let \( V \subset \cA_2 \) be an irreducible algebraic curve.
The Zilber--Pink conjecture predicts that, if \( V \) is not contained in any proper special subvariety of \( \cA_2 \), then it should have only finitely many intersections with the special curves of \( \cA_2 \) (since \( \dim \cA_2 = 3 > 1 + 1 \)).

The special curves in \( \cA_2 \) are of three types:
\begin{enumerate}
\item curves parametrising abelian surfaces with quaternionic multiplication;
\item curves parametrising abelian surfaces isogenous to the square of an elliptic curve;
\item curves parametrising abelian surfaces isogenous to the product of two elliptic curves, at least one of which has complex multiplication (CM).
\end{enumerate}
In this paper, we consider the special curves of type (3), which we refer to as \( E \, \times \! \) CM curves. We consider the special curves of type (1) and type (2) in a forthcoming paper \cite{DO20}.

There is a general strategy for proving conjectures of Zilber--Pink type known as the Pila--Zannier strategy. This was expounded for arbitrary Shimura varieties in \cite{DR}. The upshot of this strategy is that problems of unlikely intersections can be resolved whenever two arithmetic ingredients can be obtained. The first, usually referred to as a large Galois orbits conjecture, concerns the sizes of Galois orbits of points arising from unlikely intersections. This is well-known to be a very difficult problem in general. The second ingredient, which is less well-known, concerns the parametrisation of special subvarieties and has much to do with the reduction theory of arithmetic groups.
 
In this article, we make progress on both these arithmetic ingredients for unlikely intersections with \( E \, \times \! \) CM curves.
We prove that the large Galois orbits conjecture for intersections between a general curve \( V \subset \cA_2 \) and \( E \, \times \! \) CM curves holds whenever \( V \) is defined over \( \Qbar \) and the closure of \( V \) in the Baily--Borel compactification of \( \cA_2 \) intersects the zero-dimensional stratum of the boundary. This geometric condition on $V$ can also be understood in terms of abelian schemes (see \cref{mult-reduction}).
It should be noted that this condition is not generic.
It is central to the theorem of André on which our proof of the large Galois orbits conjecture is based \cite[Ch.~X, 4.2~(i)]{And89}.

Furthermore, we solve the parametrisation problem for \( E \, \times \! \) CM curves in \( \cA_2 \) in full.
Consequently, we prove that if \( V \) is a curve not contained in any proper special subvariety of \( \cA_2 \) then it has only finitely many intersections with \( E \, \times \! \) CM curves, provided either that \( V \) satisfies the condition on its intersection with the boundary or that the large Galois orbits conjecture (\cref{galois-orbits-conj}) holds.

\subsection{Statement of main theorems}

Recall that the Baily--Borel (or minimal) compactification of $\cA_2$ has a stratification $\cA_2\sqcup\cA_1\sqcup\cA_0$ by Zariski locally closed subvarieties, where $\cA_1$ is the moduli space of elliptic curves, and $\cA_0$ is a point (see \cite[Ch.~IV, sec.~2]{FC90}). 
Our main theorem is as follows. 

\begin{theorem} \label{galois-orbits-thm}
Let \( \Sigma \) denote the set of points in \( \cA_2 \) for which the associated principally polarised abelian surface is isogenous to a product of elliptic curves \( E_1 \times E_2 \) such that \( E_2 \) has complex multiplication and $E_1$ does not.

Let \( V \) be an irreducible algebraic curve in \( \cA_2 \) which is not contained in any proper special subvariety. 

Suppose that \( V \) is defined over \( \Qbar \) and that the Zariski closure of \( V \) in the Baily--Borel compactification of \( \cA_2 \) intersects the \( 0 \)-dimensional stratum of the boundary of the compactification.

Then \( V \cap \Sigma \) is finite.
\end{theorem}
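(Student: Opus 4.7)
The plan is to apply the Pila--Zannier strategy for Shimura varieties as formulated in \cite{DR}, combining o-minimal point counting on $\mathfrak{H}_2$ with two arithmetic inputs: a parametrisation of the family of $E \times$ CM curves (via reduction theory) and a large Galois orbits lower bound (via André's theorem). I would begin by lifting $V$ through the uniformisation $\pi \colon \gSp_4(\ZZ) \backslash \mathfrak{H}_2 \to \cA_2$, and by realising the $E \times$ CM curves as a Hecke--facteur family: each such curve is a Hecke translate of the image of a sub-Shimura datum $\gGL_{2,\bQ} \times \gT \hookrightarrow \gGSp_{4,\bQ}$, where $\gT$ is a CM torus corresponding to the CM elliptic factor. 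Granting the more general Hecke--facteur result promised in the abstract, finiteness of $V \cap \Sigma$ will follow once the large Galois orbits bound is established and the parametrisation hypotheses are verified, since $V$ is Hodge-generic by assumption.

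For the parametrisation step, I would employ reduction theory for $\gSp_4(\ZZ)$ to bound the height of a Hecke element $g \in \gGSp_4(\bQ)$ representing an isogeny $A \sim E_1 \times E_2$ polynomially in the isogeny degree and in the discriminant of the CM order of $E_2$. This yields a uniform polynomial bound on the height of pre-images in a Siegel fundamental set $\mathcal{F}$, definable in $\bR_{an,\exp}$. The key arithmetic input is then the large Galois orbits lower bound, and this is where the geometric hypothesis on the boundary enters: it ensures that the abelian scheme attached to $V$ acquires a point of totally multiplicative reduction (as clarified in \cref{mult-reduction}), so that André's theorem \cite[Ch.~X, 4.2~(i)]{And89} applies and delivers a bound of the form $[\bQ(P) : \bQ] \gg_\epsilon \disc(\End(E_2))^{\delta - \epsilon}$ for some $\delta > 0$, whenever $P \in V \cap \Sigma$ corresponds to an abelian surface isogenous to $E_1 \times E_2$.

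Combining these ingredients, one would argue by contradiction: if $V \cap \Sigma$ were infinite, each such point together with its Galois conjugates would produce many pre-images in the definable set $Z = \pi^{-1}(V) \cap \mathcal{F}$, all of polynomially bounded height. The Pila--Wilkie counting theorem would then produce a positive-dimensional semi-algebraic arc inside $Z$ contained in the pre-image of an $E \times$ CM curve. By the hyperbolic Ax--Lindemann (or Ax--Schanuel) theorem for $\cA_2$, this arc would force $V$ to lie in a proper special subvariety, contradicting the Hodge-genericity hypothesis. The main obstacle in this strategy is precisely the Galois orbits lower bound: without the boundary hypothesis, no unconditional lower bound on $[\bQ(P):\bQ]$ in terms of $\disc(\End(E_2))$ is known, and it is the multiplicative-reduction condition imposed by the hypothesis that allows André's theorem to be invoked unconditionally here.
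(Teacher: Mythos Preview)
Your high-level strategy matches the paper's: realise the $E \times$ CM curves as a Hecke--facteur family, establish parameter height bounds via reduction theory, establish large Galois orbits via Andr\'e, and feed both into the Pila--Zannier machinery of \cite{DR}. However, two genuine technical gaps remain in your Galois orbits step.

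First, Andr\'e's theorem in \cite[Ch.~X]{And89} is stated only for abelian schemes of \emph{odd} relative dimension; here $g=2$. The paper devotes an entire section to extending the result: the oddness hypothesis is replaced by the condition that $\End(\fA_{\bar\eta})\otimes\bQ$ is a totally real field of odd degree (automatic when $\End(\fA_{\bar\eta})=\bZ$), and the proof requires reworking Andr\'e's ``Construction~2.4.1'' using new lemmas on endomorphism algebras of exceptional abelian surfaces. You cannot simply invoke \cite{And89} as written.

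Second, even granting Andr\'e's height bound $h(s)\leq C\,[K(s):K]^{c}$, this does \emph{not} directly yield $[\bQ(P):\bQ]\gg\disc(\End(E_2))^{\delta}$, and in any case the complexity you must beat in the counting step involves both $\disc(\End(E_2))$ \emph{and} the minimal isogeny degree $N(s)$. The paper bridges this as follows: Andr\'e plus Faltings' height comparison controls $h_F(\fA_s)$ by $[K(s):K]$; then the Masser--W\"ustholz isogeny theorem bounds $N(s)$ polynomially in $\max(h_F(\fA_s),[K(s):K])$; finally, a uniqueness lemma for the minimal-degree splitting shows that $E_2$ is defined over $K(s)$, so a Brauer--Siegel refinement controls $\disc(\End(E_2))$ by $[K(s):K]$. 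Without Masser--W\"ustholz your Galois bound addresses only $\disc$, not $N(s)$, and the Pila--Wilkie count would not close against your own parametrisation bound, which you correctly state depends on both quantities.
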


More generally, without the geometric assumption on $V$, we have the following conditional result.

\begin{theorem} \label{main-thm}
Let \( \Sigma \) be as in \cref{galois-orbits-thm}.
Let \( V \) be an irreducible algebraic curve in \( \cA_2 \) which is not contained in any proper special subvariety.

Assume that \( V \) satisfies \cref{galois-orbits-conj}.

Then \( V \cap \Sigma \) is finite.
\end{theorem}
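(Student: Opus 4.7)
The plan is to apply the Pila--Zannier strategy for unlikely intersections in Shimura varieties, in the form developed for general Shimura varieties in \cite{DR}. The starting observation is that the $E \times$~CM curves form the generic points of a Hecke--facteur family in $\cA_2$: each such curve is the image under the uniformisation $\pi : \mathfrak{H}_2 \to \cA_2$ of a Hecke translate of a ``vertical'' slice $\mathfrak{H}_1 \times \{\tau_0\} \subset \mathfrak{H}_2$, where $\tau_0 \in \mathfrak{H}_1$ is a CM point on an elliptic modular curve and $\mathfrak{H}_1 \times \{\tau_0\}$ sits inside $\mathfrak{H}_2$ as the locus of block-diagonal period matrices. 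The parametrisation problem, which the paper resolves in full, makes this description quantitative: it supplies an explicit list of representatives for the Hecke data together with sharp bounds on their complexity in terms of the isogeny degree and the CM discriminant.

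With this parametrisation in hand, I would fix a Siegel fundamental set $\cF \subset \mathfrak{H}_2$ for the action of $\gSp_4(\bZ)$ and lift $V$ to $Z := \pi^{-1}(V) \cap \cF$. Standard arguments in o-minimality show that $Z$, together with the family of parametrising maps for the $E \times$~CM locus, is definable in $\bR_{\mathrm{an},\exp}$. To each point $s \in V \cap \Sigma$ the parametrisation then associates a point of $Z$ together with a Hecke matrix in $\gSp_4(\bQ)$ of controlled denominators, producing a rational point of height polynomial in the complexity of the $E \times$~CM curve through $s$, inside a definable auxiliary set built from $Z$ and the family of parametrising maps.

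The large Galois orbits assumption \cref{galois-orbits-conj} provides a lower bound, polynomial in the same complexity, for the size of the Galois orbit of such an $s$. The Pila--Wilkie counting theorem applied to the definable auxiliary set gives a sublinear upper bound on the number of rational points of bounded height lying off its algebraic part. Choosing the exponents correctly, the Galois lower bound eventually exceeds the Pila--Wilkie upper bound, so all but finitely many points of $V \cap \Sigma$ must lift to positive-dimensional semi-algebraic blocks inside the definable set. The hyperbolic Ax--Lindemann theorem of Mok--Pila--Tsimerman then forces any such block to propagate to a weakly special subvariety of $\cA_2$ containing a component of $V$; since $V$ is Hodge generic, no proper such subvariety can contain it, and we conclude that $V \cap \Sigma$ is finite.

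The principal obstacle is ensuring compatibility between the three ingredients: the parametrisation of the $E \times$~CM family, the height appearing in Pila--Wilkie, and the complexity used in the Galois orbits bound. In particular one must show that a generic point of a given $E \times$~CM curve admits a preimage in $\cF$ whose associated Hecke matrix has height bounded polynomially in the natural arithmetic invariants of the curve (the CM discriminant and the isogeny degree), and that the same invariants control \cref{galois-orbits-conj}. This is precisely the content of the general Hecke--facteur framework announced in the abstract, from which \cref{main-thm} then drops out by specialisation to $E \times$~CM curves in $\cA_2$.
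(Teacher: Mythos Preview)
Your outline is essentially the paper's approach: recognise the $E \times$~CM curves as a Hecke--facteur family, feed the parameter height bounds for that family (the paper's Section~6) and the Galois orbits hypothesis into the Pila--Zannier machine of \cite{DR}, and conclude via o-minimal counting plus Ax--Schanuel. The paper carries this out by proving the general \cref{main-thm-gen} and specialising.

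There is, however, one genuine ingredient you have not addressed. For the counting step to produce many rational points in the definable set, each Galois conjugate $\sigma(s)$ of a point $s \in V \cap \Sigma$ must itself lie in $\Sigma$ and admit a parametrisation of the same shape. Equivalently, one needs that the smallest special subvariety through $\sigma(s)$ is again an $E \times$~CM curve, i.e.\ that $\Gal(\Qbar/F)$ permutes the members of the Hecke--facteur family for a suitable finite extension~$F$. This is not automatic from general principles and is proved separately in the paper (\cref{galoisonhecke}) using canonical models and the decomposition of the adjoint Shimura datum. Without it the Galois orbit of $s$ does not obviously furnish the required supply of rational points to play off against Pila--Wilkie. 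The paper also sidesteps any direct comparison of $\Delta(Z)$ with $\Delta(\sigma(Z))$ by bounding $\rH_d(g_\sigma,x_\sigma)$ against $\#\Aut(\bC/L)\cdot\sigma(P) = \#\Aut(\bC/L)\cdot P$ directly; your sketch implicitly assumes such a comparison is available.

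A second point you absorb into ``the parametrisation problem, which the paper resolves in full'' is the height bound for coset representatives in $\Gamma/\Gamma(n)$ (\cref{lift-bound}), proved via property~$(\tau)$ and expander graphs. This is where a substantial part of the technical work sits, and it is what makes \cref{conj127-det} and hence \cref{12.2} go through.
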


The following large Galois orbits conjecture, referred to in the above, is similar to a special case of \cite[Conjecture~11.1]{DR} (which was itself inspired by \cite[Conjecture~8.2]{HP16}) except that we use a definition of complexity adapted to \( E \, \times \! \) CM curves in place of the more general definition used in \cite{DR}.
For the definition of the complexity \( \Delta(Z) \), see section~\ref{sec:complexity}.

\begin{conjecture} \label{galois-orbits-conj}
Let \( V \) and \( \Sigma \) be as in \cref{main-thm}. Let \( L \) be a finitely generated subfield of \( \bC \) over which \( V \) is defined.

There exist positive constants \( \newC{galois-orbits-mult} \) and \( \newC{galois-orbits-exp} \) such that, for all points \( s \in V \cap \Sigma \), if we let \( Z \) denote the (unique) special curve containing \( s \), then
\[ \# \Aut(\bC/L) \cdot s  \geq  \refC{galois-orbits-mult} \Delta(Z)^{\refC{galois-orbits-exp}}. \]
\end{conjecture}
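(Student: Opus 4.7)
The overall strategy is to bound the size of the Galois orbit of $s$ from below by the size of the Galois orbit of the CM isogeny factor of $A_s$, and then invoke classical lower bounds on class numbers of imaginary quadratic orders. The two inputs should combine to give the polynomial lower bound in $\Delta(Z)$ demanded by the conjecture, with the decisive new ingredient being a monodromy theorem in the style of André.

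First, I would make the complexity concrete. The unique $E \times$ CM curve $Z$ containing $s$ is determined, up to finite ambiguity, by the CM field $K$ of a CM factor of $A_s$, so one expects $\Delta(Z)$ to be essentially $\abs{\disc K}$. By the Landau/Brauer--Siegel-type lower bound on class numbers of imaginary quadratic orders, for any CM elliptic curve $E_2$ with $\End(E_2) = \cO \subset \cO_K$ one has
\[ [\QQ(j(E_2)):\QQ] \;=\; h(\cO) \;\gg\; h(\cO_K) \;\gg_\epsilon\; \abs{\disc K}^{1/2-\epsilon} \;\gg_\epsilon\; \Delta(Z)^{1/2-\epsilon} \]
for every $\epsilon > 0$. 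This handles the CM half of the estimate.

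Next comes the crucial transfer step: show that if $\sigma \in \Aut(\bC/L)$ fixes $s$, then $\sigma$ acts with bounded orbit on $j(E_2)$, so that the Galois orbit of $s$ is at least a fixed power of $[\QQ(j(E_2)):\QQ]$. This is not automatic, because from the polarised abelian surface $A_s$ alone one only recovers the $\Qbar$-isogeny class of its CM factor, and over $\Qbar$ this isogeny class only remembers the CM field $K$, not the particular $j$-invariant. At this point I would invoke the hypothesis that the Baily--Borel closure of $V$ meets the $0$-dimensional boundary stratum. Under this totally multiplicative degeneration hypothesis, André's theorem \cite[Ch.~X, 4.2~(i)]{And89} forces the image of the Galois representation on the first cohomology of the generic fibre of the associated abelian scheme over (a suitable covering of) $V$ to be open in the corresponding Mumford--Tate group in an arithmetic sense. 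Combined with open-image arguments in the style of Serre and with the CM decomposition at $s$, this should yield that $L(s)$ contains the field of definition of $E_2$ up to an extension of degree bounded independently of $s$ and $Z$, from which the required lower bound follows.

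The main obstacle is precisely this transfer step. Without the boundary hypothesis there appears to be no substitute for André's theorem capable of relating the Galois orbit of a point of $\cA_2$ to the Galois orbit of a CM sub-object of the underlying abelian surface, which is why \cref{galois-orbits-conj} is stated as a conjecture in general and why \cref{main-thm} is stated only conditionally on it. Even under the boundary hypothesis, some bookkeeping is needed to track the conductor of $\End(E_2)$, the isogeny degrees appearing in a decomposition $A_s \sim E_1 \times E_2$, and the principal polarisation on $A_s$, since each of these enters both $\Delta(Z)$ and the size of the Galois orbit in a non-trivial way.
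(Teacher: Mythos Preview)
The statement you are asked to prove is a \emph{conjecture}, and the paper does not claim a proof of it in general; it proves it only under the boundary hypothesis (this is \cref{go-sch-implies-go}). You correctly recognise this and restrict yourself to that case, so the relevant comparison is between your sketch and the paper's proof of the boundary case.

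Your approach diverges from the paper's in two substantive ways, and the first is a genuine gap. You assert that ``one expects $\Delta(Z)$ to be essentially $\abs{\disc K}$'', but this is not correct: by definition $\Delta(Z)=\max\{N(Z),\min_s\Delta(s)\}$, and the paper shows (\cref{comp-poly-equiv}) that $\Delta(Z)$ is polynomially equivalent to $\Delta'(Z)=\max\{N(Z),\abs{\disc(\End(E_2))}\}$. The isogeny-degree term $N(Z)$ can be arbitrarily large for fixed $K$, and your argument nowhere bounds it. The paper handles $N(Z)$ via a route you do not mention: Andr\'e's result is used not as a monodromy or open-image statement but as a \emph{height bound} (the paper's \cref{andre-height-bound}, generalising \cite[Ch.~X, Theorem~1.3]{And89}), giving $h(s)\ll[L(s):L]^c$ for exceptional fibres; this is converted to a Faltings height bound and then fed into the Masser--W\"ustholz isogeny theorem to obtain $N(s)\ll[L(s):L]^c$.

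Second, your ``transfer step'' is both harder and softer than it needs to be. You propose to deduce that $E_2$ is defined over a bounded extension of $L(s)$ from an open-image theorem, but the paper gets this much more cheaply: by \cref{min-isogeny-unique}, the pair $(E_1,E_2)$ realising the minimal isogeny degree $N(s)$ is unique, so any $\sigma\in\Aut(\bC/L(s))$ must permute $\{E_1,E_2\}$, and since exactly one has CM it fixes each. Thus $E_2$ is defined over $L(s)$ outright, and Brauer--Siegel (in the form of \cite[Proposition~(1.8)]{Len87}) handles the discriminant term. No monodromy input is required for this step; the heavy Andr\'e input enters only through the height bound controlling $N(Z)$.
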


The statement of Conjecture \ref{galois-orbits-conj} relies on the fact that each point of \( \Sigma \) lies in a \textit{unique} special curve. This is true because the intersection of two distinct special curves consists of special points and we have excluded these from \( \Sigma \). In other words, the set \( \Sigma \) is not equal to the union of the \( E \, \times \! \) CM curves, because we have removed the (countably many) points where the corresponding abelian surface is isogenous to a product of two CM elliptic curves.

Removing these special points is important for \cref{galois-orbits-conj}, because such a point lies in the intersection of infinitely many \( E \, \times \! \) CM curves.
Therefore we cannot expect the Galois orbit of such a point to be bounded below by the complexities of each of the \( E \, \times \! \) CM curves which contain it.
Instead, the Galois orbit of such a point is controlled by its own complexity as a special subvariety, as proved independently by Tsimerman \cite{Tsi12} and by Ullmo and Yafaev \cite{UY15}.

Furthermore, Pila and Tsimerman have used this bound for special points to prove the Andr\'e--Oort conjecture for $\mathcal{A}_2$ \cite{PT13}, that is, a non-special curve $V$ in~$\mathcal{A}_2$ contains only finitely many special points.
Hence, a curve $V$ as in Theorem \ref{main-thm} has finitely many intersections with the \( E \, \times \! \) CM curves if and only if it contains only finitely many points belonging to $\Sigma$.

\subsection{Alternative statements in terms of abelian schemes} \label{sec:alt-state}

It is also possible to state \cref{galois-orbits-thm,main-thm} using the language of abelian schemes instead of curves in $\cA_2$.
In these statements, if \( V \) is an algebraic curve and \( \fA \to V \) is an abelian scheme of relative dimension~\( 2 \), we say that a point \( s \in V \) is an \defterm{E \( \! \times \!\! \) CM point} if the fibre \( \fA_s \) is isogenous to a product of elliptic curves \( E_1 \times E_2 \) where \( E_2 \) has complex multiplication and \( E_1 \) does not.

Our proof of \cref{galois-orbits-thm} passes through the following theorem (we prove at \cref{go-sch-implies-go} that this theorem implies \cref{galois-orbits-thm}).

\begin{theorem} \label{galois-orbits-thm-sch}
Let \( V \) be an irreducible algebraic curve over \( \bC \) and let \( \fA \to V \) be a principally polarised non-isotrivial abelian scheme of relative dimension~\( 2 \). Suppose that \( \End(\fA_{\ov\eta}) = \bZ \), where \( \ov\eta \) denotes a geometric generic point of \( V \).

Suppose that \( V \) and \( \fA \) are defined over $\Qbar$.

Suppose that there exist a curve \( V' \), a semiabelian scheme \( \fA' \to V' \) and an open immersion \( \iota \colon V \to V' \) (all defined over \( \Qbar \)) such that \( \fA \isom \iota^* \fA' \) and there is some point \( s_0 \in V'(\ov\bQ) \) for which the fibre \( \fA'_{s_0} \) is a torus.

Then \( V \) contains only finitely many \( E \, \times \! \) CM points.
\end{theorem}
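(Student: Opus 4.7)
The plan is to reduce this theorem to the conditional \cref{main-thm} (applied to the image of $V$ under the moduli map to $\cA_2$), by verifying the large Galois orbits \cref{galois-orbits-conj} for $V$. The arithmetic engine supplying the required Galois orbits bound will be a theorem of André [And89, Ch.~X, 4.2(i)] concerning Galois representations attached to degenerating families of abelian varieties. Its hypotheses are activated precisely by our assumption that $\fA' \to V'$ has a fibre $\fA'_{s_0}$ of maximal toric rank over a $\Qbar$-point.

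The hypothesis $\End(\fA_{\ov\eta}) = \ZZ$ forces the generic Mumford--Tate group of $\fA \to V$ to be $\gGSp_4$, so the image of $V$ in $\cA_2$ is a Hodge generic curve. A point $s \in V$ is an $E \times \textnormal{CM}$ point exactly when its image lies in $\Sigma$, and since all data is defined over $\Qbar$, Galois orbits of $s$ in $V$ and of its image in $\cA_2$ differ in size by at most a bounded multiplicative factor. Thus, granting \cref{main-thm}, it suffices to verify \cref{galois-orbits-conj} for (the image of) $V$.

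Fix a finitely generated field $L \subset \bC$ over which $V$, $\fA$, $V'$, $\fA'$ and $s_0$ are all defined, and let $s \in V \cap \Sigma$. Write $\fA_s \sim E_1 \times E_2$ with $E_2$ having CM by an order $\cO$ in an imaginary quadratic field, and let $Z$ denote the special curve through $s$. The parametrisation of $E \times \textnormal{CM}$ curves carried out elsewhere in the paper (together with the definition of $\Delta$ in \S\ref{sec:complexity}) should give that $\Delta(Z)$ is polynomially comparable to $|\disc \cO|$ times the minimal degree of a polarised isogeny realising the decomposition $\fA_s \sim E_1 \times E_2$. André's theorem, applied to the family $\fA' \to V'$ at the toric fibre $\fA'_{s_0}$, is then expected to supply a bound of the form
\[ [L(s) : L] \;\geq\; c\, |\disc \cO|^{\alpha} \]
with positive constants $c, \alpha$ independent of $s$.

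The main obstacle is the passage from such a bound to the stronger estimate in $\Delta(Z)$ required by \cref{galois-orbits-conj}: André's theorem controls only the CM discriminant of the degenerating fibre, while $\Delta(Z)$ also records an isogeny-degree contribution. Absorbing this into a power of $|\disc \cO|$ uniformly in $s$ will require a polarised isogeny estimate of Masser--W\"ustholz type, coupled with a careful analysis of how the Hecke--facteur datum determining $Z$ relates to the isogeny $\fA_s \to E_1 \times E_2$. A secondary and more technical concern is ensuring that the constants supplied by André's theorem depend only on the family $\fA' \to V'$ and not on $s$, which should follow because his estimates depend only on local monodromy data intrinsic to the family.
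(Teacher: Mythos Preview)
Your overall architecture---reduce to \cref{main-thm} by verifying the Galois orbits conjecture for the image of $V$ in $\cA_2$---matches the paper. But you have misidentified what Andr\'e's theorem actually delivers, and this inverts the logical flow of the argument. Andr\'e's result (Ch.~X, Theorem~1.3 of \cite{And89}, generalised in this paper as \cref{andre-height-bound}) is a \emph{height bound}: for an exceptional fibre $\fA_s$ one gets $h(s) \leq C\,[L(s):L]^{\alpha}$, where $h$ is a Weil height on $V'$. It says nothing directly about $\abs{\disc\cO}$. The chain in the paper is: Andr\'e bounds $h(s)$ polynomially in $[L(s):L]$; Faltings's comparison converts this to a bound on $h_F(\fA_s)$; one shows $E_1,E_2$ are defined over $L(s)$ (using the uniqueness \cref{min-isogeny-unique}); then Masser--W\"ustholz gives an isogeny $E_1\times E_2\to\fA_s$ of degree polynomial in $\max(h_F(\fA_s),[L(s):L])$, hence $N(s)$ is polynomially bounded in $[L(s):L]$. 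The discriminant $\abs{\disc(\End(E_2))}$ is handled \emph{separately} by Brauer--Siegel (more precisely Lenstra's refinement \cite{Len87}), using that $E_2$ is defined over $L(s)$. You omit Brauer--Siegel entirely, and your ``main obstacle'' (absorbing the isogeny degree into a power of $\abs{\disc\cO}$) is not what is needed: the two contributions to $\Delta'(s)$ are bounded independently.

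There is also a genuine technical point you have not registered: Andr\'e's original theorem in \cite{And89} assumes $g$ is \emph{odd}, so does not apply to abelian surfaces. A substantial part of the paper (Section~\ref{sec:andre}) is devoted to generalising the height bound to the case where the generic endomorphism algebra is a totally real field of odd degree---in particular $\End(\fA_{\ov\eta})=\bZ$ with $g=2$---by reworking Andr\'e's Construction~2.4.1 via \cref{Ehat-E,Ehat-E2}. Without this extension, the argument does not even get started for $g=2$.
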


\Cref{main-thm} can be reformulated in a similar style to \cref{galois-orbits-thm-sch}.
Likewise, \cref{galois-orbits-conj} is equivalent to the following conjecture (see \cref{galois-orbits-equiv}).
The complexity $\Delta'(s)$ used in this conjecture is defined in \cref{delta's}.

\begin{conjecture} \label{galois-orbits-conj-sch}
Let \( V \) be an irreducible algebraic curve over \( \bC \) and let \( \fA \to V \) be a principally polarised non-isotrivial abelian scheme of relative dimension~\( 2 \).
Suppose that \( \End(\fA_{\ov\eta}) = \bZ \), where \( \ov\eta \) denotes a geometric generic point of \( V \).

Let \( L \) be a finitely generated subfield of \( \bC \) over which \( V \) and \( \fA \to V \) are defined.

There exist positive constants \( \newC{galois-orbits-sch-mult} \) and \( \newC{galois-orbits-sch-exp} \) such that, for all \( E \, \times \! \) CM points \( s \in V \),
\[ \# \Aut(\bC/L) \cdot s  \geq  \refC{galois-orbits-sch-mult} \Delta'(s)^{\refC{galois-orbits-sch-exp}}. \]
\end{conjecture}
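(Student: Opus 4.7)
The plan is to attack \cref{galois-orbits-conj-sch} by reducing the Galois-orbit bound on an $E \times$ CM point $s$ to two separate arithmetic inputs: a lower bound for the Galois orbit of the CM factor, and a lower bound driven by the isogeny degree linking $\fA_s$ to a product of elliptic curves. Concretely, if $\fA_s$ is isogenous to $E_1 \times E_2$ with $E_2$ CM and $E_1$ not, then the isogeny class of $\fA_s$ is Galois-invariant, so the $\Aut(\bC/L)$-orbit of $s$ surjects onto the Galois orbit of the pair $(j(E_1), j(E_2))$ up to finite ambiguity arising from the choice of decomposition of $\fA_s$ up to isogeny. Granting that $\Delta'(s)$ factorises into the discriminant of $\End(E_2)$ and the degree of a minimal isogeny $\fA_s \to E_1 \times E_2$, it then suffices to produce a polynomial lower bound in each factor separately.

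The CM piece is handled by the Tsimerman / Ullmo--Yafaev theorem on Galois orbits of CM points in $\cA_1$: after enlarging $L$ by a bounded extension, the Galois orbit of $j(E_2)$ is at least a positive power of $\abs{\disc \End(E_2)}$. Composing with the forgetful assignment $\fA_s \mapsto E_2$ (well-defined up to the bounded number of elliptic quotients of $\fA_s$ of bounded degree, by Masser's theorem on minimal isogenies out of an abelian surface) yields the CM contribution to $\# \Aut(\bC/L) \cdot s$. An essential technical point here is that the field of definition of $E_2$, and of the isogeny $\fA_s \to E_1 \times E_2$, is bounded in terms of the field of definition of $s$, so that moving $s$ by $\Aut(\bC/L)$ really does force $j(E_2)$ to move.

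The isogeny-degree contribution is the decisive difficulty. The natural route, and the one realised in this paper via \cref{galois-orbits-thm-sch}, is to invoke André's theorem \cite[Ch.~X, 4.2~(i)]{And89} on abelian schemes over a curve with a toric degeneration: from such a degeneration one extracts a polynomial comparison between Faltings heights of fibres and minimal isogeny degrees to split targets, and inverting this comparison (together with Galois equivariance of isogeny data and standard lower bounds on heights of non-CM elliptic curves) produces the desired polynomial lower bound for the orbit of $s$ in terms of the isogeny-degree factor of $\Delta'(s)$. The main obstacle, and the reason the present paper proves the conjecture only under the boundary hypothesis of \cref{galois-orbits-thm-sch}, is that André's theorem requires the existence of a toric fibre in a semiabelian extension of $\fA \to V$; without this, one is thrown back on ineffective Masser--W\"ustholz-type isogeny bounds, which do not in their current form yield the polynomial dependence on $\Delta'(s)$ demanded by \cref{galois-orbits-conj-sch}.
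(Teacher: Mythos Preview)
The statement is a \emph{conjecture}; the paper does not prove it unconditionally, only under the multiplicative-reduction hypothesis (\cref{galois-orbits-holds-sch}). You clearly recognise this, so what you have written is an outline of the conditional argument rather than a proof of the full conjecture.

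Your bipartite strategy (CM factor plus isogeny degree) matches the paper's, but several details are off. For the CM piece, the paper does not invoke Tsimerman/Ullmo--Yafaev orbit bounds on $\cA_1$; instead it first shows, via the uniqueness of the minimal-degree decomposition (\cref{min-isogeny-unique}), that $E_1$ and $E_2$ are themselves defined over $L(s)$, and then applies Lenstra's effective Brauer--Siegel bound to get $\abs{\disc(\End(E_2))}$ polynomially bounded by $[L(s):L]$. Your appeal to ``Masser's theorem on minimal isogenies'' for the bounded-ambiguity step is not the mechanism used; \cref{min-isogeny-unique} is an elementary lattice argument specific to surfaces. For the isogeny-degree piece, your summary of Andr\'e's input as a ``comparison between Faltings heights and minimal isogeny degrees'' conflates two distinct steps: Andr\'e's theorem (in the form of \cref{andre-height-bound}) bounds the Weil height $h(s)$ of an exceptional point polynomially in $[L(s):L]$; one then passes to $h_F(\fA_s)$ via Faltings, and only \emph{then} does Masser--W\"ustholz bound $N(s)$ polynomially in $\max(h_F(\fA_s),[L(s):L])$. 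The phrase ``standard lower bounds on heights of non-CM elliptic curves'' does not correspond to anything in the actual argument and should be dropped.
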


\subsection{Previous results}

Of course, Unlikely Intersections is now a vast area of research. This article can be viewed as a sequel to two previous works. The first \cite{DR}, due to Ren and the first author, established a general strategy to attack the Zilber--Pink conjecture for general Shimura varieties, depending on certain arithmetic conjectures. The second \cite{OrrUI}, due to the second author, proved both conditional and unconditional results for unlikely intersections with Hecke translates of a fixed special subvariety. Both of these works were inspired by the earlier works \cite{HP12} and \cite{HP16} of Habegger and Pila on the Zilber--Pink conjecture in a product of modular curves. In this article, we implement the stategy of \cite{DR} for so-called \( E \, \times \! \) CM curves in $\mathcal{A}_2$ and, more generally, for so-called Hecke--facteur families, which are a natural generalisation of the objects studied in \cite{OrrUI}.

Considering special points instead of intersections with special curves, the Andr\'e--Oort conjecture for $\mathcal{A}_2$ was proved by Pila and Tsimerman \cite{PT13}, also using the Pila--Zannier strategy.
This strategy eventually led to their proof of the Andr\'e--Oort conjecture for $\mathcal{A}_g$ \cite{PT:Ag}, \cite{Tsim:AO}.

The key step in proving \cref{main-thm} consists in controlling the heights of algebraic points in definable sets which parametrise intersections with special subvarieties.
Analogous bounds for pre-special points of $\Ag$ appeared in \cite{PT13}, and were generalised by the authors of the current paper to arbitrary Shimura varieties \cite{DO16}.
The second author has proved bounds of a similar nature concerning Hecke operators \cite{Orr18}.
We use both of these previous bounds in this paper.

\subsection{Generalisations}

We prove a generalisation of \cref{main-thm} using the notion of Hecke--facteur families in a general Shimura variety, which we define in section~\ref{sec:hecke-facteur}.
These families are a natural generalisation of \( E \, \times \! \) CM curves in \( \cA_2 \).

For an example of Hecke--facteur families, fix positive integers \( d, e, g \) satisfying \( d+e=g \).
Consider the locus of points in \( \Ag \) parametrising principally polarised abelian varieties which are isogenous (via a polarised isogeny) to a product \( A_1 \times A_2 \), where \( \dim(A_1)=d \), \( \dim(A_2)=e \) and \( A_2 \) has CM type.
This locus is the union of a Hecke--facteur family of special subvarieties of \( \Ag \), as described in section~\ref{sec:hf-examples}.
The example includes the case of \( E \, \times \! \) CM curves by choosing \( d=e=1 \).
(Note that the definition of \( E \, \times \! \) CM curves does not require the isogeny to be polarised.
\Cref{excm-polarised} shows that this does not matter in the \( E \, \times \! \) CM case.)

Another example of a Hecke--facteur family is given by the Hecke translates of a fixed special subvariety
(this is obtained by setting \( \gH_2 = \{ 1 \} \) in the definition of Hecke--facteur family).
This case of \cref{main-thm-gen} was essentially already established in \cite[Theorem~3.4]{OrrUI}.
We are able to remove the dependence on \cite[Conjecture~3.3]{OrrUI} thanks to \cref{galoisonhecke}.

\Cref{main-thm-gen} is conditional on a Galois orbits conjecture, \cref{galois-orbits-conj-gen}.
We do not prove any cases of \cref{galois-orbits-conj-gen} beyond the \( E \, \times \! \) CM case.

\begin{theorem} \label{main-thm-gen}
Let \( (\gG, X^+) \) be a Shimura datum component and let \( S = \Gamma \bs X^+ \) be an associated Shimura variety component.
Let $V$ be an irreducible algebraic curve in $S$ not contained in any proper special subvariety.

Let \( (\gH, X_\gH^+, \gH_1, \gH_2) \) be a facteur datum in \( (\gG, X^+) \) and let \( \fF \) be the associated Hecke--facteur family of special subvarieties of \( S \).
Suppose that, for every subvariety $Z \in \fF$, we have
\[\dim(Z)\leq\dim(S)-2.\]
Let \( \Sigma \) be the set of points \( s \in S \) for which the smallest special subvariety containing \( s \) is a member of $\fF$.

Assume that \( S\), $\fF$, $V$ and $\Sigma$ satisfy \cref{galois-orbits-conj-gen}.

Then \( V \cap \Sigma \) is finite.
\end{theorem}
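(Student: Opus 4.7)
The plan is to implement the Pila--Zannier strategy for Shimura varieties developed in \cite{DR}, adapted to the Hecke--facteur setting. Let \( \pi \colon X^+ \to S \) denote the uniformization map, and fix a semi-algebraic fundamental set \( \cF \subset X^+ \) for the action of \( \Gamma \). By construction of the Hecke--facteur family, each \( Z \in \fF \) arises from a fixed family of ``facteur'' pre-special subvarieties \( Y \subset X^+ \) (parametrised by \( \gH_1 \) and \( \gH_2 \)) composed with a Hecke correspondence indexed by some \( g \in \gG(\bQ) \). For every \( s \in V \cap \Sigma \), writing \( Z \) for the member of \( \fF \) containing \( s \), one may therefore choose a pre-image \( \tilde s \in \cF \) together with \( g \in \gG(\bQ) \) and \( Y \) in the facteur family so that \( \tilde s \in g \cdot Y \) and \( \pi(g \cdot Y) = Z \), producing a datum that encodes the intersection.

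Combining the height bound for pre-special points of \cite{DO16} with the bound for representatives of Hecke correspondences of \cite{Orr18}, one arranges the height of this datum to be bounded polynomially in the complexity \( \Delta(Z) \). Simultaneously, \cref{galois-orbits-conj-gen} supplies a lower bound \( \#\Aut(\bC/L) \cdot s \geq c_1 \Delta(Z)^{c_2} \) on the number of Galois conjugates of \( s \), each of which contributes its own datum to a suitable definable set \( \cD \) whose points parametrise intersections of \( \Gamma \)-translates of \( \pi^{-1}(V) \cap \cF \) with lifts of members of \( \fF \). Applying the Pila--Wilkie counting theorem to \( \cD \), the resulting profusion of rational points of polynomial height forces, for \( \Delta(Z) \) sufficiently large, the existence of a positive-dimensional semi-algebraic block \( B \subset \cD \) whose associated lifts are contained in \( \pi^{-1}(V) \) and remain compatible with the Hecke--facteur structure.

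The final step invokes the hyperbolic Ax--Lindemann theorem of Klingler--Ullmo--Yafaev: the block \( B \) yields a positive-dimensional semi-algebraic subset of \( \pi^{-1}(V) \), whose Zariski closure must be contained in a pre-special subvariety of \( X^+ \). This pre-special set projects to a proper special subvariety \( S' \) of \( S \) satisfying \( \dim(V \cap S') \geq 1 \). Since \( V \) is not contained in any proper special subvariety, this is a contradiction, and hence the complexities \( \Delta(Z) \) for \( Z \in \fF \) meeting \( V \cap \Sigma \) are uniformly bounded. The codimension hypothesis \( \dim(Z) \leq \dim(S) - 2 \) together with standard finiteness of the family \( \fF \) in bounded complexity then implies that \( V \cap \Sigma \) is finite.

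The principal obstacle will be establishing the polynomial height bound for the datum \( (\tilde s, g, Y) \). When \( \gH_2 = \{ 1 \} \) this reduces to the Hecke translate setting treated in \cite{OrrUI}, for which the bound is essentially that of \cite{Orr18}. With non-trivial \( \gH_2 \) one must simultaneously control a pre-special parameter inside the factor cut out by \( \gH_2 \) and the Hecke element \( g \in \gG(\bQ) \); this will require a careful reduction-theoretic argument in \( \gH(\bR) \), combining and extending \cite{DO16} and \cite{Orr18}.
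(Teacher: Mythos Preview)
Your outline captures the Pila--Zannier strategy correctly and identifies the parameter height bound as the main technical hurdle, matching the paper's approach. However, there is a genuine missing step. You apply the Galois orbit bound to \( s \) to produce many conjugates \( \sigma(s) \), and then assert that ``each of which contributes its own datum'' of polynomially bounded height. But for \( \sigma(s) \) to contribute such a datum, you need the smallest special subvariety containing \( \sigma(s) \), namely \( \sigma(Z) \), to lie in the Hecke--facteur family \( \fF \); otherwise neither the height bound (which is proved only for members of \( \fF \)) nor the complexity \( \Delta(\sigma(Z)) \) makes sense. This is not automatic: it is precisely the content of \cref{galoisonhecke}, which requires enlarging \( L \) to contain the reflex field \( E_\gH \). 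Moreover, the height of the datum attached to \( \sigma(s) \) is bounded in terms of \( \Delta(\sigma(Z)) \), not \( \Delta(Z) \); the paper handles this by applying \cref{galois-orbits-conj-gen} to \( \sigma(s) \in V \cap \Sigma \) (which again needs \( \sigma(Z) \in \fF \)) and using \( \#\Aut(\bC/L)\cdot\sigma(s) = \#\Aut(\bC/L)\cdot s \).

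Your endgame is also too coarse. The block \( B \) lives in the parameter space of tuples \( (g,x,z) \), and it is not clear that its projection to the \( z \)-coordinate is positive-dimensional, so you cannot directly extract a positive-dimensional semi-algebraic subset of \( \pi^{-1}(V) \) and feed it to Ax--Lindemann. The argument in \cite{DR} (Theorem~14.2) that the paper invokes uses the hyperbolic Ax--Schanuel theorem and a more delicate analysis of the block, in which the codimension hypothesis \( \dim(Z) \leq \dim(S)-2 \) enters via a dimension inequality for the resulting weakly special subvariety (see the paper's remarks about \( \dim(Y_1) \leq \dim(X)-2 \)). Finally, the reduction-theoretic argument you anticipate for the height bound does indeed combine \cite{DO16} and \cite{Orr18}, but also requires a third ingredient you have not mentioned: a polynomial height bound for coset representatives of principal congruence subgroups (\cref{lift-bound}), proved via the expander property of the associated Cayley graphs.
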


The following large Galois orbits conjecture gives a lower bound for the Galois degree of a point \(s \in V \cap \Sigma \) with respect to the complexity of the special subvariety of \( \fF \) containing \( s \).
The complexity \( \Delta(Z) \) of a special subvariety in the Hecke--facteur family \( \fF \) is defined in section~\ref{sec:complexity}, depending on the choice of a representation~\( \rho \) (hence why we need to make such a choice in the statement of the conjecture).

\begin{conjecture} \label{galois-orbits-conj-gen}
Let \( (\gG, X^+) \), \( S \), $\fF$, \( V \) and \( \Sigma \) be as in \cref{main-thm-gen}.
Let \( L \) be a finitely generated subfield of \( \bC \) over which \( S \) and \( V \) are defined.
Let \( \rho \colon \gG \to \gGL_{m,\bQ} \) be a faithful representation.

There exist positive constants \( \newC{galois-orbits-gen-mult} \) and \( \newC{galois-orbits-gen-exp} \) such that, for all points \( s \in V \cap \Sigma \), if we let \( Z \) denote the (unique) special subvariety in \( \fF \) containing \( s \), then
\[ \# \Aut(\bC/L) \cdot s  \geq  \refC{galois-orbits-gen-mult} \Delta(Z)^{\refC{galois-orbits-gen-exp}}. \]
\end{conjecture}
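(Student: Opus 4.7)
The plan is to reduce the required lower bound on $\#\Aut(\bC/L)\cdot s$ to two essentially independent Galois-orbit bounds, corresponding to two natural components of the complexity $\Delta(Z)$: a CM component reflecting the arithmetic of the $\gH_2$-factor in the facteur datum, and an isogeny/Hecke component measuring the ``denominator'' of the Hecke operator needed to produce $Z$ from the base subvariety attached to $(\gH,X_\gH^+)$.

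First I would decompose $\Delta(Z)$ (up to bounded factors) as a product $\Delta_{\mathrm{CM}}(Z)\cdot\Delta_{\mathrm{Hec}}(Z)$, where $\Delta_{\mathrm{CM}}(Z)$ is controlled by the absolute discriminant of the CM data cut out on the $\gH_2$-factor by the image of $s$, and $\Delta_{\mathrm{Hec}}(Z)$ is controlled by the adelic distance from $Z$ to the base subvariety. For the CM part I would apply the Tsimerman--Ullmo--Yafaev lower bound for Galois orbits of pre-special points \cite{Tsi12,UY15}, transported into $S$ via the inclusion of the $\gH_2$-factor and now unconditional in its $\Ag$-form thanks to the averaged Colmez conjecture. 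For the Hecke part I would invoke the field-of-definition bounds of \cite{Orr18}, in the refined form provided by the present paper's \cref{galoisonhecke}. Each input separately supplies a lower bound of the shape $\#\Aut(\bC/L)\cdot s \geq c\,\Delta_\bullet(Z)^{c'}$ for suitable constants.

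The central step is to argue that these two contributions combine to give a lower bound in terms of the product $\Delta_{\mathrm{CM}}(Z)\cdot\Delta_{\mathrm{Hec}}(Z)$, and this is also the main obstacle. Since $V$ is Hodge generic, the image of $\Aut(\bC/L)$ in $\gG(\AAA_f)$ is large in a Mumford--Tate sense (via Deligne's theorem on absolute Hodge cycles), so its projections onto the $\gH_2$-factor and onto the Hecke data cannot be simultaneously small; but making this quantitative for a single point $s$ rather than for a generic fibre is delicate. While unconditional bounds are available for the pure CM tower and for pure Hecke translates of a fixed subvariety \cite{Orr18}, a uniform polynomial lower bound on the Galois orbit of a point generated by the combination of a large isogeny and a large CM discriminant is not currently available. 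In the $E\times\mathrm{CM}$ setting of the present paper this is bypassed by the boundary hypothesis on $V$ together with Andr\'e's theorem on monodromy near a totally degenerate fibre; outside of this setting the conjecture would presumably require either an analogous monodromy input or a substantial extension of Masser--W\"ustholz-type isogeny theorems to the Shimura-variety setting, which is why \cref{galois-orbits-conj-gen} is stated only as a conjecture.
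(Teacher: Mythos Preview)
The statement you are asked to address is a \emph{conjecture}, not a theorem: the paper does not prove \cref{galois-orbits-conj-gen} in general, and there is no proof in the paper to compare your attempt against. You correctly recognise this by the end of your proposal, where you conclude that the obstacle to combining the CM and Hecke contributions ``is why \cref{galois-orbits-conj-gen} is stated only as a conjecture.'' So your text is not really a proof attempt but rather an explanation of why the statement is conjectural, which is the appropriate stance.

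A few remarks on the content of your sketch. Your decomposition of $\Delta(Z)$ into a CM part and a Hecke part matches the paper's treatment of the special $E\times\mathrm{CM}$ case, where these are $\abs{\disc(\End(E_2))}$ and $N(Z)$ respectively, and the paper does establish the conjecture in that case under the boundary hypothesis on~$V$ (\cref{galois-orbits-holds-sch}, \cref{go-sch-implies-go}). However, you mischaracterise two of the inputs. The results of \cite{Orr18} are height bounds for elements of arithmetic groups relating points in Siegel sets, not Galois-orbit or field-of-definition bounds; in this paper they feed into the parameter height bounds of Section~\ref{sec:param-height-bounds}, not into the Galois-orbit conjecture. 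Likewise \cref{galoisonhecke} asserts only that $\Gal(\Qbar/F)$ permutes the members of the Hecke--facteur family; it gives no quantitative lower bound on orbit sizes. In the paper's proof of the $E\times\mathrm{CM}$ special case, the bound for the Hecke component $N(Z)$ comes not from either of these but from Andr\'e's G-function method (\cref{andre-height-bound}) combined with the Masser--W\"ustholz isogeny theorem, and even this requires the multiplicative-reduction hypothesis on~$V$. No unconditional Galois-orbit lower bound for the Hecke component is known in the general Shimura-variety setting, which is precisely the gap you identify.
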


\subsection{Strategy}

The strategy to prove \cref{main-thm} is to prove \cite[Conjecture~12.2]{DR} for the pre-images of the points in $\Sigma$ (modified to use a different definition of complexity). This is achieved by proving \cite[Conjecture~12.7]{DR} for these points, which is deduced from a new height bound for representatives in congruence subgroups (\cref{lift-bound}), and combining it with previous results of the authors (\cite[Theorems 1.1 and~4.1]{DO16} and \cite[Theorem~1.1]{Orr18}). \cref{main-thm} then follows by the method of \cite[Theorem~14.2]{DR} (note that \cite[Conjecture~10.3]{DR} follows from \cite[Conjecture~12.2]{DR} by \cite[Lemma~12.5]{DR}).

Strictly speaking, we cannot directly invoke \cite[Theorem~14.2]{DR}; rather, we have to show that its proof applies to the points in $\Sigma$. This requires that the \( E \, \times \! \) CM curves are permuted by a Galois action, which is ensured by \cref{hecke-translate-cpts}.

The above arguments work for any Hecke--facteur family in a Shimura variety, thus proving \cref{main-thm-gen}.
Specialising to the \( E \, \times \! \) CM case,
the proof of \cref{galois-orbits-thm,galois-orbits-thm-sch} is obtained by adapting a height bound of Andr\'e for abelian varieties with large endomorphism rings \cite[Ch.~X, Theorem~1.3]{And89}, combined with the Masser--Wüstholz isogeny theorem.

\subsection{Outline of paper}

Section~\ref{sec:pre} contains various definitions, notation and basic facts relating to Shimura varieties, Hecke--facteur families and Siegel fundamental sets.
Section~\ref{sec:complexity} is largely devoted to comparing the two definitions of complexity used for \cref{galois-orbits-conj,galois-orbits-conj-sch} respectively.

In section~\ref{sec:galois-action}, we prove that a suitable Galois action permutes the \( E \, \times \! \) CM curves, as required to be able to apply the method of \cite[Theorem~14.2]{DR}.
Section~\ref{sec:congruence} proves a height bound for congruence subgroups which is another essential ingredient in the proof of \cref{main-thm}.
We carry out the strategy of \cite{DR} for Hecke--facteur families in Sections \ref{sec:param-height-bounds} and~\ref{sec:main-proof}.
Section \ref{sec:param-height-bounds} establishes height bounds for parameters of special subvarieties (modified versions of \cite[Conjectures 12.2 and~12.7]{DR}) while Section \ref{sec:main-proof} describes how we need to modify the proof of \cite[Theorem~14.2]{DR} to obtain \cref{main-thm,main-thm-gen}.

Finally Sections \ref{sec:andre} and~\ref{sec:galois-orbits} concern Galois orbits bounds for intersections with \( E \, \times \! \) CM curves.
Section~\ref{sec:andre} generalises the height bound of \cite[Ch.~X]{And89} to include abelian surfaces.
Section~\ref{sec:galois-orbits} applies this to prove \cref{galois-orbits-thm,galois-orbits-thm-sch}.

\subsection*{Acknowledgements}

The authors would like to thank Yves Andr\'e, David Holmes, David Masser and Andrei Yafaev for useful discussions during the preparation of this manuscript.
They would also like to thank the anonymous referees for helpful suggestions.
The first author is grateful to the University of Reading for financial support.
The second author thanks the EPSRC, who funded some of his work on this paper via grant EP/M020266/1, and the University of Warwick.

\section{Preliminaries} \label{sec:pre}

\subsection{Shimura varieties}

Let \( \bS \) denote the Deligne torus \( \Res_{\bC/\bR} \bG_m \).
A \defterm{Shimura datum} is a pair \( (\gG, X) \), where \( \gG \) is a connected reductive \( \bQ \)-algebraic group and \( X \) is a \( \gG(\bR) \)-conjugacy class in \( \Hom(\bS, \gG_\bR) \) satisfying \cite[axioms 2.1.1.1--2.1.1.3]{Del79}.
These axioms imply that \( X \) is a finite disjoint union of Hermitian symmetric domains \cite[Corollaire~1.1.17]{Del79}.

Given a Shimura datum \( (\gG, X) \) and a compact open subgroup \( K \subset \gG(\bA_f) \), the resulting \defterm{Shimura variety} is denoted \( \Sh_K(\gG, X) \).
This is a quasi-projective algebraic variety whose complex points are given by
\[ \Sh_K(\gG, X)(\bC) = \gG(\bQ) \bs X \times \gG(\bA_f) / K. \]
We can attach to each Shimura datum a number field called the \defterm{reflex field}, denoted \( E(\gG, X) \).
According to Deligne's theory, \( \Sh_K(\gG, X) \) has a so-called \defterm{canonical model} over \( E(\gG, X) \) (see \cite{Del79}, completed in \cite{MS82b}, \cite{Mil83} and \cite{Bor84}).

Given a morphism of Shimura data $(\gH,X_\gH)\rightarrow (\gG,X)$,
induced from a morphism $f:\gH\rightarrow\gG$ of $\QQ$-algebraic groups, and compact open subgroups $K_{\gH}\subset\gH(\AAA_f)$ and $K\subset\gG(\AAA_f)$ such that $f(K_\gH)\subset K$, we obtain a morphism of Shimura varieties
\[\Sh_{K_\gH}(\gH,X_\gH)\rightarrow\Sh_K(\gG,X),\]
which is closed and defined over the compositum of the reflex fields $E_\gH:=E(\gH,X_\gH)$ and $E_\gG:=E(\gG,X_\gG)$ \cite[Remark~13.8]{Mil05}. 
If $f$ is an inclusion, then $E_\gG\subset E_\gH$. 

Let \( \gH^\ad \) denote the quotient of \( \gH \) by its centre, and let \( f \colon \gH \to \gH^\ad \) be the natural morphism.
Then we obtain a Shimura datum \( (\gH^\ad, X^\ad_\gH) \) where $X^\ad_\gH$ is the $\gH^\ad(\RR)$-conjugacy class of morphisms containing the image of $X_\gH$ under composition with $f_\RR$.
The reflex fields are related by \( E(\gH^\ad, X^\ad_\gH) \subset E_\gH\).

A key example of a Shimura variety is \( \Ag \), the coarse moduli space of principally polarised abelian varieties of dimension~\( g \).
This Shimura variety is geometrically irreducible and defined over~\( \bQ \).
It is equal to \( \Sh_K(\gG, X) \) where \( \gG \) is the general symplectic group \( \gGSp_{2g} \), \( X \)~is isomorphic to the disjoint union of two copies of the Siegel upper half-space \( \Hg \) and \( K = \gGSp_{2g}(\hat\bZ) \).

For any algebraically closed field~\( k \) and any point \( s \in \Ag(k) \), we shall write \( A_s \) for the abelian variety parametrised by~\( s \) (which is defined up to \( k \)-isomorphism).

\subsection{Shimura variety components}

Over \( \bC \), a Shimura variety usually has many irreducible components.
It will often be convenient for us to work with a single geometrically irreducible component of a Shimura variety, which we call a \defterm{Shimura variety component}.

We define a \defterm{Shimura datum component} to be a pair \( (\gG, X^+) \), where \( (\gG, X) \) is a Shimura datum and \( X^+ \) is a connected component of \( X \).
Let \( \gG(\bQ)_+ \) denote the stabiliser of \( X^+ \) in \( \gG(\bQ) \).
The image of \( X^+ \) in \( \Sh_K(\gG, X) \) is a Shimura variety component, which we will denote by \( \Sh_K(\gG, X^+) \).
The complex points of \( \Sh_K(\gG, X^+) \) are isomorphic (as a complex analytic space) to \( \Gamma \bs X^+ \) where \( \Gamma = K \cap \gG(\bQ)_+ \) (subgroups of \( \gG(\bQ)_+ \) of this form are called \defterm{congruence subgroups}).

We say that \( (\gH, X_\gH^+) \) is a \defterm{Shimura subdatum component} of \( (\gG, X^+) \) if \( (\gG, X^+) \) and \( (\gH, X_\gH^+) \) are Shimura datum components, \( \gH \) is an algebraic subgroup of \( \gG \) and the inclusion \( \gH \hookrightarrow \gG \) induces an inclusion \( X_\gH^+ \hookrightarrow X^+ \).

Let \( (\gH, X^+_\gH) \) be a Shimura subdatum component of~\( (\gG, X^+) \).
Let \( \Gamma \subset \gG(\bQ)_+ \) be a congruence subgroup and let \( \Gamma_\gH = \Gamma \cap \gH(\bQ)_+ \).
Then \( X_\gH^+ \hookrightarrow X^+ \) induces a closed morphism \( \Gamma_\gH \bs X_\gH^+ \to \Gamma \bs X^+ \) between Shimura variety components.
We call the image of \( X_\gH^+ \to X^+ \) a \defterm{pre-special subvariety} of \( X^+ \) and the image of \( \Gamma_\gH \bs X_\gH^+ \to \Gamma \bs X^+ \) a \defterm{special subvariety} of \( \Gamma \bs X^+ \).

\subsection{Hecke correspondences}\label{sec:hecke-cor}

Let \( (\gG, X) \) be a Shimura datum and let \( K \subset \gG(\bA_f) \) be a compact open subgroup.
For each \( g \in \gG(\bA_f) \), there is an algebraic correspondence on \( \Sh_K(\gG, X) \) given by the morphisms
\[\Sh_K(\gG,X)\leftarrow\Sh_{K\cap gKg^{-1}}(\gG,X)\rightarrow\Sh_{g^{-1}Kg\cap K}(\gG,X)\rightarrow\Sh_K(\gG,X),\]
where the middle arrow is induced by the map 
\begin{align*}
(x,a)\rightarrow (x,ag) : X \times \gG(\bA_f) \to X \times \gG(\bA_f),
\end{align*}
and the outside arrows are the natural projections.
We call such a correspondence a \defterm{Hecke correspondence} and denote by $T_g$ the induced map on algebraic cycles.

\subsection{Hecke--facteur families} \label{sec:hecke-facteur}

Let \( (\gG, X^+) \) be a Shimura datum component and let \( (\gH, X_\gH^+) \subset (\gG, X^+) \) be a Shimura subdatum component.

Suppose that \( \gH \) has semisimple normal subgroups \( \gH_1 \) and \( \gH_2 \) such that \( \gH_1 \cap \gH_2 \) is finite and \( \gH^\der = \gH_1.\gH_2 \).
The groups \( \gH_1 \) and \( \gH_2 \) are not necessarily associated with Shimura data, but their adjoint groups give rise to Shimura datum components \( (\gH_1^\ad, X_1^+) \) and \( (\gH_2^\ad, X_2^+) \) such that there is a natural isomorphism \( X_\gH^+ \cong X_1^+ \times X_2^+ \).
We henceforth regard this isomorphism as an \emph{identification} and write \( X_1^+ \times X_2^+ = X_\gH^+ \subset X^+ \) without extra notation.
(In the language of \cite[Definition~4.4]{Mil05}, \( (\gH_1, X_1^+) \) and \( (\gH_2, X_2^+) \) are connected Shimura data.)

We call \( (\gH, X_\gH^+, \gH_1, \gH_2) \) as above a \defterm{facteur datum} in \( (\gG, X^+) \).

Let \( (\gH, X_\gH) \) denote the Shimura datum such that \( X_\gH^+ \) is a component of \( X_\gH \).
The decomposition \( X_\gH^+ \cong X_1^+ \times X_2^+ \) extends to a decomposition of Shimura data \( (\gH^\ad, X_\gH^\ad) \cong (\gH_1, X_1) \times (\gH_2, X_2) \).
The reflex field \( E(\gH^\ad, X_\gH^\ad) \) is equal to the compositum of \( E(\gH_1, X_1) \) and \( E(\gH_2, X_2) \).
If $K^\ad_\gH\subset\gH^\ad(\AAA_f)$ is a compact open subgroup equal to a product of compact open subgroups $K_1\subset\gH_1(\AAA_f)$ and $K_2\subset\gH_2(\AAA_f)$, then we obtain a decomposition of Shimura varieties
\[\Sh_{K^\ad_\gH}(\gH^\ad,X^\ad_\gH)=\Sh_{K_1}(\gH^\ad_1,X_1)\times\Sh_{K_2}(\gH^\ad_2,X_2),\]
which is defined over \( E(\gH^\ad, X_\gH^\ad) \).

For any pre-special point \( x_2 \in X_2^+ \), \( X_1^+ \times \{ x_2 \} \) is a pre-special subvariety of~\( X^+ \).
We call the collection of subvarieties of this form (for a fixed facteur datum) a \defterm{facteur family of pre-special subvarieties} of \( X^+ \).
We say that a facteur family is \defterm{trivial} if it comes from a facteur datum with \( \gH_2 = \{ 1 \} \): in this case, the facteur family consists simply of~\( X_\gH^+ \) itself.
This terminology is motivated by the notion of ``non-facteur'' special subvarieties from \cite{Ull07}:
a special subvariety is ``non-facteur'' if and only if it does not belong to any non-trivial facteur family.

The main topic of this paper will be the family of Hecke translates of a given facteur family.
For any \( g \in \gG(\bQ)_+ \) and any pre-special point \( x_2 \in X_2^+ \), 
\[ Y_{g,x_2} = g(X_1^+ \times \{ x_2 \}) \]
is again a pre-special subvariety of \( X^+ \).
We call the collection of subvarieties \( Y_{g,x_2} \) a \defterm{Hecke--facteur family of pre-special subvarieties} of \( X^+ \).
We call their images \( Z_{g,x_2} = \pi(Y_{g,x_2}) \), where $\pi$ denotes the uniformisation map $X^+\to \Gamma \bs X^+$, a  \defterm{Hecke--facteur family of special subvarieties} of \( \Gamma \bs X^+ \).
We chose this name because \( Z_{g,x_2} \) is an irreducible component of the image of \( \pi(X_1^+ \times \{ x_2 \}) \) (a special subvariety in the facteur family) under the Hecke correspondence \( T_{g^{-1}} \).

The Mumford--Tate group \( \MT(Y_{g,x_2})\) of a very general point in \( Y_{g,x_2} \) is equal to \( g(\gH_1.\gT_{x_2})g^{-1} \), where \( \gT_{x_2} \) is a torus in \( Z(\gH).\gH_2 \) corresponding to the pre-special point \( x_2 \in X_2^+ \).
Observe that
\( \MT(Y_{g,x_2})^\der = g \gH_1 g^{-1} \).

\subsection{Example of a Hecke--facteur family} \label{sec:hf-examples}

For a fundamental example of a Hecke--facteur family, choose positive integers \( d,e,g \) such that \( d+e = g \).
Let \( (\gG, X^+) = (\gGSp_{2g}, \Hg) \) and let \( X_\gH^+ \subset X^+ \) be a pre-special subvariety parametrising principally polarised abelian varieties which are isomorphic (as polarised abelian varieties) to a product \( A \times B \), where \( \dim(A) = d \), \( \dim(B) = e \) and \( A \), \( B \) are principally polarised.
The generic Mumford--Tate group of \( X_\gH^+ \) is
\[ \gH = \bG_m.(\gSp_{2d} \times \gSp_{2e}). \]
Let \( \gH_1 = \gSp_{2d} \subset \gH \) and \( \gH_2 = \gSp_{2e} \subset \gH \).
Then \( (\gH, X_\gH^+, \gH_1, \gH_2) \) is a facteur datum in \( (\gG, X^+) \), and $X^+_\gH=X^+_1\times X^+_2$, where $X^+_1=\mathcal{H}_d$ and $X^+_2=\mathcal{H}_e$.
The resulting Hecke--facteur family consists of the special subvarieties which parametrise principally polarised abelian varieties \( A \) of dimension~\( g \) such that there exist principally polarised abelian varieties \( A_1 \) (of dimension~\( d \)) and \( A_2 \) (of dimension~\( e \) and of CM type) and a polarised isogeny \( A_1 \times A_2 \to A \). (Given polarised abelian varieties \( (A, \lambda) \) and \( (B, \mu) \), a \defterm{polarised isogeny} is an isogeny \( f \colon A \to B \) such that \( f^* \mu = n\lambda \) for some \( n \in \bZ \).)

In this paper, we are principally interested in the case \( g=2 \), \( d=e=1 \) of this construction.
The following lemma shows that, in this case, the special curves in the Hecke--facteur family contain every point of \( \cA_2 \) for which the corresponding abelian variety is isogenous (not just \emph{polarised} isogenous) to a product \( E_1 \times E_2 \) where \( E_2 \) has CM.
In other words, this Hecke--facteur family consists precisely of the \( E \, \times \! \) CM curves defined in the introduction.

\begin{lemma} \label{excm-polarised}
Let \( (A, \lambda) \) be a principally polarised abelian surface for which there exist elliptic curves \( E_1 \) and \( E_2 \) (not in the same isogeny class) and an isogeny \( \varphi \colon E_1 \times E_2 \to A \).

Then there exist elliptic curves \( E_1' \) and \( E_2' \), such that \( E_i' \) is isogenous to \( E_i \), and a polarised isogeny \( \varphi' :E_1' \times E_2' \to (A, \lambda) \) (where \( E_1' \), \( E_2' \) are equipped with their principal polarisations).

Furthermore, \( \deg \varphi' \leq \deg \varphi \).
\end{lemma}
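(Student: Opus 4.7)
The plan is to write $\varphi^*\lambda$ in a normal form, analyse $\ker \varphi$, and then take the quotient of $E_1 \times E_2$ by a suitable product subgroup contained in $\ker \varphi$ to produce the desired polarised isogeny $\varphi'$. The starting point is that since $E_1$ and $E_2$ are non-isogenous, $\Hom(E_1, E_2) = 0$, so every polarisation on $E_1 \times E_2$ has block-diagonal form $a_1 \lambda_1 \boxplus a_2 \lambda_2$ with $a_i \in \bZ_{>0}$, where $\lambda_i$ denotes the principal polarisation on $E_i$. Applying this to $\varphi^*\lambda$, I would write $\varphi^*\lambda = n_1 \lambda_1 \boxplus n_2 \lambda_2$; comparing degrees yields $\deg \varphi = n_1 n_2$.

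The key technical input concerns $K = \ker \varphi$, which sits inside $\ker(\varphi^*\lambda) = E_1[n_1] \times E_2[n_2]$ as a maximal isotropic subgroup for the Weil pairing on that kernel, which is itself the orthogonal sum of the Weil pairings on the two factors. Writing $K_i = K \cap E_i$ for the intersection with the two axes, the standard Lagrangian-in-a-direct-sum argument (projection of $K$ to $E_j[n_j]$ lands in $K_j^\perp$, and equality of cardinalities forces the inclusion to be an equality) gives $|K_1|/|K_2| = n_1/n_2$. With $g = \gcd(n_1, n_2)$, this ratio together with the integrality of $|K_i|$ forces the divisibility $(n_i/g) \mid |K_i|$, so one can choose subgroups $H_i \subseteq K_i$ of order exactly $n_i/g$.

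Setting $E_i' = E_i/H_i$, the inclusion $H_1 \times H_2 \subseteq K$ lets $\varphi$ factor as $\varphi' \circ (q_1 \times q_2)$ for a unique isogeny $\varphi' \colon E_1' \times E_2' \to A$, where $q_i \colon E_i \to E_i'$ is the quotient map. Writing $\varphi'^*\lambda = b_1 \lambda_{E_1'} \boxplus b_2 \lambda_{E_2'}$ and using that $q_i^*\lambda_{E_i'} = (n_i/g) \lambda_i$ (an isogeny of degree $d$ between elliptic curves pulls back the principal polarisation to $d$ times the principal polarisation), the identity $(q_1 \times q_2)^*\varphi'^*\lambda = \varphi^*\lambda$ forces $b_1 = b_2 = g$. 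Hence $\varphi'^*\lambda = g(\lambda_{E_1'} \boxplus \lambda_{E_2'})$, so $\varphi'$ is a polarised isogeny, and the degree computation $\deg \varphi' = \deg \varphi / ((n_1/g)(n_2/g)) = g^2 \leq n_1 n_2 = \deg \varphi$ gives the required inequality. The main obstacle is the divisibility $(n_i/g) \mid |K_i|$, which is the only step that genuinely uses the Lagrangian structure of $\ker \varphi$; everything else is bookkeeping with pullbacks of polarisations.
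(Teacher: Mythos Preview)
Your argument is correct, and it takes a genuinely different route from the paper's proof. The paper works on the homology side: it sets $\Lambda = H_1(A,\bZ)$, saturates the images of $H_1(E_i,\bZ)$ in $\Lambda$ to obtain primitive sublattices $\Lambda_i$, and then invokes a separate lemma (\cref{symplectic-orthog-disc}) asserting that for two primitive, mutually orthogonal sublattices whose sum has finite index in a unimodular symplectic lattice, the discriminants of the restricted forms coincide. This forces the two polarisation types $n_1,n_2$ (computed \emph{after} saturation) to be equal, so the resulting $\varphi'$ is automatically polarised.

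You instead stay on the abelian-variety side: you read off the type $(n_1,n_2)$ of $\varphi^*\lambda$ directly, analyse $K=\ker\varphi$ as a Lagrangian in the orthogonal sum $E_1[n_1]\times E_2[n_2]$, extract the ratio $|K_1|/|K_2|=n_1/n_2$ from the two inclusions $\pi_j(K)\subseteq K_j^\perp$, and then quotient each $E_i$ by a subgroup of order $n_i/g$ inside $K_i$ to equalise the types at $g=\gcd(n_1,n_2)$. Your approach is self-contained (no auxiliary discriminant lemma) and arguably more elementary. The paper's saturation produces a canonical $\varphi'$ of degree $(n_1/|K_1|)^2$, which can be strictly smaller than your $g^2$, but both satisfy the stated bound $\deg\varphi'\le\deg\varphi$, which is all that is required. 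One small point worth making explicit in a write-up: the existence of a subgroup $H_i\subseteq K_i$ of the prescribed order $n_i/g$ uses that finite \emph{abelian} groups have subgroups of every order dividing the group order.
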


\begin{proof}
Let \( \Lambda = H_1(A, \bZ) \), equipped with the symplectic pairing \( \psi \) induced by the polarisation~\( \lambda \).
The isogeny \( \varphi \colon E_1 \times E_2 \to A \) induces an injection of \( \bZ \)-modules
\[ H_1(E_1, \bZ) \oplus H_1(E_2, \bZ) \to \Lambda \]
with finite cokernel.

Let \( \Lambda_i = (H_1(E_i, \bZ) \otimes_\bZ \bQ) \cap \Lambda \).
This is the \( \bZ \)-Hodge structure of an elliptic curve \( E_i' \) isogenous to \( E_i \).
The inclusion \( \Lambda_1 \oplus \Lambda_2 \to \Lambda \) induces an isogeny \( \varphi' \colon E_1' \times E_2' \to A \).

Let \( \lambda_i' \) be the (unique) principal polarisation on \( E_i' \) and let \( \psi_i' \) be the associated symplectic form on \( \Lambda_i \).
Because \( \rk \Lambda_i = 2 \), every symplectic form on \( \Lambda_i \) is an integer multiple of \( \psi_i' \).
(This is the step in the proof which is restricted to a product of elliptic curves.)
In particular \( \psi_{|\Lambda_i} = n_i \psi_i' \) for some \( n_i \in \bZ \).
Because \( \psi \) and \( \psi_i' \) can both be interpreted as the imaginary part of a positive definite Hermitian form (because they come from polarisations), we must have \( n_i > 0 \).

Because \( E_1 \) and \( E_2 \) are non-isogenous, there is no non-zero morphism of \( \bZ \)-Hodge structures \( \Lambda_1 \to \Lambda_2 \) or \( \Lambda_2 \to \Lambda_1 \).
Therefore \( \Lambda_1 \) and \( \Lambda_2 \) are orthogonal with respect to \( \psi \).
It follows that \( \psi = n_1 \psi'_1 + n_2 \psi'_2 \) and so \( \varphi'^* \lambda = (n_1 \lambda'_1, n_2 \lambda'_2) \).

By construction \( \Lambda_1 \) and \( \Lambda_2 \) are primitive submodules of \( \Lambda \).
Since \( \disc(n_i \psi_i') = n_i^2 \), \cref{symplectic-orthog-disc} implies that \( n_1 = n_2 \) so \( \varphi'^* \lambda \) is an integer multiple of \( (\lambda_1', \lambda_2') \), that is, \( \varphi' \) is a polarised isogeny.

Furthermore,
\[ H_1(E_1, \bZ) \oplus H_1(E_2, \bZ) \subset \Lambda_1 \oplus \Lambda_2 \subset \Lambda. \]
It follows that \( \varphi \) factors through \( \varphi' \) and so \( \deg \varphi' \leq \deg \varphi \).
\end{proof}

In the proof of \cref{excm-polarised}, we needed the following lemma.
This is a symplectic version of \cite[Chapter~14, Proposition~0.2]{Huy16}, which is the analogous result for symmetric bilinear forms.

\begin{lemma} \label{symplectic-orthog-disc}
Let \( \Lambda \) be a free \( \bZ \)-module of finite rank with a perfect symplectic form \( \psi \colon \Lambda \times \Lambda \to \bZ \).
Let \( \Lambda_1, \Lambda_2 \) be primitive submodules of \( \Lambda \) which are orthogonal to each other and such that \( \Lambda_1 \oplus \Lambda_2 \) has finite index in \( \Lambda \).
Then
\[ \disc(\psi_{|\Lambda_1}) = \disc(\psi_{|\Lambda_2}) = [\Lambda : \Lambda_1 + \Lambda_2]. \]
\end{lemma}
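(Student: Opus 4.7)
My plan is to exploit the perfect pairing to identify the discriminant of $\psi|_{\Lambda_1}$ with an index inside $\Lambda$.

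First I would show that $\Lambda_2 = \Lambda_1^\perp$ (and symmetrically $\Lambda_1 = \Lambda_2^\perp$). The orthogonality assumption gives $\Lambda_2 \subset \Lambda_1^\perp$. Since $\Lambda_1 \oplus \Lambda_2$ has finite index in $\Lambda$, we have $\rk \Lambda_1 + \rk \Lambda_2 = \rk \Lambda$. Because $\psi$ is perfect, the induced map $\Lambda \to \Lambda_1^\vee$ by $x \mapsto \psi(x, -)|_{\Lambda_1}$ is surjective with kernel $\Lambda_1^\perp$, hence $\rk \Lambda_1^\perp = \rk \Lambda - \rk \Lambda_1 = \rk \Lambda_2$. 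Since $\Lambda_1^\perp$ is the kernel of a map to a torsion-free module, it is primitive; as $\Lambda_2$ is also primitive of the same rank and contained in $\Lambda_1^\perp$, they must coincide.

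Next I would package this into an exact sequence. The surjectivity of $\Lambda^\vee \twoheadrightarrow \Lambda_1^\vee$ (which holds because $\Lambda/\Lambda_1$ is torsion-free by primitivity) composed with the isomorphism $\Lambda \xrightarrow{\sim} \Lambda^\vee$ coming from $\psi$ yields a short exact sequence
\[ 0 \to \Lambda_2 \to \Lambda \to \Lambda_1^\vee \to 0, \]
so that $\Lambda/\Lambda_2 \cong \Lambda_1^\vee$ canonically. Under this identification, the map $\Lambda_1 \to \Lambda_1^\vee$ defining $\psi|_{\Lambda_1}$ is precisely the composition $\Lambda_1 \hookrightarrow \Lambda \twoheadrightarrow \Lambda/\Lambda_2$. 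Its cokernel is therefore $\Lambda/(\Lambda_1 + \Lambda_2)$, whose order equals both $\disc(\psi|_{\Lambda_1})$ and $[\Lambda : \Lambda_1 + \Lambda_2]$. Swapping the roles of $\Lambda_1$ and $\Lambda_2$ gives the third equality.

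There is no real obstacle here beyond being careful with the compatibility between the perfect pairing on $\Lambda$ and the dual of the inclusion $\Lambda_1 \hookrightarrow \Lambda$; the entire argument is formal once one recognises that primitivity of $\Lambda_1$ ensures the restriction $\Lambda^\vee \to \Lambda_1^\vee$ is surjective, which in turn translates the discriminant computation into the index computation.
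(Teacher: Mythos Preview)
Your proof is correct and follows essentially the same approach as the paper: both use the map \( \Lambda \to \Lambda_1^\vee \) induced by \( \psi \), establish its surjectivity from primitivity of \( \Lambda_1 \) and perfectness of \( \psi \), and identify the cokernel of \( \Lambda_1 \to \Lambda_1^\vee \) with \( \Lambda/(\Lambda_1 + \Lambda_2) \). Your preliminary identification \( \Lambda_2 = \Lambda_1^\perp \) and the exact-sequence packaging make the argument slightly cleaner than the paper's more explicit computation with the projection \( \alpha_i \), but the underlying idea is the same.
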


\begin{proof}
Let
\[ \Lambda_i^\vee = \{ v \in \Lambda_i \otimes \bQ : \psi(v, \Lambda_i) \subset \bZ \}. \]
The map \( v \mapsto \psi_{|\Lambda_i}(v, -) \) is an isomorphism \( \Lambda_i^\vee \to \Hom(\Lambda_i, \bZ) \).
Consequently, there is a unique \( \bZ \)-module homomorphism \( \alpha_i \colon \Lambda \to \Lambda_i^\vee \) such that
\[ \psi_{|\Lambda_i}(v, -) = \psi_{|\Lambda_i}(\alpha_i(v), -) \]
for all \( v \in \Lambda \).

We shall show that \( \alpha_i \) induces an isomorphism \( \Lambda / (\Lambda_1 + \Lambda_2) \to \Lambda_i^\vee / \Lambda_i \) for each~\( i \).
This will suffice to prove the lemma because \( \disc(\psi_{|\Lambda_i}) = [\Lambda_i^\vee : \Lambda_i] \).

Firstly, \( \alpha_i \colon \Lambda \to \Lambda_i^\vee \) is surjective because \( \Lambda_i \) is a primitive submodule of~\( \Lambda \)
 and \( \psi \) is a perfect pairing on \( \Lambda \).

For every \( v \in \Lambda \), \( v - \alpha_1(v) \) is orthogonal to \( \Lambda_1 \).
Thus \( v - \alpha_1(v) \in \Lambda_2 \otimes \bQ \).

If \( \alpha_1(v) \in \Lambda_1 \), then \( v - \alpha_1(v) \in \Lambda \).
Since \( v - \alpha_1(v) \in \Lambda_2 \otimes \bQ \) and \( \Lambda_2 \) is a primitive submodule of \( \Lambda \), we deduce that \( v - \alpha_1(v) \in \Lambda_2 \) and hence \( v \in \Lambda_1 + \Lambda_2 \).
Conversely, if \( v \in \Lambda_1 + \Lambda_2 \), then writing \( v = v_1 + v_2 \) with \( v_i \in \Lambda_i \), we get \( \alpha_1(v) = v_1 \in \Lambda_1 \).

Thus \( \alpha_1(v) \in \Lambda_1 \) if and only if \( v \in \Lambda_1 + \Lambda_2 \).
In other words, \( \alpha_1 \) induces an injection \( \Lambda / (\Lambda_1 + \Lambda_2) \to \Lambda_1^\vee / \Lambda_1 \).
A similar argument applies to \( \alpha_2 \).
\end{proof}

\subsection{Special subvarieties of \texorpdfstring{\( \cA_2 \)}{A2}} \label{sec:a2-subvars}

The main theorems of this paper concern special subvarieties of \( \cA_2 \).
As a consequence of the classification of Hodge groups of abelian surfaces in \cite{MZ99}, every special subvariety of \( \cA_2 \) is an irreducible component of the locus of principally polarised abelian surfaces whose endomorphism algebras contain a fixed algebra (``a Shimura variety of PEL type'').

\begin{table}[t]
\caption{Special subvarieties of \( \cA_2 \)} \label{tab:a2-special}
\begin{tabular}{lcp{15.5em}}
   Description
 & Dimension
 & Generic endomorphism algebra
\\[0.4em] \hline \rule{0pt}{1.2em}%
   \( \cA_2 \)
 & 3
 & \( \bQ \)
\\[0.4em]
   Hilbert modular surface
 & 2
 & real quadratic field
\\[0.4em]
   Hecke translate of \( \cA_1 \times \cA_1 \)
 & 2
 & \( \bQ \times \bQ \)
\\[0.4em]
   Shimura curve
 & 1
 & indefinite quaternion algebra over~\( \bQ \)
\\[0.4em]
   modular curve
 & 1
 & \( \rM_2(\bQ) \)
\\[0.4em]
   \( E \, \times \! \) CM curve
 & 1
 & \( \bQ \times F \), where \( F \) is an imaginary quadratic field
\\[0.4em]
   simple CM point
 & 0
 & quartic CM field
\\[0.4em]
   non-simple CM point
 & 0
 & product of two distinct imaginary quadratic fields
\\[0.4em]
   isotypic CM point
 & 0
 & \( \rM_2(F) \), where \( F \) is an imaginary quadratic field
\\[0.4em] \hline
 &&
\end{tabular}
\end{table}

In \cref{tab:a2-special}, we list the classes of special subvarieties of \( \cA_2 \), giving the dimension of each special subvariety and the generic endomorphism algebra of the abelian surfaces parametrised by that special subvariety.

Unlikely intersections with special points are taken care of by the André--Oort conjecture (proved for \( \cA_2 \) by Pila and Tsimerman \cite{PT13}).
Therefore, in order to prove the Zilber--Pink conjecture for \( \cA_2 \), it remains only to consider intersections between special curves and a general curve.
In this paper, we consider intersections with the \( E \, \times \! \) CM special curves. The outstanding special curves will be treated in a forthcoming article by the same authors \cite{DO20}.

\subsection{Fundamental sets}

Let \( (\gG, X^+) \) be a Shimura datum component and let \( (\gH, X_\gH^+, \gH_1, \gH_2) \) be a facteur datum.
Let \( (\gH_1^\ad, X_1^+) \) and \( (\gH_2^\ad, X_2^+) \) denote the Shimura datum components such that \( X_\gH^+ \cong X_1^+ \times X_2^+ \).
We now describe how to choose compatible fundamental sets in \( X_1^+ \), \( X_2^+ \) and \( X^+ \).

\begin{definition}
A \defterm{Siegel fundamental set} in \( X^+ \) for a congruence subgroup \( \Gamma \subset \gG(\bQ)_+ \) is a fundamental set for \( \Gamma \) of the form \( C.\fS^+.x_0 \), where
\( C \subset \gG(\bQ)_+ \) is a finite set, \( \fS^+ = \fS \cap \gG(\bR)^+ \) for some Siegel set \( \fS \subset \gG(\bR) \) and \( x_0 \in X^+ \) is a point such that the stabiliser of \( x_0 \) in \( \gG(\bR) \) right-stabilises \( \fS \).
We use the definitions of Siegel sets and associated terminology from \cite[section~2B]{Orr18}.
\end{definition}

\begin{lemma} \label{fundamental-sets}
Fix a congruence subgroup \( \Gamma \subset \gG(\bQ)_+ \) and let \( \Gamma_1 = \Gamma \cap \gH_1(\bQ)_+ \), \( \Gamma_2 = \Gamma \cap \gH_2(\bQ)_+ \).
Let \( \cF_1 \subset X_1^+ \) and \( \cF_2 \subset X_2^+ \) be Siegel fundamental sets for the congruence subgroups \( \Gamma_1 \) and \( \Gamma_2 \), respectively.

Then there exists a Siegel fundamental set \( \cF \subset X^+ \) for \( \Gamma \) such that \( \cF_1 \times \cF_2 \subset \cF \).
\end{lemma}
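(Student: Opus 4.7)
The plan is to build $\cF = C \cdot \fS^+ \cdot x_0$ from the pieces of $\cF_1$ and $\cF_2$. Take $x_0 \in X^+$ to be the point in $X_1^+ \times X_2^+ \subset X_\gH^+$ corresponding to $(x_{1,0}, x_{2,0})$. The key structural observation is that $\gH_1$ and $\gH_2$ commute: as connected normal subgroups of $\gH$ with finite intersection, the commutator $[\gH_1, \gH_2]$ is a connected subgroup of the finite group $\gH_1 \cap \gH_2$ and hence trivial. Writing $\cF_i = C_i \cdot \fS_i^+ \cdot x_{i,0}$, this commutativity gives
\[ (c_1 s_1 x_{1,0}, c_2 s_2 x_{2,0}) = (c_1 c_2)(s_1 s_2) \cdot x_0 \]
in $X^+$, so the inclusion $\cF_1 \times \cF_2 \subset \cF$ will be automatic provided $C$ contains $C_1 C_2$ and $\fS$ contains $\fS_1^+ \cdot \fS_2^+$.

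The non-trivial task is to exhibit a Siegel set $\fS \subset \gG(\bR)$ with base point $x_0$ that contains $\fS_1^+ \cdot \fS_2^+$. I would construct $\fS$ by making compatible choices of reduction-theoretic data at all three levels $\gH_1, \gH_2 \subset \gH \subset \gG$: take maximal $\bQ$-split tori $\gS_i \subset \gH_i$ underlying $\fS_i$, assemble them into a maximal $\bQ$-split torus $\gS_\gH \subset \gH$ containing $\gS_1 \gS_2$, and extend to a maximal $\bQ$-split torus $\gS \subset \gG$; similarly take minimal $\bQ$-parabolic subgroups $\gP_i \subset \gH_i$ with unipotent radicals $\gU_i$, and extend to a minimal $\bQ$-parabolic $\gP \subset \gG$ whose unipotent radical $\gU$ contains $\gU_1 \cdot \gU_2$. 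Decomposing $\fS_i = K_i \cdot A_{i,t} \cdot \omega_i$ in the standard way and exploiting commutativity of $\gH_1(\bR)$ and $\gH_2(\bR)$, one rearranges
\[ \fS_1^+ \cdot \fS_2^+ \subset (K_1 K_2) \cdot (A_{1,t} A_{2,t}) \cdot (\omega_1 \omega_2), \]
and chooses a maximal compact $K \subset \gG(\bR)$ containing $K_1 K_2$ and equal to $\Stab_{\gG(\bR)}(x_0)$ (unique by commutativity), a threshold $t'$ with $A_{1,t} A_{2,t} \subset A_{t'}$, and a compact $\omega \subset \gU(\bR)$ containing $\omega_1 \omega_2$. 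The set $\fS := K \cdot A_{t'} \cdot \omega$ is then a Siegel set of $\gG(\bR)$ of the required form. Finally, enlarge $C_1 C_2$ to a finite subset $C \subset \gG(\bQ)_+$ large enough that $C \cdot \fS^+ \cdot x_0$ is fundamental for $\Gamma$, as guaranteed by classical Borel--Harish-Chandra reduction theory.

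The principal obstacle is the careful triple-layer bookkeeping: one must select the tori, minimal parabolics, and maximal compacts of $\gH_1$ and $\gH_2$ so that they combine, using commutativity, into admissible reduction-theoretic data first for $\gH$ and then for $\gG$, in such a way that the joint maximal compact really is $\Stab_{\gG(\bR)}(x_0)$. Once this is done, the verification that the resulting $\fS$ is right-stabilised by $\Stab_{\gG(\bR)}(x_0)$ reduces to the standard left-$K$-invariance of Siegel sets, and all remaining properties follow from routine manipulations with Siegel sets and their behaviour under inclusion of algebraic subgroups.
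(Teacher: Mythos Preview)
Your first step---combining $\fS_1$ and $\fS_2$ into a Siegel set for $\gH^\der(\bR)$ via the commutativity of $\gH_1$ and $\gH_2$---is essentially correct and matches what the paper does in its \cref{siegel-set-product}. The gap is in the passage from $\gH^\der$ to $\gG$. You assert that one can choose a single Siegel set $\fS \subset \gG(\bR)$ containing $\fS_1^+ \cdot \fS_2^+$, by extending the split torus and minimal parabolic from $\gH$ to $\gG$ and then taking ``a threshold $t'$ with $A_{1,t} A_{2,t} \subset A_{t'}$'' and ``a compact $\omega \subset \gU(\bR)$ containing $\omega_1 \omega_2$''. Neither inclusion is available in general: the simple roots of $\gG$ with respect to $\gS$ need not restrict to non-negative combinations of simple roots of $\gH$ on $\gS_\gH$, so elements of $A_{1,t} A_{2,t}$ can violate the inequalities defining $A_{t'}$ for every $t'$; and the unipotent radical of a minimal $\bQ$-parabolic of $\gH$ need not sit inside that of any minimal $\bQ$-parabolic of $\gG$.

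This is precisely why the paper invokes \cite[Theorem~4.1]{Orr18}, a non-trivial result stating that a Siegel set in $\gH(\bR)$ is contained in $C \cdot \fS$ for some \emph{finite set} $C \subset \gG(\bQ)$ and Siegel set $\fS \subset \gG(\bR)$---not in a single Siegel set. The extra rational translates are then absorbed into the finite set $C$ appearing in the definition of the Siegel fundamental set $\cF = C_+ \cdot \fS^+ \cdot x_0$. Your outline would go through if you replace the direct construction of $\fS$ by an appeal to this theorem (and check, as the paper does, that the maximal compact in the resulting Siegel triple for $\gG$ can be taken to be $\Stab_{\gG(\bR)}(x_0)$).
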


The proof of \cref{fundamental-sets} relies on the following lemma.

\begin{lemma} \label{siegel-set-product}
Let \( \fS_1 \subset \gH_1(\bR) \) and \( \fS_2 \subset \gH_2(\bR) \) be Siegel sets.
Then \( \fS_1.\fS_2 \) is a Siegel set in \( \gH^\der(\bR) \).
\end{lemma}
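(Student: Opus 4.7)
The plan is to exploit the fact that, as subgroups of $\gH^\der(\bR)$, the groups $\gH_1(\bR)$ and $\gH_2(\bR)$ commute, so that a product of Siegel sets can be rearranged into the standard shape of a Siegel set with respect to a canonically constructed Siegel triple in $\gH^\der$.

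First I would check the commutativity. Since $\gH_1$ and $\gH_2$ are both normal in $\gH$, for any $h_1 \in \gH_1$ and $h_2 \in \gH_2$ the commutator $[h_1,h_2]$ lies in $\gH_1 \cap \gH_2$. This intersection is finite by hypothesis, so the morphism of $\bQ$-varieties $\gH_1 \times \gH_2 \to \gH_1 \cap \gH_2$ sending $(h_1,h_2)$ to $[h_1,h_2]$ is constant on the connected component of the identity; sending $(1,1)$ to $1$ forces it to be trivial there. In particular $\gH_1(\bR)^+$ and $\gH_2(\bR)^+$ commute elementwise, which is enough for what follows since Siegel sets live in the identity component.

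Next I would assemble a Siegel triple for $\gH^\der$ from Siegel triples for $\gH_1$ and $\gH_2$. Choose maximal $\bQ$-split tori $\gT_i \subset \gH_i$, minimal $\bQ$-parabolics $\gP_i \supset \gT_i$, and maximal compact subgroups $K_i \subset \gH_i(\bR)$ compatible with the Cartan involutions associated with $\gT_i$. Using commutativity together with the isogeny $\gH_1 \times \gH_2 \to \gH^\der$, the products $\gT := \gT_1 \cdot \gT_2$, $\gP := \gP_1 \cdot \gP_2$ and $K := K_1 \cdot K_2$ form a maximal $\bQ$-split torus, a minimal $\bQ$-parabolic and a maximal compact subgroup in $\gH^\der$, with $\gT \subset \gP$ and $K$ stable under the corresponding Cartan involution. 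Crucially, the root system of $(\gH^\der, \gT)$ is the disjoint union of those of $(\gH_i, \gT_i)$, with each root of $\gH_i$ trivial on $\gT_{3-i}$.

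With notation as in \cite[Section~2B]{Orr18}, write $\fS_i = K_i \cdot A_{i,t_i} \cdot \omega_i$, where $A_{i,t_i}$ is the shifted positive cone in $\gT_i(\bR)^+$ cut out by the simple roots of $(\gH_i, \gT_i)$, and $\omega_i$ is a compact subset of the product of the Levi complement and the unipotent radical of $\gP_i(\bR)$. Commutativity lets me rewrite
\[ \fS_1 \cdot \fS_2 \;=\; (K_1 K_2) \cdot (A_{1,t_1} \cdot A_{2,t_2}) \cdot (\omega_1 \omega_2). \]
The first factor lies in $K$; the second is precisely the shifted cone in $\gT(\bR)^+$ attached to the simple roots of $(\gH^\der,\gT)$ with threshold $\min(t_1,t_2)$, by the root-system decomposition above; and the third is a compact subset of the Levi-times-unipotent part of $\gP(\bR)$. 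Hence $\fS_1 \cdot \fS_2$ is contained in (and in fact equals) a Siegel set in $\gH^\der(\bR)$ attached to $(\gP,\gT,K)$.

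The main obstacle is the bookkeeping in the third paragraph: verifying that the cone and compact conditions really do combine correctly. This rests entirely on the root-system decomposition together with the commutativity established at the outset, so once those two facts are in hand the rest is a direct translation of definitions. No deep input is needed; the content of the lemma is essentially that Siegel sets behave well under almost-direct products of semisimple groups.
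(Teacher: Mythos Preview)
Your approach is essentially identical to the paper's: construct a Siegel triple for $\gH^\der$ as the product of Siegel triples for $\gH_1$ and $\gH_2$, then use commutativity to rearrange $\fS_1 \cdot \fS_2$ into Siegel-set shape. One small discrepancy: in \cite[Section~2B]{Orr18} a Siegel set is written $\Omega \cdot A_t \cdot K$ (compact subset of $\gU(\bR)\gM(\bR)^+$ on the left, maximal compact on the right), not $K \cdot A_t \cdot \omega$ as you have it, so your factor order should be reversed to match the cited convention --- the argument is otherwise unchanged.
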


\begin{proof}
We use the notation from \cite[section~2B]{Orr18}, adding subscripts \( 1 \), \( 2 \) or \( \gH^\der \) as appropriate. For example, \( (\gP_1, \gS_1, K_1) \) denotes the Siegel triple associated with the Siegel set \( \fS_1 \).

We begin by constructing a Siegel triple for \( \gH^\der \).
Multiplication in \( \gH \) is a central \( \bQ \)-isogeny \( \gH_1 \times \gH_2 \to \gH^\der \), so \( \gP_{\gH^\der} = \gP_1.\gP_2 \) is a minimal parabolic \( \bQ \)-subgroup of \( \gH^\der \).
Similarly \( K_{\gH^\der} = K_1.K_2 \) is a maximal compact subgroup of \( \gH^\der(\bR) \).

Let \( \gS_{\gH^\der} = \gS_1.\gS_2 \).
This is an \( \bR \)-torus in \( \gP_{\gH^\der} \).
Since \( \gS_i \) is \( \gP_i(\bR) \)-conjugate to a maximal \( \bQ \)-split torus in \( \gH_i \), we can use the central \( \bQ \)-isogeny to deduce that \( \gS_{\gH^\der} \) is \( \gP_{\gH^\der}(\bR) \)-conjugate to a maximal \( \bQ \)-split torus in \( \gP_{\gH^\der} \). Finally \( \gS_{\gH^\der} \) is stabilised by the Cartan involution of \( \gH^\der \) associated with \( K_{\gH^\der} \), because this Cartan involution restricts to the Cartan involutions of \( \gH_1 \) and \( \gH_2 \) associated with \( K_1 \) and \( K_2 \) respectively.
Thus \( (\gP_{\gH^\der}, \gS_{\gH^\der}, K_{\gH^\der})\) is a Siegel triple for \( \gH^\der \).

The unipotent radical of \( \gP_{\gH^\der} \) is \( \gU_{\gH^\der} = \gU_1.\gU_2 \).
The isogeny \( \gH_1 \times \gH_2 \to \gH^\der \) induces central isogenies \( Z_{\gH_1}(\gS_1) \times Z_{\gH_2}(\gS_2) \to Z_{\gH^\der}(\gS_{\gH^\der}) \) and \( \gP_1/\gU_1 \times \gP_2/\gU_2 \to \gP_{\gH^\der}/\gU_{\gH^\der} \), with the latter being defined over \( \bQ \).
Hence the maximal \( \bQ \)-anisotropic subgroup of \( \gP_{\gH^\der}/\gU_{\gH^\der} \) is the product of the corresponding subgroups in \( \gP_1/\gU_1 \) and \( \gP_2/\gU_2 \).
Lifting to \( Z_{\gH^\der}(\gS_{\gH^\der}) \), it follows that \( \gM_{\gH^\der} = \gM_1.\gM_2 \).

The set of simple roots of \( \gH^\der \) (with respect to \( (\gP_{\gH^\der}, \gS_{\gH^\der}) \)) is the union of the sets of simple roots of \( \gH_1 \) and \( \gH_2 \).
It follows that \( A_{\gH^\der,t} = A_{1,t}.A_{2,t} \).

We have \( \fS_i = \Omega_i.A_{i,t}.K_i \) for \( i = 1 \) or \( 2 \), where \( \Omega_i \) is a compact subset of \( \gU_i(\bR).\gM_i(\bR)^+ \).
Since \( \gM_1 \) commutes with \( \gU_2 \), \( \Omega_1.\Omega_2 \) is a compact subset of \( \gU_{\gH^\der}(\bR).\gM_{\gH^\der}(\bR)^+ \).
Hence
\[ \fS_{\gH^\der} = \Omega_1.\Omega_2.A_{1,t}.A_{2,t}.K_1.K_2 \]
is a Siegel set in \( \gH^\der(\bR) \).
Because \( \gH_1 \) commutes with \( \gH_2 \), we conclude that \( \fS_{\gH^\der} = \fS_1.\fS_2 \).
\end{proof}

\begin{proof}[Proof of \cref{fundamental-sets}]
Write \( \cF_i = C_i.\fS_i^+.x_i \) as in the definition of Siegel fundamental sets.

By \cref{siegel-set-product}, \( \fS_1.\fS_2 \) is a Siegel set in \( \gH^\der(\bR) \).
By \cite[Theorem~4.1]{Orr18}, there exists a Siegel set \( \fS \subset \gG(\bR) \) and a finite set \( C \subset \gG(\bQ) \) such that \( \fS_1.\fS_2 \subset C.\fS \).
Let \( C_+ = C \cap \gG(\bQ)_+ \) and \( \fS^+ = \fS \cap \gG(\bR)^+ \).
Enlarging \( C \) and \( \fS \) if necessary, we may ensure that \( C_+.\fS^+ \) is a fundamental set for \( \Gamma \) in \( \gG(\bR)_+ \).

Let \( x_0 \) be the image of \( (x_1, x_2) \) under the inclusion \( X_1^+ \times X_2^+ \to X^+ \).
Let \( K_i = \Stab_{\gH_i(\bR)}(x_i) \).
Examining the proof of \cite[Theorem~4.1]{Orr18} and in particular \cite[Lemma~4.4]{Orr18}, we see that the maximal compact subgroup \( K_\gG \) in the Siegel triple used to construct \( \fS \) can be chosen to be any maximal compact subgroup of \( \gG(\bR) \) satisfying two conditions: \( K_\gG \) contains \( K_1.K_2 \) and the Cartan involution of \( \gG \) associated with \( K_\gG \) stabilises \( \gH^\der \).
In fact, if we let \( K_\gG \) be the maximal compact subgroup of \( \gG(\bR) \) which stabilises \( x_0 \), then it satisfies these conditions.
Indeed, \( K_1.K_2 \subset K_\gG \) since \( K_1.K_2 = \Stab_{\gH^\der(\bR)}(x_0) \).
The Cartan involution of~\( \gG \) associated with \( K_\gG \) is conjugation by \( x_0(i) \).
Since \( x_0(i) \in \gH(\bR) \), this Cartan involution stabilises \( \gH^\der(\bR) \) as required.

It follows that \( \cF = C_+.\fS^+.x_0 \) is a fundamental set for \( \Gamma \) in \( X^+ \) which contains \( \fS_1^+.x_1 \times \fS_2^+.x_2 \).
Replacing \( C \) by \( C_1.C_2.C \), we get \( \cF_1 \times \cF_2 \subset \cF \).
\end{proof}

\subsection{Heights and determinants}\label{sec:heights}


Let $k\geq 1$ be an integer. For any real number~$y$, we define its \defterm{$k$-height} as
\begin{align*}
{\rm H}_k(y):=\min\{\max\{|a_0|,...,|a_k|\}:a_i \in \ZZ,\ {\rm gcd}\{a_0,...,a_k\}=1,\ a_ky^k+...+a_0=0\},
\end{align*}
where we use the convention that, if the set is empty, that is, $y$ is not algebraic of degree less than or equal to~$k$, then ${\rm H}_k(y)$ is $+\infty$. For $y=(y_1,...,y_m)\in \RR^m$, we set
\begin{align*}
{\rm H}_k(y):=\max\{{\rm H}_k(y_1),...,{\rm H}_k(y_m)\}.
\end{align*}
We extend this definition to $\CC^m$ by identifying it with $\RR^{2m}$, taking real and imaginary parts.
The $1$-height of a matrix $g\in\rM_n(\bQ)$ is the height of $g$ considered as an element of $\QQ^{n^2}$. 
For any matrix \( g \in \rM_n(\bQ) \), we write
\[ \detstar(g) = \abs{\det(g)} \cdot (\lcm \{ b_{ij} : 1 \leq i,j \leq n \})^n \in \bZ, \]
where the entries of \( g \) written in lowest terms are \( a_{ij}/b_{ij} \).

\section{Complexities} \label{sec:complexity}

We define a notion of complexity \( \Delta(Z) \) for special subvarieties in a Hecke--facteur family which is similar to the general definition of complexity of special subvarieties from \cite{DR}, but modified to be more convenient for the case of Hecke--facteur families.
We will then define a second notion of complexity \( \Delta'(Z) \) (which is even more specialised to the \( E \, \times \! \) CM case) and show that \( \Delta \) and \( \Delta' \) are polynomially bounded in terms of each other.

Let \( (\gG, X^+) \) be a Shimura datum component, let \( K \subset \gG(\bA_f) \) be a compact open subgroup and let \( S  \) be the Shimura variety component \( \Sh_K(\gG, X^+) \).
Choose a faithful representation \( \gG \to \gGL_{m,\bQ} \) (we shall use this representation to talk about the height or \( \detstar \) of elements of \( \gG(\bQ) \)).

For each special point \( s \in S \), one can define the following objects and quantities associated with \( s \) (as in \cite[Definition~10.1]{DR}):
\begin{enumerate}
\item \( \gT \subset \gG \) is the Mumford--Tate group of \( s \).
\item \( K_\gT^m \) is the maximal compact open subgroup of \( \gT(\bA_f) \). (There is a unique maximal compact open subgroup because $\gT$ is a torus.)
\item \( D_\gT \) is the absolute value of the discriminant of the splitting field of \( \gT \).
\item \( \Delta(s) = \max\{D_\gT, \, [K_\gT^m : K \cap \gT(\bA_f)]\} \).
\end{enumerate}
The \( \bQ \)-torus \( \gT \) is defined up to conjugation by \( K \cap \gG(\bQ)_+ \),
and \( D_\gT \) and \( \Delta(s) \) are independent of the choice of \( \gT \) in its conjugacy class.

Let \( (\gH, X_\gH^+, \gH_1, \gH_2) \) be a facteur datum for \( (\gG, X^+) \).
For any special subvariety \( Z \subset S \) in the associated Hecke--facteur family, we define the following quantities:
\begin{enumerate}
\item \( N(Z) \) is the smallest positive integer~\( N \) such that there exist \( \gamma \in \gG(\bQ)_+ \) and \( x_2 \in X_2^+ \) with \( Z = Z_{\gamma,x_2} \) and \( \detstar(\gamma) = N \).
\item \( \Delta(Z) = \max\{N(Z), \, \min \{ \Delta(s) : s \in Z \text{ is a special point} \}\} \).
\end{enumerate}
We call \( \Delta(Z) \) the \defterm{complexity} of \( Z \).

In the examples from section~\ref{sec:hf-examples}, the following lemma shows that the quantity \( N(Z) \) can be interpreted as the smallest positive integer \( N \) such that, for every point \( s \in Z \), there exist abelian varieties \( A_1 \) and \( A_2 \) (of dimensions \( d \) and \( e \), respectively) and a polarised isogeny \( A_1 \times A_2 \to A_s \) of degree~\( N \). 

\begin{lemma} \label{detstar-isog-comparison}
Let \( d \), \( e \) and \( g \) be positive integers such that \( d+e=g \) and let \( (\gH, X_\gH^+, \gH_1, \gH_2) \) be the facteur datum described in section~\ref{sec:hf-examples}.
Let \( Z \) be a special subvariety of \( \Ag \) in the Hecke--facteur family associated with \( (\gH, X_\gH^+, \gH_1, \gH_2) \).

For each positive integer~\( N \), the following statements are equivalent:
\begin{enumerate}[(i)]
\item There exists \( \gamma \in \gGSp_{2g}(\bQ)_+ \) and a pre-special point \( x_2 \in X_2^+ \) such that \( Z = Z_{\gamma,x_2} \) and \( \detstar(\gamma) = N \)
(with respect to the inclusion \( \gGSp_{2g} \hookrightarrow \gGL_{2g} \)).
\item There exists a pre-special point \( x_2 \in X_2^+ \) such that, for every point \( s \in Z \), there exists an abelian variety \( A_1 \) of dimension~\( d \) and a polarised isogeny \( A_1 \times A_{x_2} \to A_s \) of degree~\( N \), where $A_{x_2}$ denotes the abelian variety of dimension $e$ naturally associated with $x_2\in X^+_2=\mathcal{H}_e$.
\item There exists a pre-special point \( x_2 \in X_2^+ \), a point $s \in Z$ such that $Z$ is the smallest special subvariety containing $s$, an abelian variety \( A_1 \) of dimension~\( d \) and a polarised isogeny \( A_1 \times A_{x_2} \to A_s \) of degree~\( N \).
\end{enumerate}
\end{lemma}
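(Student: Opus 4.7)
The plan is to prove the cyclic chain (i) $\Rightarrow$ (ii) $\Rightarrow$ (iii) $\Rightarrow$ (i). The implication (ii) $\Rightarrow$ (iii) is essentially free: since the proper special subvarieties of $Z$ form a countable collection and $Z$ is irreducible, $Z$ contains at least one point whose smallest special subvariety is $Z$ itself, and (ii) evaluated at such a point yields (iii).

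For (i) $\Rightarrow$ (ii), I would work throughout with the standard symplectic $\bQ$-vector space $(V, \psi)$ of dimension $2g$ and its integral lattice $\Lambda$, together with the orthogonal decomposition $V = V_1 \oplus V_2$ furnished by the facteur datum (so that $\Lambda = \Lambda_1 \oplus \Lambda_2$ with $\Lambda_i \subset V_i$). Given $Z = Z_{\gamma, x_2}$ with $\detstar(\gamma) = N$ and an arbitrary $s \in Z$, fix a lift $y = \gamma(x_1, x_2) \in X^+$. The abelian variety $A_{x_1} \times A_{x_2}$ is then $V(\bR)/\Lambda$ equipped with the product Hodge structure $h_0$ of $(x_1, x_2)$, while $A_s$ admits the equivalent description $V(\bR)/\gamma^{-1}\Lambda$ with the same Hodge structure $h_0$ (the $\gamma$-action having been absorbed into the lattice). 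Multiplication by $n$, where $n$ is the LCM of the denominators of the entries of $\gamma$, sends $\Lambda$ into $\gamma^{-1}\Lambda$ and descends to an isogeny $A_{x_1} \times A_{x_2} \to A_s$ of degree $[\gamma^{-1}\Lambda : n\Lambda] = [\Lambda : n\gamma\Lambda] = n^{2g} \abs{\det \gamma} = \detstar(\gamma) = N$. This isogeny is polarised because $\gamma \in \gGSp_{2g}(\bQ)_+$ scales $\psi$ by a positive rational multiplier.

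For (iii) $\Rightarrow$ (i), I would invert this dictionary. A polarised isogeny $\varphi \colon A_1 \times A_{x_2} \to A_s$ of degree $N$ induces on $H_1(-, \bQ)$ a $\bQ$-linear isomorphism of symplectic spaces scaling $\psi$ by a positive rational (the multiplier of the polarised isogeny), and thus an element $\gamma \in \gGSp_{2g}(\bQ)_+$ transporting the Hodge structure of $A_1 \times A_{x_2}$ at some $(x_1, x_2) \in X_1^+ \times X_2^+$ to the Hodge structure of $A_s$ at a chosen lift $y \in X^+$. Hence $s \in Z_{\gamma, x_2}$; combined with the hypothesis that $Z$ is the smallest special subvariety containing $s$, and with the fact that $\dim Z = \dim Z_{\gamma, x_2} = d$ (both are members of the Hecke--facteur family), this forces $Z = Z_{\gamma, x_2}$. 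The lattice-index calculation of the previous paragraph, performed in reverse, identifies $\detstar(\gamma) = \deg \varphi = N$.

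The principal obstacle is the bookkeeping that matches $\detstar(\gamma)$ with $\deg \varphi$ when $\gamma$ is not an integer matrix; this hinges on the fact that $\detstar$ is defined precisely to count the lattice index $[\Lambda : n\gamma\Lambda]$ for the LCM-of-denominators $n$. A secondary technical issue in (iii) $\Rightarrow$ (i) is that the constructed $\gamma$ depends on choices (lift of $s$ to $X^+$; choice of principal polarisation on $A_1$; choice of $x_1$), which alter $\gamma$ by left multiplication by elements of the congruence subgroup $\Gamma = \gGSp_{2g}(\hat\bZ) \cap \gGSp_{2g}(\bQ)_+$ and by right multiplication by $\gH_1(\bQ)_+ \cdot \Stab_{\gH_2(\bQ)_+}(x_2)$; none of these affects $\detstar(\gamma)$ or the class $Z_{\gamma, x_2}$.
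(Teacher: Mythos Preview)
Your proof is correct. The implications (i) $\Rightarrow$ (ii) and (ii) $\Rightarrow$ (iii) match the paper's argument essentially verbatim: the paper also clears denominators (equivalently, multiplies by the lcm $n$) to realise $\gamma$ as the rational representation of a polarised isogeny of degree $\det(\gamma) = N$, and dismisses (ii) $\Rightarrow$ (iii) as obvious.

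For (iii) $\Rightarrow$ (i) you take a different and somewhat more direct route. You build $\gamma$ \emph{pointwise} from the given isogeny $\varphi$, by reading off $\gamma = \varphi_*$ on rational $H_1$ and then invoking minimality of $Z$ plus the dimension equality to force $Z = Z_{\gamma, x_2}$. The paper instead works globally with the Hecke correspondence: it introduces $T_N \subset \Ag \times \Ag$ (pairs related by a degree-$N$ polarised isogeny), pulls back to $X_N \subset \Hg \times \Hg$, intersects with $(X_1^+ \times \{x_2\}) \times \Hg$, and observes that each irreducible component of the resulting $W_N$ has the form $W_\gamma$ for some integral $\gamma$ with $\det(\gamma) = N$; minimality of $Z$ and a dimension count then pin down the component. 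Your argument is more hands-on and avoids the auxiliary varieties $T_N, X_N, W_N$; the paper's argument is a bit more structural and makes the Hecke-correspondence interpretation explicit, but both reach the same $\gamma$ (integral, with $\detstar(\gamma) = \abs{\det\gamma} = N$).

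One small slip: the common dimension of $Z$ and $Z_{\gamma,x_2}$ is $\dim X_1^+ = \dim \cH_d = d(d+1)/2$, not $d$; this does not affect the argument, since you only use that the two dimensions agree. Also, as in the paper's proof, your (iii) $\Rightarrow$ (i) implicitly assumes $A_1$ is principally polarised (needed to identify $H_1(A_1,\bZ)$ symplectically with $\Lambda_1$); this is harmless since (ii) already furnishes such an $A_1$.
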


\begin{proof}
First suppose that we are given \( \gamma \) and $x_2$ as in (i).
We can multiply \( \gamma \) by the lowest common multiple of the denominators of its entries without changing \( \detstar(\gamma) \), so we may assume that \( \gamma \in \gGSp_{2g}(\bQ)_+ \cap \rM_{2g}(\bZ) \).
By the definition of \( Z_{\gamma,x_2} \), for each point \( s \in Z \), we have \( s = \pi(\gamma.(x_1, x_2)) \) for some point \( x_1 \in X_1^+ \).
Then the matrix~\( \gamma \) is the rational representation of a polarised isogeny \( A_{x_1} \times A_{x_2} \to A_s \) of degree \( \det(\gamma) = N \), where $A_{x_1}$ denotes the abelian variety of dimension $d$ naturally associated with $x_1\in X^+_1=\mathcal{H}_d$. This yields (ii). 

It is obvious that (ii) implies (iii).

Finally, suppose that (iii) holds.
Let \( p_1, p_2 \) denote the first and second projections \( \Hg \times \Hg \to \Hg \) and
let \( \pi \) denote the uniformising map \( \Hg \to \Ag \).
Let
\[ T_N = \{ (s_1, s_2) \in \Ag \times \Ag : \exists \text{ a polarised isogeny } A_{s_1} \to A_{s_2} \text{ of degree } N \} \]
and let \( X_N = (\pi\times\pi)^{-1}(T_N) \).
Let 
\[ W_N = \bigl( (X_1^+ \times \{x_2\}) \times \Hg \bigr) \cap X_N. \]
Each irreducible component of \( X_N \) is of the form \( X_\gamma = \{ (x, \gamma x) : x \in \Hg \} \) for some \( \gamma \in \gGSp_{2g}(\bQ)_+ \cap \rM_{2g}(\bZ) \) such that \( \det(\gamma) = N \).
Hence, each irreducible component of \( W_N \) has the form
\[ W_\gamma = \{ ((x_1, x_2), \gamma(x_1, x_2)) : x_1 \in \cH_d \} \] for some \( \gamma \) of the same type.

By (iii), \( s \in (\pi \circ p_2)(W_N) \).
Each irreducible component of \( (\pi \circ p_2)(W_N) \) is a special subvariety of \( \cA_g \).
Since \( Z \) is the smallest special subvariety containing~\( s \), we deduce that \( Z \subset (\pi \circ p_2)(W_N) \).
Since \( Z \) is irreducible, it is contained in \( (\pi \circ p_2)(W_\gamma) \) for some~\( \gamma \).
Because \( Z \) is in the Hecke--facteur family, \( \dim(Z) = \dim(W_\gamma) \).
Therefore, \( Z = (\pi \circ p_2)(W_\gamma) = Z_{\gamma,x_2} \).
\end{proof}

Now suppose that \( (\gH, X_\gH^+, \gH_1, \gH_2) \) is the facteur datum discussed in section~\ref{sec:hf-examples} with $g=2$ and $d=e=1$, that is, the datum associated with \( E \, \times \! \) CM curves.
In this case, we can relate the above definition of complexity to a simpler definition, which is crucial for the proof of \cref{galois-orbits-thm-sch}.
The new definition of complexity relies on the following lemma.

\begin{lemma} \label{min-isogeny-unique}
Let \( (A, \lambda) \) be a principally polarised abelian surface which is isogenous to a product of elliptic curves \( E_1 \times E_2 \) where \( E_1 \) and \( E_2 \) are not isogenous to each other.
Let \( N \) be the smallest positive integer such that there exist elliptic curves \( E_1 \) and \( E_2 \) and an isogeny \( E_1 \times E_2 \to A \) of degree~\( N \).

Then the pair of elliptic curves \( (E_1, E_2) \) such that there exists an isogeny \( E_1 \times E_2 \to A \) of degree~\( N \) is unique up to swapping \( E_1 \) and \( E_2 \).
\end{lemma}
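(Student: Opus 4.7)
The plan is to use the Hodge-theoretic description of abelian subvarieties, exactly as in the proof of \cref{excm-polarised}, to identify the pair $(E_1, E_2)$ achieving the minimum degree with a canonical lattice decomposition of $H_1(A, \bZ)$.

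First I would set $\Lambda = H_1(A, \bZ)$, a polarised $\bZ$-Hodge structure of weight~$1$. Since $A$ is isogenous to $E_1 \times E_2$ with $E_1 \not\sim E_2$, the rational Hodge structure $\Lambda_\bQ = \Lambda \otimes_\bZ \bQ$ is semisimple with two non-isogenous simple summands of rank~$2$. Consequently its isotypic decomposition $\Lambda_\bQ = V_1 \oplus V_2$ is unique up to swapping the factors; here $V_i$ is the rational Hodge substructure corresponding to $E_i$. Set $\Lambda_i = V_i \cap \Lambda$, which is a primitive sublattice of $\Lambda$ underlying the $\bZ$-Hodge structure of some elliptic curve $E_i^\circ$.

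Next I would analyse an arbitrary isogeny $\varphi \colon F_1 \times F_2 \to A$ of degree $M$ from a product of elliptic curves. The induced injection $H_1(F_1, \bZ) \oplus H_1(F_2, \bZ) \hookrightarrow \Lambda$ is a morphism of $\bZ$-Hodge structures with finite cokernel of order $M$. Each summand $H_1(F_i, \bZ)$ is a rank-$2$ Hodge substructure, hence its image lies in $V_1$ or $V_2$ (and not both, for rank reasons); relabeling if necessary we may assume it lies in $V_i$, and therefore in $\Lambda_i$. This gives a chain
\[ H_1(F_1, \bZ) \oplus H_1(F_2, \bZ) \subset \Lambda_1 \oplus \Lambda_2 \subset \Lambda, \]
so
\[ M \;=\; [\Lambda : H_1(F_1, \bZ) \oplus H_1(F_2, \bZ)] \;\geq\; [\Lambda : \Lambda_1 \oplus \Lambda_2], \]
with equality if and only if $H_1(F_i, \bZ) = \Lambda_i$ for $i = 1, 2$, i.e.\ $F_i \isom E_i^\circ$ as elliptic curves. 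Conversely the inclusion $\Lambda_1 \oplus \Lambda_2 \subset \Lambda$ itself realises an isogeny $E_1^\circ \times E_2^\circ \to A$ of degree $[\Lambda : \Lambda_1 \oplus \Lambda_2]$, so this index is precisely the minimum $N$.

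Finally I would conclude uniqueness: any minimising pair $(F_1, F_2)$ must satisfy $H_1(F_i, \bZ) = \Lambda_{\sigma(i)}$ for some permutation $\sigma \in \fS_2$, hence $\{F_1, F_2\} = \{E_1^\circ, E_2^\circ\}$ as isomorphism classes. I do not anticipate a real obstacle here; the only subtle point is the uniqueness of the isotypic decomposition of $\Lambda_\bQ$, which rests solely on $E_1$ and $E_2$ being non-isogenous and on Schur's lemma for the semisimple category of polarisable rational Hodge structures.
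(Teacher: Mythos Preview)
Your argument is correct and takes a genuinely different route from the paper's. The paper fixes two minimal isogenies \( \varphi \colon E_1 \times E_2 \to A \) and \( \varphi' \colon E_1' \times E_2' \to A \), uses the polarisation to show \( N = n^2 \) and \( \ker(\varphi), \ker(\varphi') \subset [n] \)-torsion, then analyses the composite \( \psi' \circ \varphi \colon E_1 \times E_2 \to E_1' \times E_2' \) (where \( \psi' \varphi' = [n] \)) and tracks kernels and projections explicitly until it forces \( \ker(\alpha_i) = E_i[n] \), whence \( E_i' \cong E_i \). You instead identify the canonical minimal pair \emph{a priori} as the primitive saturations \( \Lambda_i = V_i \cap \Lambda \) coming from the unique isotypic decomposition of \( \Lambda_\bQ \), and then observe that every product isogeny factors through \( \Lambda_1 \oplus \Lambda_2 \hookrightarrow \Lambda \). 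Your approach is shorter, uses no information about the polarisation beyond semisimplicity of \( \Lambda_\bQ \), and makes the role of the hypothesis \( E_1 \not\sim E_2 \) transparent (it is exactly what pins down the isotypic pieces). The paper's approach, by contrast, extracts along the way the finer arithmetic fact that \( N \) is a perfect square and that the kernel of the minimal isogeny is \( n \)-torsion, which is not visible in your argument but is also not needed for the lemma as stated.
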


\begin{proof}
Let \( E_1 \) and \( E_2 \) be elliptic curves for which there exists an isogeny \( \varphi \colon E_1 \times E_2 \to A \) of degree~\( N \).
Let \( \Lambda = H_1(A, \bZ) \) and \( \Lambda_i = H_1(E_i, \bZ) \).
Let \( \psi \) be the symplectic form on \( \Lambda \) induced by \( \lambda \) and let \( \psi_i \) be the symplectic form on \( \Lambda_i \) induced by the unique principal polarisation of \( E_i \).

Because \( \deg \varphi \) is as small as possible, we must have \( \varphi' = \varphi \) in \cref{excm-polarised}.
Looking at the proof of \cref{excm-polarised}, we see that there is a positive integer~\( n \) such that \( \psi_{|\Lambda_i} = n\psi_i \).
Then \( N = \deg \varphi = n^2 \).

\medskip

As a first step, we show that \( \ker(\varphi) \subset (E_1 \times E_2)[n] \).

Choose a basis \( \{ x, y \} \) for \( \Lambda_1 \) such that \( \psi_1(x, y) = 1 \).
For any \( v \in \Lambda \), we can write \( v = v_1 + v_2 \) where \( v_i \in \Lambda_i \otimes \bQ \).
We can calculate
\[ \psi(v, x) = \psi(v_1, x) = n\psi_1(v_1, x). \]
Since \( \psi(v, x) \in \bZ \), \( \psi_1(v_1, x) \in \frac{1}{n}\bZ \).
Similarly \( \psi_1(v_1, y) \in \frac{1}{n}\bZ \).
Now
\[ v_1 = \psi_1(v_1,y).x - \psi_1(v_1, x).y \]
and so \( v_1 \in \frac{1}{n}\Lambda_1 \).

A similar argument shows that \( v_2 \in \frac{1}{n}\Lambda_2 \).
Thus \( \Lambda \subset \frac{1}{n} \Lambda_1 + \frac{1}{n} \Lambda_2 \).
In other words, \( n \) annihilates \( \Lambda / (\Lambda_1 + \Lambda_2) \cong \ker(\varphi) \).

\medskip

Suppose that there exists another pair of elliptic curves \( E_1' \), \( E_2' \) and an isogeny \( \varphi' \colon E_1' \times E_2'\to A \) also of degree~\( N \).

We can apply the above argument to \( \varphi' \), deducing that \( \ker(\varphi') \subset (E_1' \times E_2')[n] \).
Consequently, there exists an isogeny \( \psi' \colon A \to E_1' \times E_2' \) such that \( \psi' \circ \varphi' = [n]_{E_1' \times E_2'} \).
We will consider the isogeny \( \psi' \circ \varphi \colon E_1 \times E_2 \to E_1' \times E_2' \).

Label \( E_1' \) and \( E_2' \) so that they are isogenous to \( E_1 \) and \( E_2 \) in that order.
Because \( E_1 \) is not isogenous to \( E_2 \), we have \( \Hom(E_1, E_2') = \Hom(E_2, E_1') = 0 \).
Consequently
\[ \psi' \circ \varphi = (\alpha_1, \alpha_2) \]
for some isogenies \( \alpha_1 \colon E_1 \to E_1' \) and \( \alpha_2 \colon E_2 \to E_2' \).

Let \( G = \ker(\varphi) \subset (E_1 \times E_2)[n] \) and let \( p_i \) denote the projection \( (E_1 \times E_2)[n] \to E_i[n] \) for \( i = 1 \) and \( 2 \).

Observe that \( \varphi \colon E_1 \times E_2 \to A \) factors through \( (E_1 \times E_2) / (G \cap E_2[n]) \) because $G \cap E_2[n]\subset G = \ker(\varphi)$.
Hence the minimality of \( \deg \varphi \) implies that \( G \cap E_2[n] = 0 \).
We conclude that \( p_{1|G} \) is injective.
It follows that \( \# p_1(G) = \# G = n^2 \).
Thus \( p_1(G) = E_1[n] \).
Similarly \( p_2(G) = E_2[n] \).

If \( g = (g_1, g_2) \in G \), then \( (\alpha_1(g_1), \alpha_2(g_2)) = \psi'\varphi(g) = 0 \) and so \( p_i(G) \subset \ker(\alpha_i) \).

Since \( \deg(\varphi') = n^2 \) and \( \deg [n]_{E_1' \times E_2'} = n^4 \), we have \( \deg(\psi') = n^2 \).
Consequently \( \deg(\psi' \circ \varphi) = n^4 \).
Meanwhile \( \ker(\psi' \circ \varphi) = \ker(\alpha_1) \times \ker(\alpha_2) \) so the inclusions \( p_i(G) \subset \ker(\alpha_i) \) must be equalities.

It follows that \( E_i' \cong E_i/E_i[n] \cong E_i \).
\end{proof}

We continue to consider the \( E \, \times \! \) CM Hecke--facteur family, using the standard representation $\gGSp_4\to\gGL_4$ to define \( \detstar \).
Let $Z$ be a special curve in this family. By Lemma \ref{detstar-isog-comparison}, there exists $x_2\in X^+_2$ such that, for every $s\in Z$, there exists an elliptic curve $E_1(s)$ and an isogeny $E_1(s)\times E_{x_2}\rightarrow A_s$ of degree $N(Z)$, where $E_{x_2}$ is the elliptic curve naturally associated with $x_2\in X^+_2=\mathcal{H}_1$. Furthermore, for any point $s\in Z\cap\Sigma$, $N(Z)$ is the smallest degree of an isogeny of $A_s$ to a product of elliptic curves. Therefore, by \cref{min-isogeny-unique}, $E_1(s)$ and $E_{x_2}$ are the unique elliptic curves whose product has an isogeny to $A_s$ of degree $N(Z)$.
Since \( E_{x_2} \) has CM while \( E_1(s) \), \( E_{x_2} \) is uniquely associated with \( Z \).
Hence, it makes sense to define
\[ \Delta'(Z) = \max\{N(Z), \, \abs{\disc(\End(E_{x_2}))} \}. \]

\begin{remark} \label{delta's}
If $s$ is an \( E \, \times \! \) CM point in the base of a principally polarised abelian scheme \( \fA \to V \), as defined in section~\ref{sec:alt-state}, then the point of $\cA_2$ attached to the fiber $\fA_s$ lies in a unique \( E \, \times \! \) CM curve $Z$.
In this situation (as for example in the statement of \cref{galois-orbits-conj-sch}), we write $N(s) = N(Z)$ and $\Delta'(s) = \Delta'(Z)$.
\end{remark}

For a special curve \( Z \) in the \( E \, \times \! \) CM Hecke--facteur family, our two notions of complexity \( \Delta(Z) \) and \( \Delta'(Z) \) are polynomially bounded with respect to each other.
Before proving this, we shall prove two lemmas.
The first is a special case of the reverse of \cite[Lemma~7.2]{Tsi12} or \cite[Theorem~4.1]{DO16}.
(We expect that the reverse of \cite[Theorem~4.1]{DO16} should hold for all special points in an arbitrary Shimura variety, but the proof is likely to be somewhat more complicated.)

\begin{lemma} \label{delta-disc-bound}
There exist absolute constants \( \newC{delta-disc-mult} \) and \( \newC{delta-disc-exp} \) such that for every special point \( s \in \cA_2 \), if there exists a polarised isogeny \( E_1 \times E_2 \to A_s \) where \( E_1 \) and \( E_2 \) are non-isogenous elliptic curves both of which have CM, then
\[ \Delta(s) \leq \refC{delta-disc-mult} \, \abs{\disc(\End(A_s))}^{\refC{delta-disc-exp}}. \]
\end{lemma}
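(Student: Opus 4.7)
The plan is to identify the Mumford--Tate torus $\gT := \MT(A_s)$ explicitly and then bound both ingredients of $\Delta(s)$ --- namely $D_\gT$ and the index $[K_\gT^m : K \cap \gT(\bA_f)]$ --- by a polynomial in $\abs{\disc(\End(A_s))}$. Since $E_1$ and $E_2$ are non-isogenous CM elliptic curves, the fields $F_i := \End(E_i) \otimes \bQ$ are distinct imaginary quadratic fields and $\End(A_s) \otimes \bQ = F_1 \times F_2$. Correspondingly, $\gT$ is the fibre product $\Res_{F_1/\bQ}\bG_m \times_{\bG_m} \Res_{F_2/\bQ}\bG_m$, with structure maps the two norms; this is a $3$-dimensional torus that splits over the biquadratic compositum $L := F_1 F_2$.

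For the first ingredient, I would bound $D_\gT$ by $\abs{\disc(L)}$ and then apply a standard estimate for discriminants of composita to obtain $\abs{\disc(L)} \leq (\abs{\disc F_1} \cdot \abs{\disc F_2})^{O(1)}$. Because $\End(A_s)$ is an order in $F_1 \times F_2$, the product $\abs{\disc F_1} \cdot \abs{\disc F_2} = \abs{\disc(\cO_{F_1} \times \cO_{F_2})}$ divides $\abs{\disc(\End(A_s))}$, so $D_\gT$ is polynomially bounded by $\abs{\disc(\End(A_s))}$.

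For the second ingredient, I would compute the index prime by prime. At each prime $p$, the intersection $K \cap \gT(\bQ_p)$ is the stabiliser in $\gT(\bQ_p)$ of the lattice $H_1(A_s, \bZ_p)$; because the action of $\gT(\bQ_p)$ on $H_1(A_s, \bQ_p)$ factors through $(\End(A_s) \otimes \bQ_p)^\times$, this stabiliser agrees with $(\End(A_s) \otimes \bZ_p)^\times \cap \gT(\bQ_p)$ up to an index bounded uniformly in $p$. Similarly, the maximal compact $K_{\gT,p}^m$ agrees with $((\cO_{F_1} \times \cO_{F_2}) \otimes \bZ_p)^\times \cap \gT(\bQ_p)$ up to a uniformly bounded index. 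Hence the local index at $p$ is controlled by $[(\cO_{F_1} \times \cO_{F_2}) \otimes \bZ_p : \End(A_s) \otimes \bZ_p]$, and taking the product over all primes yields a bound polynomial in $[\cO_{F_1} \times \cO_{F_2} : \End(A_s)]$, and hence in $\abs{\disc(\End(A_s))}$.

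Combining the two estimates gives $\Delta(s) \leq \refC{delta-disc-mult} \abs{\disc(\End(A_s))}^{\refC{delta-disc-exp}}$ for suitable absolute constants. The main technical obstacle lies in the local analysis of the third paragraph --- in particular, verifying that $K_{\gT,p}^m$ coincides with $((\cO_{F_1} \times \cO_{F_2}) \otimes \bZ_p)^\times \cap \gT(\bQ_p)$ up to a bounded index, uniformly in $p$. This requires care at the finitely many primes where $\gT$ ramifies (those dividing $D_\gT$) and where $H_1(A_s, \bZ_p)$ need not be a free module over $\End(A_s) \otimes \bZ_p$; I would expect to handle these cases by the same style of Tate-module analysis used in \cite{DO16} and \cite{Tsi12}.
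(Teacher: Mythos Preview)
Your approach is essentially the same as the paper's, but you are being unnecessarily cautious in the third paragraph, and this caution is where you flag an obstacle that does not actually exist.

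The identifications you want are \emph{exact}, not ``up to bounded index''. Once you fix a $\bZ$-basis of $H_1(A_s,\bZ)$, the ring $R := \End(A_s)$ embeds into $\rM_4(\bZ)$, and the stabiliser of $H_1(A_s,\bZ_p)$ in $(R\otimes\bQ_p)^\times$ is precisely $(R\otimes\bZ_p)^\times$; intersecting with $\gT(\bQ_p)$ gives $K\cap\gT(\bQ_p)$ on the nose. Likewise, since $\gT$ sits inside $\Res_{(F_1\times F_2)/\bQ}\bG_m$ and the unique maximal compact subgroup of $((F_1\times F_2)\otimes\bA_f)^\times$ is $\hat\cO^\times$ (where $\cO=\cO_{F_1}\times\cO_{F_2}$), one has $K_\gT^m = \gT(\bA_f)\cap\hat\cO^\times$ exactly. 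Freeness of $H_1(A_s,\bZ_p)$ over $R\otimes\bZ_p$ plays no role.

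With these equalities in hand, the index bound reduces to showing $[\hat\cO^\times:\hat R^\times]\leq [\cO:R]^4$, which the paper handles by an elementary local argument: if $N_p=[\cO_p:R_p]$ and $p\mid N_p$, then $1+N_p\cO_p\subset R_p^\times$, whence $[\cO_p^\times:R_p^\times]\leq [\cO_p^\times:1+N_p\cO_p]\leq \#(\cO_p/N_p\cO_p)=N_p^4$. This is simpler than invoking the Tate-module analysis of \cite{DO16} or \cite{Tsi12} and requires no separate treatment of ramified primes.
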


\begin{proof}
Let \( F_i = \End(E_i) \otimes \bQ \) for \( i = 1 \) and \( 2 \).
Let \( \cO_i \) denote the ring of integers of \( F_i \) and let \( \cO = \cO_1 \times \cO_2 \), \( \cO_p = \cO \otimes \bZ_p \) and \( \hat\cO = \prod_p \cO_p \).
Let \( R = \End(A_s) \), \( R_p = R \otimes \bZ_p \) and \( \hat{R} = \prod_p R_p \).
Observe that \( R \) is an order in \( F_1 \times F_2 \) so
\[ \disc(R) = \disc(F_1) \disc(F_2) [\cO:R]^2. \]

We use the notation \( \gT \), \( K_\gT^m \) and \( D_\gT \) as in the definition of \( \Delta(s) \) in section~\ref{sec:complexity}.

Fix a basis for \( H_1(A_s, \bZ) \).
With respect to this basis, we get a homomorphism of rings \( \iota \colon R \to \rM_4(\bZ) \)
and an injection \( \gT \to \gGL_{4,\bQ} \).

Because the Mumford--Tate group of an abelian surface is always as large as possible given its endomorphism ring and polarisation (see section~\ref{sec:a2-subvars}), \( \gT \) is equal to the intersection of \( \gGSp_4 \) with the centraliser (in \( \gGL_4 \)) of \( \iota(R) \).
Now \( \iota(R \otimes \bQ) \) is a commutative algebra of dimension~\( 4 \), so it is its own centraliser in \( \rM_4(\bQ) \).
Hence
\[ \gT(\bQ) = \iota((R \otimes \bQ)^\times) \cap \gGSp_4(\bQ). \]
We deduce that \( \gT \) is a subtorus of \( \Res_{F_1/\bQ} \bG_m \times \Res_{F_2/\bQ} \bG_m \) and hence is split over the compositum \( F_1 F_2 \).
Consequently
\[ D_\gT \leq \abs{\disc(F_1 F_2)} \leq \abs{\disc(F_1)}^2 \abs{\disc(F_2)}^2 \leq \abs{\disc(R)}^2 \]
(the middle inequality is well-known; see for example \cite[Exercise~8.10]{Jar14}).

Let
\[ K_\gT = \gT(\bA_f) \cap \gGSp_4(\hat\bZ) = \gT(\bA_f) \cap \iota(\hat{R}^\times). \]
The maximal compact subgroup of \( ((F_1 \times F_2) \otimes \bA_f)^\times \) is \( \hat\cO^\times \), so \( K_\gT^m = \gT(\bA_f) \cap \iota(\hat\cO^\times) \).
Therefore
\[ [K_\gT^m:K_\gT] = [\gT(\bA_f) \cap \iota(\hat\cO^\times) : \gT(\bA_f) \cap \iota(\hat{R}^\times)]
   \leq [\hat\cO^\times : \hat{R}^\times]. \]
Thus it will suffice to show that
\[ [\hat\cO^\times:\hat{R}^\times] \leq [\cO:R]^4. \]
We will prove this prime by prime: for each prime~\( p \) we will show that
\begin{equation} \label{unit-index-p}
[\cO_p^\times:R_p^\times] \leq [\cO_p:R_p]^4.
\end{equation}

Let \( N_p = [\cO_p:R_p] \).
If \( \cO_p = R_p \), then \eqref{unit-index-p} is obvious.
So we may assume that \( \cO_p \neq R_p \) and then \( p \) divides \( N_p \).

Since \( 1 \in R_p \) and \( N_p\cO_p \subset R_p \), we have \( 1 + N_p\cO_p \subset R_p \).
Because \( p \) divides \( N_p \), if \( x \in 1 + N_p\cO_p \) then \( x \in \cO_p^\times \) and \( x^{-1} \) is also congruent to \( 1 \) mod \( N_p \).
In other words \( x^{-1} \in 1 + N_p \cO_p \subset R_p \) and so \( x \in R_p^\times \).
This proves that \( 1 + N_p\cO_p \subset R_p^\times \).
Hence
\[ [\cO_p^\times:R_p^\times] \leq [\cO_p^\times : 1 + N_p\cO_p]. \]
Finally the inclusion \( \cO_p^\times \to \cO_p \) induces an injective map of sets \( \cO_p^\times / (1 + N_p\cO_p) \to \cO_p/N_p\cO_p \)
and so
\[ [\cO_p^\times : 1 + N_p\cO_p] \leq [\cO_p:N_p\cO_p] = N_p^4. \]
This completes the proof.
\end{proof}

Our second lemma is a general result on endomorphisms and isogenies of abelian varieties.
Write \( Z(R) \) for the centre of a ring \( R \).
(Note that the analogous statement to \cref{isogeny-discriminants} still holds if you replace \( Z(\End(A)) \) and \( Z(\End(B)) \) by \( \End(A) \) and \( \End(B) \) respectively, with the same modification to the proof.)

\begin{lemma} \label{isogeny-discriminants}
Let \( A \) and \( B \) be abelian varieties such that that there exists an isogeny \( \varphi \colon A \to B \) of degree~\( N \), and let \( r = \rk_\bZ(Z(\End(A))) \).
Then
\[ \abs{\disc(Z(\End(B)))} \leq N^{2r} \abs{\disc(Z(\End(A)))}. \]
\end{lemma}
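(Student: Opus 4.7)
The plan is to use a dual isogeny to identify $\End(A) \otimes \bQ$ with $\End(B) \otimes \bQ$ as $\bQ$-algebras, transport $Z(\End(B))$ across this isomorphism into a $\bZ$-lattice in $Z(\End(A)) \otimes \bQ$, and compare the two resulting lattices via the standard discriminant--index formula.

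First, I would take the isogeny $\psi \colon B \to A$ satisfying $\psi \circ \varphi = [N]_A$ and $\varphi \circ \psi = [N]_B$, and define
\[
G \colon \End(A) \otimes \bQ \to \End(B) \otimes \bQ, \qquad G(\alpha) = \tfrac{1}{N}\varphi\alpha\psi.
\]
Using $\psi\varphi = [N]_A$, a direct computation shows $G(\alpha_1)G(\alpha_2) = G(\alpha_1\alpha_2)$ and $G(1) = 1$, so $G$ is a $\bQ$-algebra isomorphism, with inverse $\beta \mapsto \tfrac{1}{N}\psi\beta\varphi$. Consequently $G$ restricts to an isomorphism of centres $R_A \otimes \bQ \cong R_B \otimes \bQ$, where $R_A = Z(\End(A))$ and $R_B = Z(\End(B))$. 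Write $K$ for this common $\bQ$-algebra and set $R_B' := G^{-1}(R_B) \subset K$, a full-rank $\bZ$-lattice; since $G^{-1}$ is a $\bQ$-algebra isomorphism it preserves the trace pairing on $K$, and hence $|\disc(R_B')| = |\disc(R_B)|$.

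Next I would establish the pair of inclusions $NR_A \subset R_B'$ and $NR_B' \subset R_A$. For $\alpha \in R_A$, the element $NG(\alpha) = \varphi\alpha\psi$ lies in $\End(B)$ (as a genuine composition) and in the centre of $\End(B) \otimes \bQ$ (since $G(\alpha)$ does); as $Z(\End(B) \otimes \bQ) \cap \End(B) = R_B$, this forces $NG(\alpha) \in R_B$, i.e.\ $NR_A \subset R_B'$. Swapping the roles of $A$ and $B$ gives $NR_B' \subset R_A$.

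Finally, put $L = R_A + R_B' \subset K$. The inclusion $NR_A \subset R_B'$ yields $L \subset \tfrac{1}{N}R_B'$, whence $[L : R_B'] \leq N^r$; and $R_A \subset L$ gives $|\disc(L)| \leq |\disc(R_A)|$ from the standard formula $\disc(M_1) = [M_2:M_1]^2 \disc(M_2)$ for nested full-rank $\bZ$-lattices $M_1 \subset M_2$ in $K$. Applying the same formula to $R_B' \subset L$ yields
\[
|\disc(R_B)| = |\disc(R_B')| = [L:R_B']^2 |\disc(L)| \leq N^{2r} |\disc(R_A)|,
\]
which is the asserted bound. I do not foresee any serious obstacle: the whole argument reduces to a lattice-index computation built on elementary properties of the dual isogeny. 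The only point requiring mild care is to confirm that $G$ is a genuine $\bQ$-algebra isomorphism (not merely a $\bQ$-linear bijection), so that centres and the trace pairing, and hence discriminants, transport correctly across it.
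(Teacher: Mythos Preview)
Your proof is correct and follows essentially the same approach as the paper: both use that conjugation by $\varphi$ gives a $\bQ$-algebra isomorphism $\End(A)\otimes\bQ \to \End(B)\otimes\bQ$ preserving the trace form, observe that this isomorphism sends $N\cdot Z(\End(A))$ into $Z(\End(B))$, and finish with the discriminant--index formula. The only difference is that your detour through the auxiliary lattice $L = R_A + R_B'$ (and the second inclusion $NR_B' \subset R_A$) is unnecessary: from $NR_A \subset R_B'$ alone one gets $|\disc(R_B')| \leq |\disc(NR_A)| = N^{2r}|\disc(R_A)|$ directly, which is exactly how the paper concludes.
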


\begin{proof}
Equip \( Z(\End(A)) \otimes \bQ \) and \( Z(\End(B)) \otimes \bQ \) with the symmetric bilinear forms \( (x,y) \mapsto \tr(xy) \), where \( \tr \) denotes the trace on the respective \( \bQ \)-algebras.
We denote these forms \( \psi_A \) and \( \psi_B \).
Recall that by definition,  \( \disc(Z(\End(A))) \) is the discriminant of the bilinear form \( \psi_A \) on \( Z(\End(A)) \), and similarly for \( B \).

The isogeny \( \varphi \) induces an isomorphism \( \iota \colon \End(A) \otimes \bQ \to \End(B) \otimes \bQ \) given by \( \iota(\alpha) = \varphi\alpha\varphi^{-1} \).
This restricts to an isomorphism \( Z(\End(A)) \otimes \bQ \to Z(\End(B)) \otimes \bQ \).
Any isomorphism of \( \bQ \)-algebras preserves the trace, and therefore is compatible with the bilinear forms \( \psi_A \) and \( \psi_B \).

Because there exists an isogeny \( \psi \colon B \to A \) such that \( \psi \circ \varphi = [N] \), \( \iota \) maps \( N \cdot \End(A) \) into \( \End(B) \), and hence \( N \cdot Z(\End(A)) \) into \( Z(\End(B)) \).
Consequently
\[ \abs{\disc(\psi_B|Z(\End(B)))}  \leq  \abs{\disc(\psi_A | N \cdot Z(\End(A)))}  =  N^{2r} \abs{\disc(\psi_A | Z(\End(A)))}.
\qedhere
\]
\end{proof}

Now we combine the above two lemmas to compare \( \Delta \) and \( \Delta' \).

\begin{lemma}\label{comp-poly-equiv}
There exist positive constants \( \newC{complex-const-1} \), \( \newC{complex-power-1} \), \( \newC{complex-const-2} \), and \( \newC{complex-power-2} \) such that, if $Z$ is a special curve in the \( E \, \times \! \) CM Hecke-facteur family, then
\[\refC{complex-const-1}\Delta(Z)^{\refC{complex-power-1}}\leq\Delta'(Z)\leq \refC{complex-const-2}\Delta(Z)^{\refC{complex-power-2}}.\]
\end{lemma}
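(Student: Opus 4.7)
Since both $\Delta(Z)$ and $\Delta'(Z)$ are defined as $\max(N(Z),\cdot)$, the assertion reduces to showing that $\min\{\Delta(s) : s\in Z \text{ a special point}\}$ and $|\disc(\End(E_{x_2}))|$ are bounded by polynomial expressions in one another, up to polynomial factors in $N(Z)$. The main tools are \cref{detstar-isog-comparison}, \cref{isogeny-discriminants}, \cref{delta-disc-bound}, and the Tsimerman-type upper bound $|\disc(\End(A_s))|\leq C\,\Delta(s)^C$ from \cite[Lemma~7.2]{Tsi12} (mentioned as the reverse of \cref{delta-disc-bound}).

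For the upper bound on $\Delta(Z)$, I would construct a specific special point $s_0\in Z$ as follows. Writing $Z=Z_{\gamma,x_2}$, pick a pre-special point $x_0\in X_1^+$ corresponding to a CM elliptic curve $E_0$ of absolutely bounded discriminant (for instance $E(i)$ or $E(\omega)$, choosing whichever is not isogenous to $E_{x_2}$), and set $s_0=\pi(\gamma.(x_0,x_2))$. Then \cref{detstar-isog-comparison} produces a polarised isogeny $\varphi\colon E_0\times E_{x_2}\to A_{s_0}$ of degree $N(Z)$. Because $E_0$ and $E_{x_2}$ are non-isogenous CM elliptic curves, $A_{s_0}$ is non-isotypic, so $Z(\End(A_{s_0}))=\End(A_{s_0})$ and $Z(\End(E_0\times E_{x_2}))=\End(E_0)\times\End(E_{x_2})$, both of $\bZ$-rank $4$. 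Applying \cref{isogeny-discriminants} to $\varphi$ then yields $|\disc(\End(A_{s_0}))|\leq C\,N(Z)^{8}|\disc(\End(E_{x_2}))|$, and \cref{delta-disc-bound} bounds $\Delta(s_0)$ by a polynomial in $N(Z)$ and $|\disc(\End(E_{x_2}))|$, hence in $\Delta'(Z)$. Combining, $\Delta(Z)\leq\max(N(Z),\Delta(s_0))\leq C\,\Delta'(Z)^C$.

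For the reverse inequality, let $s^*\in Z$ be a special point realising $\min_s\Delta(s)$. In the generic (non-isotypic) case, \cref{detstar-isog-comparison} provides an isogeny $\varphi\colon E_1\times E_{x_2}\to A_{s^*}$ of degree $N(Z)$ whose dual $\psi$ has degree $N(Z)^3$; applying \cref{isogeny-discriminants} to $\psi$ with $r=4$ gives $|\disc(\End(E_{x_2}))|\leq N(Z)^{24}|\disc(\End(A_{s^*}))|$, after which the Tsimerman-type bound yields $|\disc(\End(A_{s^*}))|\leq C\,\Delta(s^*)^C\leq C\,\Delta(Z)^C$, as required. I expect the main obstacle to be the isotypic case, in which $A_{s^*}$ is isogenous to $E\times E$ for a single CM elliptic curve $E$: here $Z(\End(A_{s^*}))$ has $\bZ$-rank $2$ rather than $4$, so a direct application of \cref{isogeny-discriminants} only controls discriminants over the centre. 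To handle it, I would exploit the embedding of the imaginary quadratic field $F=\End(E_{x_2})\otimes\bQ$ into $\End(A_{s^*})\otimes\bQ$ (so that $|\disc(F)|$ is controlled by the splitting-field discriminant $D_{\gT}\leq\Delta(s^*)$), and bound the conductor of $\End(E_{x_2})$ in $\cO_F$ via the isogeny $E_1\times E_{x_2}\to A_{s^*}$ together with the index $[K_{\gT}^m:K\cap\gT(\bA_f)]$ component of $\Delta(s^*)$.
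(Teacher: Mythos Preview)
Your proposal is correct and follows essentially the same approach as the paper. The first inequality is handled identically.

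For the second inequality there are two minor differences worth noting. First, to obtain an isogeny in the reverse direction $A_{s^*}\to E_1\times E_{x_2}$, the paper exploits that both surfaces are principally polarised, which yields a reverse isogeny of the \emph{same} degree $N(Z)$; your dual-isogeny route gives degree $N(Z)^3$, which is harmless for a polynomial bound but less sharp. Second, the paper treats the isotypic and non-isotypic cases more uniformly: it applies \cref{isogeny-discriminants} to the centres once, establishing
\[
|\disc(Z(\End(E_1(s)\times E_{x_2})))| \leq N(Z)^{8}\,|\disc(Z(\End(A_s)))|,
\]
and then combines this with Tsimerman's bound on $[\cO_s:Z(\End(A_s))]$ before the case split. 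In the isotypic case the key observation is simply that the projection $E_1(s)\times E_{x_2}\to E_{x_2}$ induces an injection $Z(\End(E_1(s)\times E_{x_2}))\hookrightarrow \End(E_{x_2})$, whence $|\disc(\End(E_{x_2}))|\leq|\disc(Z(\End(E_1(s)\times E_{x_2})))|$; together with $|\disc(Z(\End(A_s))\otimes\bQ)|=D_\gT$ this finishes the argument without needing to separately track the conductor of $\End(E_{x_2})$ as you propose. Your sketch for the isotypic case would also work, but this projection trick is cleaner.
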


\begin{proof}
Write \( Z = Z_{\gamma,x_2} \) where \( \gamma \in \gGSp_4(\bQ)_+ \cap \rM_4(\bZ) \), \( x_2 \in X_2^+ \) and \( \detstar(\gamma) = N(Z) \).

Fix two CM elliptic curves \( E_1 \) and \( E_1' \) which are not isogenous to each other.
Assume that \( E_{x_2} \) is not isogenous to \( E_1 \) (if \( E_{x_2} \) is isogenous to \( E_1 \), then use \( E_1' \) instead of \( E_1 \) and apply the same argument).

Let \( x_1 \) be a point in \( X_1^+ \) corresponding to the elliptic curve \( E_1 \), and let \( s \) denote the image of \( \gamma(x_1, x_2) \) in \( \cA_2 \).
Then \( s \) is a special point contained in \( Z \), so
\[ \Delta(Z) \leq \max \{ N(Z), \Delta(s) \}. \]
By \cref{delta-disc-bound}, we have
\[ \Delta(s) \leq \refC{delta-disc-mult} \, \abs{\disc(\End(A_s))}^{\refC{delta-disc-exp}}. \]
Now \( \gamma \) is the rational representation of a polarised isogeny \( E_1 \times E_{x_2} \to A_s \) of degree \( \detstar(\gamma) = N(Z) \).
Noting that \( \rk_\bZ(\End(E_1 \times E_{x_2})) = 4 \) and that \( \End(E_1 \times E_{x_2}) \) and \( \End(A_s) \) are commutative, \cref{isogeny-discriminants} tells us that
\[ \abs{\disc(\End(A_s))}  \leq  N(Z)^8 \abs{\disc(\End(E_1 \times E_{x_2}))}. \]
Since \( E_{x_2} \) is not isogenous to \( E_1 \), \( \End(E_1 \times E_{x_2}) = \End(E_1) \times \End(E_{x_2}) \) and so
\[ \abs{\disc(\End(E_1 \times E_{x_2}))} = \abs{\disc(\End(E_1)) \disc(\End(E_{x_2}))}
   \leq \refC{max-disc-e1} \Delta'(Z) \]
where \( \newC{max-disc-e1} = \max\{\abs{\disc(\End(E_1))}, \abs{\disc(\End(E_1'))} \} \).

Combining the above inequalities gives
\[ \refC{complex-const-1}\Delta(Z)^{\refC{complex-power-1}}\leq\Delta'(Z). \]

To prove the second inequality of the lemma, let $s\in Z$ be a special point such that $\Delta(s)$ is minimal.
By \cref{detstar-isog-comparison}, there exists an elliptic curve $E_1(s)$ and an isogeny $E_1(s)\times E_{x_2} \to A_s$ of degree $N(Z)$.
Since both \( E_1(s) \times E_{x_2} \) and \( A_s \) are principally polarised, we deduce that there exists an isogeny in the reverse direction \( A_s \to E_1(s)\times E_{x_2} \), also of degree \( N(Z) \).
Hence \cref{isogeny-discriminants} implies that
\[ \abs{\disc(Z(\End(E_1(s) \times E_{x_2})))} \leq N(Z)^8 \abs{\disc(Z(\End(A_s)))}. \]

Let \( \cO_s \) denote the maximal order in \( Z(\End(A_s)) \otimes \bQ \) (which is either an imaginary quadratic field or a product of two imaginary quadratic fields).
Then
\[ \disc(Z(\End(A_s))) = [\cO_s : Z(\End(A_s))]^2 \disc(Z(\End(A_s)) \otimes \bQ). \]
By \cite[Lemma~7.2]{Tsi12}, we have (using the notation from the definition of \( \Delta(s) \))
\[ [\cO_s : Z(\End(A_s))] \leq [K_\gT^m : \gGSp_{2g}(\hat\bZ) \cap \gT(\bA_f)]^{\newC*} D_\gT^{\newC*} \leq \Delta(s)^{\newC*}. \]
Combining the above inequalities, we conclude that
\begin{equation} \label{eqn:z-inequality}
\abs{\disc(Z(\End(E_1(s) \times E_{x_2})))} \leq \Delta(s)^{\newC*} \abs{\disc(Z(\End(A_s)) \otimes \bQ)}.
\end{equation}

We now split into two cases depending on whether \( E_1(s) \) is isogenous to \( E_{x_2} \) or not.
If they are not isogenous, then \( F_1(s) = \End(E_1(s)) \otimes \bQ \) and \( F_2 = \End(E_{x_2}) \otimes \bQ \) are distinct imaginary quadratic fields and we have
\[ Z(\End(E_1(s) \times E_{x_2})) = \End(E_1(s)) \times \End(E_{x_2}) \]
so that
\[ \abs{\disc(\End(E_{x_2}))} \leq \abs{\disc(Z(\End(E_1(s) \times E_{x_2})))} \]
while
\[ \abs{\disc(Z(\End(A_s)) \otimes \bQ)} = \abs{\disc(F_1(s)) \disc(F_2)} \leq \abs{\disc(F_1(s) F_2)}^2 = D_\gT^2. \]
Combining these with \eqref{eqn:z-inequality} completes the proof of the second inequality in this case.

If \( E_1(s) \) is isogenous to \( E_{x_2} \), then \( \End(E_1(s) \times E_{x_2}) \otimes \bQ \cong \rM_2(F) \) where \( F = \End(E_1(s)) \otimes \bQ \cong \End(E_{x_2}) \otimes \bQ \).
Hence \( Z(\End(E_1(s) \times E_{x_2})) \) is an order in \( F \).
If \( \iota \colon E_{x_2} \to E_1(s) \times E_{x_2} \) and \( \beta \colon E_1(s) \times E_{x_2} \to E_{x_2} \) denote the inclusion and projection morphisms, then \( \alpha \mapsto \beta\alpha\iota \) is an injection \( Z(\End(E_1(s) \times E_{x_2})) \to \End(E_{x_2}) \).
Hence
\[ \abs{\disc(\End(E_{x_2}))} \leq \abs{\disc(Z(\End(E_1(s) \times E_{x_2})))}. \]
The splitting field of \( \gT \) is \( F \cong Z(\End(A_s) \otimes \bQ \), so
\[ \abs{\disc(Z(\End(A_s)) \otimes \bQ)} = D_\gT. \]
Again combining these with \eqref{eqn:z-inequality} completes the proof.
\end{proof}

\section{Galois action on Hecke--facteur special subvarieties} \label{sec:galois-action}

Let \( S \) be a Shimura variety component defined over a number field \( E_S \).
Then \( \Gal(\Qbar/E_S) \) acts on the set of \( \Qbar \)-subvarieties of \( S \).
We shall show that, after restricting to \( \Gal(\Qbar/F) \) for a suitable field extension \( F/E_S \), this action permutes the special subvarieties in a given Hecke--facteur family.

The main result of this section (\cref{galoisonhecke}) shares a certain similarity with \cite[Conjecture~12.6]{DR}.
Indeed, \cite[Conjecture~12.6]{DR} predicts that, for any special subvariety \( Z \) in a suitable facteur family, we can make an extension of the base field (whose degree is very small relative to the complexity of \( Z \)) over which all Galois conjugates of \( Z \) are members of finitely many \emph{facteur} families.
\Cref{galoisonhecke} is weaker than this because it only asserts that the Galois conjugates are members of the same \emph{Hecke--facteur} family. On the other hand, \cref{galoisonhecke} only involves an extension of the base field which is independent of~\( Z \). 

Given a Shimura datum component $(\gG,X^+)$, let $X$ denote the $\gG(\RR)$-conjugacy class of morphisms $\bS \to \gG(\bR)$ containing $X^+$. Then $(\gG,X)$ is a Shimura datum.
For any subset $A$ of $X$ and $a\in\gG(\AAA_f)$, we will denote by $[A,a]_K$ the image of $A\times\{a\}$ in $\Sh_K(\gG,X)(\CC)$, in analogy with a standard notation for points. In particular, the geometrically irreducible components of $\Sh_K(\gG,X)$ are the subsets of the form $[X^+,a]_K$ for any $a\in\gG(\AAA_f)$.

We begin with a lemma on the restriction of Hecke correspondences on $\Sh_K(\gG, X)$ to the connected component $[X^+, 1]_K$.
Recall that \( T_g \) denotes the map on algebraic cycles induced by the Hecke correspondence associated with~\( g \) (see section \ref{sec:hecke-cor}).

\pagebreak 

\begin{lemma} \label{hecke-translate-cpts}
Let \( (\gG, X^+) \) be a Shimura datum component and let $K\subset\gG(\AAA_f)$ be a compact open subgroup. 
Let \( S\) denote the Shimura variety component $[X^+ ,1]_K \subset \Sh_K(\gG,X)$.

Let \( Z \) be an irreducible complex algebraic subvariety of \( S \) and let \( Y \) be an irreducible component of \( \pi^{-1}(Z) \).
Let \( g \in \gG(\bQ)_+ \).
Then every irreducible component of \( T_{g^{-1}}(Z) \cap S \) can be written in the form \( [g'Y ,1]_K \) for some \( g' \in \gG(\bQ)_+ \).
\end{lemma}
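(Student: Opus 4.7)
My plan is to unwind the definition of the Hecke correspondence $T_{g^{-1}}$ via a double coset decomposition, apply it to $Z$, and then track which branches of the resulting (multi-valued) image lie in the connected component $S$.

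First, I would fix a decomposition $K g^{-1} K = \bigsqcup_{i=1}^{r} h_i K$ into finitely many left cosets of $K$ (finite because $K$ is compact open in $\gG(\bA_f)$). Unwinding the diagram defining $T_g$ in section~\ref{sec:hecke-cor}, the set-theoretic image of a point $[x,a]_K \in \Sh_K(\gG,X)$ under $T_{g^{-1}}$, viewed as a multi-valued map, is $\{[x, a h_i]_K : 1 \leq i \leq r\}$. Since $Y$ is an irreducible component of $\pi^{-1}(Z)$ we have $\pi(Y) = Z$, so every point of $Z$ admits a representative of the form $(y, 1)$ with $y \in Y$; varying $y$ over $Y$ then yields
\[ T_{g^{-1}}(Z) = \bigcup_{i=1}^{r} [Y, h_i]_K. \]

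Next, each $[Y, h_i]_K$ is the image of the irreducible variety $Y$ under the algebraic morphism $X^+ \to \Sh_K(\gG, X)$ sending $y \mapsto [y, h_i]_K$, and is therefore irreducible. Since $S$ is a connected component of $\Sh_K(\gG, X)$, the connected set $[Y, h_i]_K$ is either entirely contained in $S$ or disjoint from $S$. Consequently, every irreducible component of $T_{g^{-1}}(Z) \cap S$ coincides with $[Y, h_i]_K$ for some $i$ in the subset $I \subset \{1, \dots, r\}$ of indices with $[Y, h_i]_K \subset S$.

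For each $i \in I$, the plan is to exhibit an element $g' \in \gG(\bQ)_+$ with $[Y, h_i]_K = [g' Y, 1]_K$. Pick any $y_0 \in Y$. The condition $[y_0, h_i]_K \in S = [X^+, 1]_K$ unpacks to the existence of $q \in \gG(\bQ)$ and $k \in K$ with $q y_0 \in X^+$ and $q h_i k = 1$. The first equation combined with $y_0 \in X^+$ forces $q \in \gG(\bQ)_+$, while the second gives $q h_i \in K$. Crucially, the latter condition involves $q$ alone, so the same $q$ applies uniformly to every $y \in Y$, giving
\[ [y, h_i]_K = [qy, qh_i]_K = [qy, 1]_K, \]
and thus $[Y, h_i]_K = [qY, 1]_K$, so we may take $g' = q$. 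The main point to handle carefully is the bookkeeping between the two equivalence relations defining $\Sh_K(\gG, X)$ (the left $\gG(\bQ)$-action and the right $K$-action), together with the observation that $q h_i \in K$ is a condition on $q$ alone, permitting a uniform choice of $g'$ over all of $Y$.
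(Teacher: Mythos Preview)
Your proof is correct and follows essentially the same approach as the paper's own proof: both identify the components of $T_{g^{-1}}(Z)$ as $[Y, h]_K$ with $h \in Kg^{-1}K$ (you via an explicit left-coset decomposition, the paper by writing $h = kg^{-1}$ directly), then use that landing in $S = [X^+,1]_K$ forces $h \in \gG(\bQ)_+^{-1} K$ to rewrite the component as $[g'Y,1]_K$. Your version is slightly more detailed in verifying that the element $q$ works uniformly over all of $Y$, but the argument is the same.
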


\begin{proof}
Each irreducible component of \( T_{g^{-1}}(Z) \) has the form
\[ Z' = [Y,kg^{-1}]_K \]
for some \( k \in K \).
If \( Z' \subset S \), then \( kg^{-1} \in \gG(\bQ)_+K \), or in other words
\[ kg^{-1}k' \in \gG(\bQ)_+ \]
for some \( k' \in K \).
Letting \( g' = (kg^{-1}k')^{-1} \), we get
\[ Z' = [Y, g'^{-1}]_K = [g'Y, 1]_K. \qedhere\]
\end{proof}

\begin{remark}\label{rem-det}
Fix a faithful representation $\gG\to\gGL_{m,\bQ}$ such that the image of $K$ is contained in $\gGL_m(\hat{\ZZ})$.
Since \( g' = k'^{-1} g k^{-1} \), we have \( \abs{\det(g')} = \abs{\det(g)} \).
We also claim that the lowest common multiples of the denominators of \( g' \) and \( g \) are equal.
This is because \( g' \) is obtained from \( g \) by multiplying on the left and right by elements of \( K\).
Working prime by prime, we see that such operations cannot increase the lowest common multiple of the denominators.
Since also \( g = k'g'k \), these operations cannot decrease the lowest common multiple of the denominators either.
We conclude that \( \detstar(g') = \detstar(g) \).
\end{remark}

\begin{proposition}\label{galoisonhecke}
Let \( (\gG, X^+) \) be a Shimura datum component and let $K\subset\gG(\AAA_f)$ be a compact open subgroup.
Let \( S\) denote the Shimura variety component $[X^+,1]_K \subset \Sh_K(\gG,X)$.

Let \( (\gH, X_\gH^+, \gH_1, \gH_2) \) be a facteur datum for \( (\gG, X^+) \) and let \( \fF \) denote the associated Hecke--facteur family of special subvarieties of \( S \).

Let $E_S$ denote the extension of \( E_\gG:=E(\gG, X) \) over which $S$ is defined.
Let $F$ denote the compositum of $E_\gH:=E(\gH,X_\gH)$ and $E_S$.

For every special subvariety \( Z \in \fF \), and every \( \sigma \in \Gal(\Qbar / F) \), the Galois conjugate subvariety \( \sigma(Z) \) is again a member of the Hecke--facteur family \( \fF \).
In other words, \( \sigma(Z) = Z_{g_\sigma,x_\sigma} \) for some \( g_\sigma \in \gG(\bQ)_+ \) and some pre-special point \( x_\sigma \in X_2^+ \).
\end{proposition}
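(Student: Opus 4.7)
The plan is to split the problem into two steps: first, reduce to the case \( g = 1 \) using the Galois-equivariance of Hecke correspondences; and second, handle \( Z_{1, x_2} \) via Shimura reciprocity applied to the \( \gH_2^\ad \)-factor of the adjoint Shimura variety attached to \( (\gH, X_\gH) \). The underlying principle is that the ``Hecke'' aspect of \( Z_{g, x_2} \) is controlled by Hecke correspondences on \( \Sh_K(\gG, X) \) (defined over \( E_\gG \subset F \)), while the ``facteur'' aspect is controlled by the Galois action on special points of \( \Sh_{K_2}(\gH_2^\ad, X_2) \) (which involves the field \( E_\gH \subset F \)).

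For the reduction to \( g = 1 \): note that \( Z_{g, x_2} \) is an irreducible component of \( T_{g^{-1}}(Z_{1, x_2}) \cap S \), and \( T_{g^{-1}} \) is defined over \( E_\gG \). Hence for \( \sigma \in \Gal(\Qbar/F) \), \( \sigma(Z_{g, x_2}) \) is a component of \( T_{g^{-1}}(\sigma(Z_{1, x_2})) \cap S \). Granting that \( \sigma(Z_{1, x_2}) = Z_{\tilde g, x'_2} \) for some \( \tilde g \in \gG(\bQ)_+ \) and pre-special \( x'_2 \in X_2^+ \), I apply \cref{hecke-translate-cpts}: each such component has the form \( [g' Y, 1]_K \) with \( Y \) a \( \Gamma \)-translate of \( \tilde g (X_1^+ \times \{x'_2\}) \), giving a subvariety of the form \( Z_{g'', x'_2} \in \fF \).

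For the case \( g = 1 \): set \( K_\gH = K \cap \gH(\bA_f) \) and let \( K_\gH^\ad \) be its image in \( \gH^\ad(\bA_f) \). Shrinking \( K \) if necessary (which only enlarges the Hecke--facteur family and so does not affect the conclusion), I may assume \( K_\gH^\ad = K_1 \times K_2 \). This yields a morphism
\[
p_2 : \Sh_{K_\gH}(\gH, X_\gH) \to \Sh_{K_2}(\gH_2^\ad, X_2)
\]
defined over \( E_\gH \subset F \), while \( \iota : \Sh_{K_\gH}(\gH, X_\gH) \to \Sh_K(\gG, X) \) is defined over \( E_\gH E_\gG \subset F \). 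The point \( \bar x_2 := [x_2, 1]_{K_2} \) is a special point of \( \Sh_{K_2}(\gH_2^\ad, X_2) \), and \( \iota \) maps the component of \( p_2^{-1}(\bar x_2) \) lying in \( [X_1^+ \times \{x_2\}, 1]_{K_\gH} \) onto \( Z_{1, x_2} \). By Shimura's reciprocity law, \( \sigma(\bar x_2) \) is again a special point of \( \Sh_{K_2}(\gH_2^\ad, X_2) \), expressible as \( [x'_2, a_2]_{K_2} \) for some pre-special \( x'_2 \in X_2 \) and \( a_2 \in \gH_2^\ad(\bA_f) \). Since \( \sigma \) commutes with \( p_2 \) and \( \iota \), \( \sigma(Z_{1, x_2}) \) is an irreducible component of \( \iota(p_2^{-1}(\sigma(\bar x_2))) \), whose preimage in \( X_\gH^+ \) consists of Hecke translates of \( X_1^+ \times \{x'_2\} \). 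A further application of \cref{hecke-translate-cpts} expresses the image in \( S \) as \( Z_{\tilde g, x'_2} \) for a suitable \( \tilde g \in \gG(\bQ)_+ \).

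I expect the main technical challenge to be the bookkeeping when \( a_2 \) is non-trivial: this adelic adjustment on the \( \gH_2^\ad \)-factor must be lifted through the morphism \( \Sh_{K_\gH}(\gH, X_\gH) \to \Sh_{K_\gH^\ad}(\gH^\ad, X_\gH^\ad) \) and then through \( \iota \), and ultimately absorbed into a \( \gG(\bQ)_+ \)-element via \cref{hecke-translate-cpts} (applied with the freedom afforded by \cref{rem-det} to track the resulting change of representative). A further delicate point is ensuring that \( x'_2 \) can be taken in \( X_2^+ \) rather than in the opposite connected component of \( X_2 \); this should be resolvable by altering the representative of the class \( [x'_2, a_2]_{K_2} \) using the \( \gH_2^\ad(\bQ) \)-action, which acts transitively on the set of connected components of \( X_2 \).
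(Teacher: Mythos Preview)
Your overall strategy matches the paper's: reduce via Galois-equivariance of $T_{g^{-1}}$ to the case $g=1$, then pass to the adjoint Shimura variety of $\gH$, exploit its product decomposition $\Sh_{K_1}(\gH_1^\ad,X_1)\times\Sh_{K_2}(\gH_2^\ad,X_2)$, use that Galois conjugates of special points are special on the $\gH_2^\ad$-factor, and finally absorb the resulting adelic shift into an element of $\gG(\bQ)_+$ via \cref{hecke-translate-cpts}. The paper carries this out in almost exactly the same order.

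There is, however, one genuine gap. The step ``shrinking $K$ if necessary \ldots\ does not affect the conclusion'' is not innocuous. Replacing $K$ by a smaller $K'$ replaces $S$ by a component $S'$ of a finite cover, and the field of definition $E_{S'}$ of that component can be strictly larger than $E_S$. Consequently the field $F$ grows to some $F'=E_\gH\cdot E_{S'}$, and your argument would only establish Galois-stability of $\fF$ under $\Gal(\Qbar/F')$, not under $\Gal(\Qbar/F)$ as the proposition asserts. The paper sidesteps this by moving in the opposite direction: rather than shrinking $K$, it takes $K_\gH^\ad$ to be a \emph{maximal} compact open subgroup of $\gH^\ad(\bA_f)$ containing the image of $K_\gH$. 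A maximal compact open subgroup of a product $\gH_1^\ad(\bA_f)\times\gH_2^\ad(\bA_f)$ is automatically a product $K_1\times K_2$, so one obtains the morphism $\varphi^\ad:\Sh_{K_\gH}(\gH,X_\gH)\to\Sh_{K_1}(\gH_1^\ad,X_1)\times\Sh_{K_2}(\gH_2^\ad,X_2)$ defined over $E_\gH\subset F$, without touching $K$, $S$, or $F$. With this single correction your argument goes through; in particular, the ``delicate points'' you flag at the end are resolved exactly as the paper does, by observing that $\sigma(Z_{1,x_2})\subset S$ (since $E_S\subset F$) forces the lifted adelic element to lie in $\gG(\bQ)_+K$, which simultaneously supplies the required $\tilde g\in\gG(\bQ)_+$ and places the pre-special point in $X_2^+$.
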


\begin{proof}
First, let $g\in\gG(\QQ)_+$ and $x_2\in X^+_2$ such that $Z=Z_{g,x_2}$. Now let $Z_{x_2}:=[X^+_1,x_2]_{K_\gH}\subseteq\Sh_{K_\gH}(\gH,X_\gH)$, where $K_\gH:=\gH(\AAA_f)\cap K$, and let
\begin{align*}
\varphi_\gH:\Sh_{K_\gH}(\gH,X_\gH)\rightarrow\Sh_K(\gG,X)
\end{align*}
denote the natural morphism of Shimura varieties.
By definition, $Z_{g,x_2}$ is an irreducible component of $T_{g^{-1}}(\varphi_\gH(Z_{x_2}))$.
Since $T_{g^{-1}}$ is defined over $E_\gG\subseteq F$ and $\varphi_\gH$ is defined over $E_\gH\subseteq F$, we see that, for any $\sigma\in\Gal(\Qbar / F)$, the subvariety $\sigma(Z_{g,x_2})$ is an irreducible component of $T_{g^{-1}}(\varphi_\gH(\sigma(Z_{x_2})))$.

Let $K^\ad_\gH\subset\gH^\ad(\AAA_f)$ denote a maximal compact open subgroup containing the image of $K_{\gH}$ under the natural morphism $q \colon \gH(\AAA_f) \rightarrow \gH^\ad(\AAA_f)$. 
Then $K^\ad_\gH=K_1\times K_2$ for maximal compact subgroups $K_1\subset\gH^\ad_1(\AAA_f)$ and $K_2\subset\gH^\ad_2(\AAA_f)$ and we obtain a finite morphism of Shimura varieties
\[\varphi^\ad:\Sh_{K_{\gH}}(\gH,X_\gH)\rightarrow\Sh_{K^\ad_\gH}(\gH^\ad,X^\ad_\gH)=\Sh_{K_1}(\gH^\ad_1,X_1)\times\Sh_{K_2}(\gH^\ad_2,X_2),\]
which is defined over $E_\gH$.
We conclude that, for any \( \sigma \in \Gal(\Qbar / F) \),
\begin{align*}
\varphi^\ad(\sigma(Z_{x_2}))=\sigma(\varphi^\ad(Z_{x_2}))
&=\sigma([X^+_1,1]_{K_1})\times\sigma([\{x_2\},1]_{K_2}).
\end{align*}
The first factor is a geometrically connected component of $\Sh_{K_1}(\gH^\ad_1,X_1)$ because $\Gal(\Qbar / F)$ acts on the set of such components.
The second factor is a special point of $\Sh_{K_2}(\gH^\ad_2,X_2)$ by \cite[Theorem~5.1]{OrrSV}.
Hence
\begin{align*}
\varphi^\ad(\sigma(Z_{x_2}))=\sigma(\varphi^\ad(Z_{x_2}))
&=[X^+_1,a_\sigma]_{K_1}\times[\{y_\sigma\},b_\sigma]_{K_2},
\end{align*}
for some pre-special point $y_\sigma\in X^+_2$ and some $a_\sigma\in\gH^\ad_1(\AAA_f)$ and $b_\sigma\in\gH^\ad_2(\AAA_f)$.
In other words, $\sigma(Z_{x_2})$ is an irreducible component of 
\[(\varphi^\ad)^{-1}([X^+_1,a_\sigma]_{K_1}\times[\{y_\sigma\},b_\sigma]_{K_2}),\]
and so $\sigma(Z_{x_2})$ is equal to $[X^+_1\times\{x_\sigma\},h_\sigma]_{K_\gH}$ for some pre-special point $x_\sigma\in X^+_2$ and some element $h_\sigma\in\gH(\AAA_f)$ such that $q(h_\sigma) = (a_\sigma,b_\sigma)$.

On the other hand, because $E_S \subseteq F$, we have \[\varphi_\gH(\sigma(Z_{x_2}))=\sigma(\varphi_\gH(Z_{x_2}))\subseteq S,\] 
for any $\sigma\in\Gal(\Qbar/F)$.
Hence $[X^+,h_\sigma]_K = S$ and so $h_\sigma=h_{\sigma,\QQ}k$ for some $h_{\sigma,\QQ}\in\gG(\QQ)_+$ and $k\in K$. In other words,
\[\varphi_\gH(\sigma(Z_{x_2}))=[X^+_1\times\{x_\sigma\},h_{\sigma,\QQ}]_K=[h_{\sigma,\QQ}^{-1}(X^+_1\times\{x_\sigma\}),1]_K.\]

Since $\sigma(Z_{g,x_2})$ is an irreducible component of $T_{g^{-1}}(\varphi_\gH(\sigma(Z_{x_2})))$, by \cref{hecke-translate-cpts},
\[ \sigma(Z_{g,x_2}) = [g'h_{\sigma,\bQ}^{-1}(X_1^+ \times \{x_\sigma\}),1]_K \]
for some \( g' \in \gG(\bQ)_+ \).
Thus, letting \( g_\sigma = g'h_{\sigma,\bQ}^{-1} \), we can write \( Z = Z_{g_\sigma,x_\sigma} \).
\end{proof}

\begin{remark}
Using Remark \ref{rem-det}, it is possible to augment the proof of \cref{galoisonhecke} to show that, given a faithful representation $\gG\to\gGL_{m,\bQ}$, the complexities of $Z$ and $\sigma(Z)$ differ by at most a constant multiple independent of $Z$. However, the argument is more technical and also requires \cite[Theorem~1.3]{OrrSV}.
\end{remark}

\section{Height bound for representatives modulo congruence subgroups} \label{sec:congruence}

We will need the following proposition on heights of certain elements in arithmetic groups.
The \( \gG = \gSL_2 \) case of this proposition can be proved by the procedure outlined in \cite[Exercise~1.2.2]{DS05}.
An effective version of the proposition for \( \gG = \gSO_Q \) (where \( Q \) is an integral quadratic form) can be found at \cite[Theorem~8]{LM16}.

\begin{proposition} \label{lift-bound}
Let \( \gG \) be a semisimple \( \bQ \)-algebraic group and let \( \Gamma \subset \gG(\bQ) \) be a congruence subgroup.
Let \( \rho \colon \gG \to \gGL_{m,\bQ} \) be a faithful representation.
For each positive integer \( n \), let
\[ \Gamma(n) = \Gamma \cap \rho^{-1}(\ker(\gGL_m(\bZ) \to \gGL_m(\bZ/n\bZ))). \]
Then there exist constants \( \newC{lift-mult} \) and \( \newC{lift-exp} \) such that, for each positive integer~\( n \), every class in \( \Gamma / \Gamma(n) \) has a representative in \( \Gamma \) whose $1$-height is at most \( \refC{lift-mult} n^{\refC{lift-exp}} \).
\end{proposition}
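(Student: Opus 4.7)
The plan is to combine the finite generation of $\Gamma$ with a logarithmic-diameter bound for the Cayley graph of the finite quotient $\Gamma/\Gamma(n)$. By Borel--Harish-Chandra, $\Gamma$ is finitely generated; fix a symmetric finite generating set $S \subset \Gamma$ and let $M = \max_{s \in S} \rH_1(\rho(s))$. An elementary induction on word length shows that any product of $\ell$ generators, viewed in $\gGL_m(\bQ)$ via $\rho$, has $1$-height at most $C_0 M^\ell$, where $C_0$ depends only on $m$. On the other hand, $\rho$ induces an injection $\Gamma/\Gamma(n) \hookrightarrow \gGL_m(\bZ/n\bZ)$, so the finite quotient has order at most $n^{m^2}$.

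The key step is then to show that the Cayley graph $\Cay(\Gamma/\Gamma(n), \pi(S))$, where $\pi \colon \Gamma \to \Gamma/\Gamma(n)$ denotes the projection, has diameter $O(\log n)$. Granting this, every coset is represented by a word of length $O(\log n)$ in $S$, hence by an element of $\Gamma$ with $1$-height at most $C_0 M^{O(\log n)} = n^{O(1)}$, yielding the proposition. In view of the cardinality bound $|\Gamma/\Gamma(n)| \leq n^{m^2}$, this logarithmic-diameter bound is equivalent to the sequence $\{\Cay(\Gamma/\Gamma(n), \pi(S))\}_n$ being a family of expanders, or equivalently, to $\Gamma$ having property $(\tau)$ with respect to the congruence tower $\{\Gamma(n)\}$. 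For arithmetic subgroups of semisimple $\bQ$-algebraic groups this is a theorem of Clozel (building on Selberg's $3/16$ theorem for $\gSL_2$ and work of Burger--Sarnak, among others).

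The main obstacle is this last step: property $(\tau)$ is a deep result rooted in the theory of automorphic representations. For the two cases singled out in the statement, $\gG = \gSL_2$ and $\gG = \gSO_Q$, elementary proofs are available---continued-fraction-style reduction in the former, as in \cite{DS05}, and explicit arithmetic arguments in the latter, as in \cite{LM16}---bypassing this machinery. For an arbitrary semisimple $\gG$ I would anticipate invoking property $(\tau)$ directly; a morally equivalent alternative would be to construct a fundamental set for $\Gamma(n)$ as a union of $[\Gamma \colon \Gamma(n)]$ low-height $\Gamma$-translates of a Siegel fundamental set for $\Gamma$, but effective control on the heights of these translates requires spectral input of comparable depth.
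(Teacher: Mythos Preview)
Your proposal is correct and follows essentially the same approach as the paper: finite generation via Borel--Harish-Chandra, the expander property for the congruence quotients $\Gamma/\Gamma(n)$ via Clozel's proof of property~$(\tau)$ (building on Burger--Sarnak), the resulting logarithmic diameter bound for the Cayley graphs, and the submultiplicativity of the $1$-height under matrix multiplication to convert word length into a polynomial height bound. The paper adds one preliminary reduction you omit---replacing an arbitrary congruence subgroup $\Gamma$ by $\rho^{-1}(\gGL_m(\bZ))$, which makes the injection $\Gamma/\Gamma(n)\hookrightarrow\gGL_m(\bZ/n\bZ)$ literally true rather than true up to a bounded factor---but this is a minor technical step and does not alter the strategy.
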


Our proof of \cref{lift-bound} relies on the notion of expander families of graphs.
An \defterm{expander family} is an infinite family of finite graphs (in which self-loops and multiple edges are permitted) for which there exists \( \epsilon > 0 \) such that for every graph~\( G \) in the family,
\[ \min \{ \# \partial X/\# X : X \subset V(G), 0 < \# X \leq \# V(G) /2 \} \geq \epsilon, \]
and there is a uniform upper bound for the degrees of the vertices of all graphs in the family.
Here \( V(G) \) means the set of vertices of~\( G \) and \( \partial X \) denotes the set of edges of \( G \) which have one endpoint in \( X \) and the other endpoint not in \( X \).

The relevance of expander families comes from the following theorem.

\begin{theorem} \label{expanders}
Let \( \gG \) be a semisimple \( \bQ \)-algebraic group.
Let \( \rho \colon \gG \to \gGL_{m,\bQ} \) be a faithful representation and let \( \Gamma = \rho^{-1}(\gGL_m(\bZ)) \).
For each positive integer \( n \), let \( \pi_n \colon \Gamma \to \gGL_m(\bZ/n\bZ) \) denote the composition of \( \rho \) with reduction modulo~\( n \).

Suppose that \( \Gamma \) is infinite (equivalently, \( \gG \) is of non-compact type).
Let \( \Delta \) be a finite generating set for \( \Gamma \) such that \( \Delta^{-1} = \Delta \).

The Cayley graphs \( \Cay(\pi_n(\Gamma), \pi_n(\Delta)) \) form an expander family as \( n \) ranges over the positive integers.
\end{theorem}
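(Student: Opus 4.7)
The plan is to deduce this expander property from property $(\tau)$ of the arithmetic group $\Gamma$ relative to its family of principal congruence subgroups, combined with the discrete Cheeger inequality; this is the classical route from arithmetic rigidity to expander families, going back to Margulis.

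First, I would make two standard reductions. Replacing $\gG$ by its simply connected cover is harmless: the central isogeny has finite kernel, so the Cayley graphs before and after differ by bounded-degree covers that preserve the expander property. One can then decompose the simply connected cover into an almost direct product of its $\bQ$-simple factors of non-compact type, each of which contributes independently; a family of Cayley graphs of a product group with respect to a product generating set is expanding if and only if the corresponding families for each factor are.

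Second, I would invoke property $(\tau)$ for the pair $(\Gamma, \{\Gamma(n)\}_{n \geq 1})$. When $\gG(\bR)$ has Kazhdan's property (T), which covers in particular all factors of real rank at least $2$, this is immediate. In the remaining rank-one cases (orthogonal and unitary forms), it is the theorem of Clozel, building on work of Selberg, Kazhdan, and Burger--Sarnak, that every arithmetic lattice in a semisimple real Lie group without compact factors has property $(\tau)$ with respect to its congruence subgroups. Concretely, one obtains $\epsilon_0 > 0$ depending only on $\Gamma$ and $\Delta$ such that, for every $n$, no unitary representation of $\Gamma$ factoring through $\pi_n$ and having no nonzero $\Gamma$-invariant vector admits a unit vector $v$ with $\|\pi(\gamma) v - v\| < \epsilon_0$ for every $\gamma \in \Delta$.

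Third, I would apply this to the quasi-regular representation of $\Gamma$ on the subspace $\ell^2_0(\pi_n(\Gamma))$ of functions orthogonal to constants; the conclusion of $(\tau)$ translates into a uniform lower bound, independent of $n$, on the spectral gap of the normalised adjacency operator of $\Cay(\pi_n(\Gamma), \pi_n(\Delta))$. Since $|\pi_n(\Delta)| \leq |\Delta|$, the graphs in the family have uniformly bounded degree, so the discrete Cheeger--Buser inequality of Dodziuk and Alon--Milman converts this uniform spectral gap into a uniform lower bound on the edge isoperimetric constant, which is exactly the expander condition in the definition.

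The main obstacle is the property $(\tau)$ input. In higher rank it is a direct and well-known consequence of property (T); in rank one, however, it requires the full strength of Clozel's theorem, which ultimately rests on nontrivial bounds toward the generalised Ramanujan conjectures. The remaining steps (the simply connected reduction, the passage through the quasi-regular representation, and the Cheeger inequality) are formal and well-documented black boxes.
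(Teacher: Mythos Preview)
Your proposal is correct and follows essentially the same route as the paper: establish property~$(\tau)$ for $\Gamma$ with respect to its congruence subgroups (via property~(T) in higher rank and Clozel's theorem, built on Burger--Sarnak, in general), then deduce the expander property. The paper is slightly more economical---it cites \cite[Proposition~4.7]{LZ03} and \cite[Proposition~1.2]{LZ89} to package the passage from Clozel's automorphic spectral gap to property~$(\tau)$ and from property~$(\tau)$ to expanders, thereby avoiding your explicit reductions to the simply connected cover and to simple factors (which are not needed, since Clozel's theorem already treats arbitrary semisimple groups) as well as your unpacking via the quasi-regular representation and the Cheeger inequality.
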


\begin{proof}
Let \( \tilde\gG \) denote the simply connected cover of \( \gG \).
If \( \tilde\gG \) is simple of real rank at least~\( 2 \), then it has Kazhdan's property~(\(T\)) \cite{Vas68}.
In general, Clozel \cite[Theorem~3.1]{Clo03} (using a result of Burger and Sarnak \cite{BS91}) proved that the trivial representation is isolated in the support of the regular representation of \( \tilde\gG(\bR) \) on \( L^2(\tilde\gG(\bQ) \bs \tilde\gG(\bA)) \).
Thanks to \cite[Proposition~4.7]{LZ03}, this implies that \( \Gamma \) has property~(\(\tau\)) with respect to its congruence subgroups.
By \cite[Proposition~1.2]{LZ89}, this implies the theorem.
%
\end{proof}

We will now use \cref{expanders} to prove \cref{lift-bound}.

\begin{proof}[Proof of \cref{lift-bound}]
The proposition is obvious if \( \Gamma \) is finite.
Hence we may assume that \( \Gamma \) is infinite.

We begin by reducing to the case where \( \Gamma = \rho^{-1}(\gGL_m(\bZ)) \).
Since \( \Gamma \) is a congruence subgroup, there exists \( n_0 \) such that
\[ \rho^{-1}(\ker(\gGL_m(\bZ) \to \gGL_m(\bZ/n_0\bZ)) = \Gamma(n_0) \subset \Gamma. \]
Since \( \Gamma(n_0n) \subset \Gamma(n) \), it suffices to prove the lemma with \( \Gamma(n_0n) \) in place of \( \Gamma(n) \) (for any \( n \in \bZ \)).
Assuming that the proposition holds for \( \rho^{-1}(\gGL_m(\bZ)) \), we get a set of representatives for \( \rho^{-1}(\gGL_m(\bZ)) / \Gamma(n_0 n) \) of height bounded in terms of \( n \).
A subset of these representatives then form a set of representatives for \( \Gamma(n_0) / \Gamma(n_0 n) \).
Since \( \Gamma(n_0) \) has finite index in \( \Gamma \), we can choose once and for all a set of representatives \( \gamma_1, \dotsc, \gamma_s \) for \( \Gamma / \Gamma(n_0) \).
Then we can obtain representatives for every class in \( \Gamma/\Gamma(n) \) by multiplying the \( \gamma_i \) by representatives for \( \Gamma(n_0) / \Gamma(n_0 n) \).

Thus it suffices to assume that \( \Gamma = \rho^{-1}(\gGL_m(\bZ)) \).

According to \cite[Theorem~6.5]{BHC62}, \( \Gamma \) is finitely generated, say by a set~\( \Delta \).
Enlarging \( \Delta \), we may assume that \( \Delta^{-1} = \Delta \).
Hence we can apply \cref{expanders} to show that the Cayley graphs \( \Cay_n = \Cay(\pi_n(\Gamma), \pi_n(\Delta)) \) form an expander family.

By \cite[Corollary~3.1.10]{Kow}, the graphs in the expander family \( \{ \Cay_n \} \) satisfy
\[ \diam(\Cay_n)  \ll  \log(3\,\#V(\Cay_n)). \]
Since \( \# V(\Cay_n) = \# \pi_n(\Gamma) \leq n^{m^2} \) we deduce that
\[ \diam(\Cay_n)  \leq  \refC{diameter-mult} \log(3n) \]
for some constant \( \newC{diameter-mult} > 0 \).

By the definition of the Cayley graph, any element \( g \in \pi_n(\Gamma) \) can be written as the product of at most \( \diam(\Cay_n) \) elements of \( \pi_n(\Delta) \).
Consequently we can find some \( \tilde{g} \in \Gamma \) such that \( \pi_n(\tilde{g}) = g \) and \( \tilde{g} \) is a product of at most \( \diam(\Cay_n) \) elements of \( \Delta \).
Because \( \rH_1(xy) \leq m\rH_1(x)\rH_1(y) \) for all matrices \( x \) and \( y \), we deduce that
\[ \rH_1(\tilde{g})  \leq  m^{\diam(\Cay_n)-1} \, \max\{\rH_1(\delta):\delta\in\Delta\}^{\diam(\Cay_n)} \]
and thus
\[ \rH_1(\tilde{g})  \leq \newC*^{\refC{diameter-mult} \log(3n)} = \newC* \, n^{\newC*} \]
for suitable constants.
\end{proof}

\begin{remark}
The semisimplicity condition is necessary in \cref{lift-bound}: the proposition fails for the group \( \gG = \Res_{F/\bQ} \bG_m \), where \( F \) is a real quadratic field.
\end{remark}

\section{Parameter height bounds for Hecke--facteur families} \label{sec:param-height-bounds}

The aim of this section is to prove versions of \cite[Conjectures 12.2 and~12.7]{DR} for Hecke--facteur families, modified to use \( \detstar(g) \) in place of \( \deg(\pi(Y_{g,x_2})) \) (the same modification as we made to the definition of complexity).

\begin{lemma} \label{hf-fund-desc}
Let \( (\gG, X^+) \) be a Shimura datum component and let \( S = \Gamma \bs X^+ \) be an associated Shimura variety component. Fix a faithful representation $\rho:\gG\to\gGL_{m,\bQ}$.
Let \( (\gH, X_\gH^+, \gH_1, \gH_2) \) be a facteur datum for \( (\gG, X^+) \) and choose fundamental sets \( \cF_1 \subset X_1^+ \), \( \cF_2 \subset X_2^+ \) and \( \cF \subset X^+ \) as in \cref{fundamental-sets}.

Then there exist constants \( \newC{hf-fund-desc-mult} \) and \( \newC{hf-fund-desc-exp} \) such that, if \( Y \) is a pre-special subvariety in the Hecke--facteur family associated with \( (\gH, X_\gH^+, \gH_1, \gH_2) \) satisfying \( Y \cap \cF \neq \emptyset \),
then there exist \( g \in \gG(\bQ)_+ \) and \( x_2 \in \cF_2 \) satisfying \( Y = Y_{g,x_2} \), \( \detstar(g) = N(\pi(Y)) \) and
\[ \rH_1(g)  \leq  \refC{hf-fund-desc-mult} \, N(\pi(Y))^{\refC{hf-fund-desc-exp}}. \]
\end{lemma}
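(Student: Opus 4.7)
The plan is to construct the desired representative in three $\detstar$-preserving moves and then to invoke a Hecke-theoretic height bound between Siegel sets. Throughout, we may assume $\Gamma \subset \rho^{-1}(\gGL_m(\bZ))$, passing to a finite-index subgroup if necessary (which enlarges $\cF$, but is harmless since the claimed bound is up to constants): then every $\gamma \in \Gamma$ is an integer matrix with $\det\gamma = \pm 1$, so both left- and right-multiplication by elements of $\Gamma$ preserve $|\det|$ and the lowest common multiple of the denominators of a rational matrix, hence preserve $\detstar$.

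Write $N := N(\pi(Y))$. By definition, there exist $\gamma_0 \in \gG(\bQ)_+$ and $x_2^0 \in X_2^+$ with $Z_{\gamma_0, x_2^0} = \pi(Y)$ and $\detstar(\gamma_0) = N$. Since $\pi^{-1}(\pi(Y))$ is a single $\Gamma$-orbit of pre-special components, we can find $\delta \in \Gamma$ with $Y = \delta \cdot Y_{\gamma_0, x_2^0} = Y_{\delta\gamma_0, x_2^0}$. Set $g_1 := \delta\gamma_0$, so $\detstar(g_1) = N$. Next, choose $\gamma_2 \in \Gamma_2 := \Gamma \cap \gH_2(\bQ)_+$ with $x_2 := \gamma_2 x_2^0 \in \cF_2$. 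Because the action of $\gH$ on $X_\gH^+$ factors through $\gH^\ad = \gH_1^\ad \times \gH_2^\ad$, the element $\gamma_2 \in \gH_2$ acts trivially on $X_1^+$, so $\gamma_2 \cdot (X_1^+ \times \{x_2^0\}) = X_1^+ \times \{x_2\}$, and $g_2 := g_1\gamma_2^{-1}$ satisfies $Y = Y_{g_2, x_2}$ with $\detstar(g_2) = N$.

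By hypothesis, pick $y \in Y \cap \cF$ and write $y = g_2 \cdot (z_0, x_2)$ for some $z_0 \in X_1^+$. Choose $\gamma_1 \in \Gamma_1 := \Gamma \cap \gH_1(\bQ)_+$ with $z := \gamma_1 z_0 \in \cF_1$; by the symmetric argument, $\gamma_1 \cdot (z_0, x_2) = (z, x_2)$. Setting $g := g_2\gamma_1^{-1}$, we obtain $Y = Y_{g, x_2}$, $\detstar(g) = N$, and $g \cdot (z, x_2) = y$ with both $(z, x_2) \in \cF_1 \times \cF_2 \subset \cF$ and $y \in \cF$.

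The final step is to observe that $g \in \gG(\bQ)_+$ sends one point of the Siegel fundamental set $\cF$ to another, so an existing Hecke-theoretic height bound (such as \cite[Theorem~1.1]{Orr18}) yields $\rH_1(g) \leq \refC{hf-fund-desc-mult} N^{\refC{hf-fund-desc-exp}}$. The main obstacle is this last invocation: one needs to know that the Siegel fundamental set $\cF$ produced by \cref{fundamental-sets} is of a form compatible with the cited bound, so that the $1$-height of an element of $\gG(\bQ)_+$ mapping a point of $\cF$ to another point of $\cF$ is polynomially controlled by its $\detstar$. The intermediate steps reduce to routine verification using the commuting factorisation $X_\gH^+ = X_1^+ \times X_2^+$ and the $\detstar$-invariance of $\Gamma$-multiplication.
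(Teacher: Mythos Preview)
Your proposal is correct and follows essentially the same approach as the paper. Both arguments start from a representative realising $\detstar = N$, adjust it by elements of $\Gamma_1$ and $\Gamma_2$ (and in your case an initial $\delta \in \Gamma$) so that some point of $\cF$ is mapped by the new representative $g$ to a point of $\cF_1 \times \cF_2 \subset \cF$, and then invoke \cite[Theorem~1.1]{Orr18}. The paper's version is slightly more compact: it picks $x \in Y \cap \cF$ first and then translates $g'^{-1}x$ into $\cF_1 \times \cF_2$ by a single element $\gamma \in (\Gamma_1 \cap \gH_1(\bR)_+).(\Gamma_2 \cap \gH_2(\bR)_+)$, rather than separating the $\Gamma_2$-move (to land $x_2$ in $\cF_2$) and the $\Gamma_1$-move (to land $z$ in $\cF_1$) as you do. Your extra initial step, lifting a minimal-$\detstar$ representative of $\pi(Y)$ to one of $Y$ via $\delta \in \Gamma$, makes explicit a point the paper absorbs into the phrase ``we may choose $g'$''.

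Regarding the obstacle you flag: it is not an obstacle in this setting. \Cref{fundamental-sets} constructs $\cF$ explicitly as a Siegel fundamental set $C_+.\fS^+.x_0$ with $\fS$ a Siegel set in $\gG(\bR)$ in the sense of \cite[section~2B]{Orr18}, which is precisely the hypothesis needed for \cite[Theorem~1.1]{Orr18}. That theorem bounds $\rH_1(g)$ polynomially in $\abs{\det(g)} \cdot (\max_{i,j} b_{ij})^m$, which is at most $\detstar(g)$ since $\max\{b_{ij}\} \leq \lcm\{b_{ij}\}$; the paper spells out this last inequality explicitly.
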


\begin{proof}
By the definition of the Hecke--facteur family,
we can write
\[ Y = Y_{g',x_2'}  = g'(X^+_1\times\{x_2'\}) \]
for some \( g' \in \gG(\bQ)_+ \) and \( x_2' \in X_2^+ \).
We may choose \( g' \) so that \( \detstar(g') = N(\pi(Y)) \).

Pick a point \( x \in Y \cap \cF \) (which we are assuming to be non-empty).
Then $g'^{-1}x \in X^+$ so by \cref{fundamental-sets}, we can choose
\[ \gamma \in (\Gamma_1 \cap \gH_1(\RR)_+).(\Gamma_2 \cap \gH_2(\RR)_+) \subset \Gamma \cap \gH(\RR)_+ \]
such that \( \gamma^{-1} g'^{-1} x \in \cF_1 \times \cF_2 \subset \cF\).

Write \( g = g'\gamma \) and let \( g^{-1}x = (x_1, x_2) \).
Here \( x_i \in \cF_i \).
Because \( \gH_1 \) commutes with \( \gH_2 \), we have \( Y = Y_{g,x_2} \).

Because \( x \) and \( g^{-1}x \) are both in \( \cF \), \cite[Theorem~1.1]{Orr18} gives a polynomial bound for \( \rH_1(g) \) in terms of
\[ \abs{\det(g)} \cdot (\max\{b_{ij} : 1 \leq i,j \leq n\})^n \]
where the entries of \( g \) are written in lowest terms as \( a_{ij}/b_{ij} \) (with \( b_{ij} > 0 \)).
Since \( \max\{b_{ij}\} \leq \lcm\{b_{ij}\} \), this implies that \( H_1(g) \) is polynomially bounded in terms of
\( \detstar(g) = \detstar(g') = N(\pi(Y)) \).
\end{proof}

The following lemma is a modified special case of \cite[Conjecture~12.7]{DR}.

\begin{proposition} \label{conj127-det}
Let \( (\gG, X^+) \), \( S \), $\rho$, \( (\gH, X_\gH^+, \gH_1, \gH_2) \), \( \cF \), \( \cF_1 \) and \( \cF_2 \) be as in \cref{hf-fund-desc}. 

Then there exist constants \( \newC{conj127-det-mult} \) and \( \newC{conj127-det-exp} \) such that,
if \( Y \) is a pre-special subvariety in the Hecke--facteur family associated with \( (\gH, X_\gH^+, \gH_1, \gH_2) \) satisfying \( Y \cap \cF \neq \emptyset \) and \( z \in \cF \cap \Gamma Y \), then there exists \( \gamma \in \Gamma \) satisfying \( z \in \gamma.Y \) and
\[ \rH_1(\gamma)  \leq  \refC{conj127-det-mult} \, N(\pi(Y))^{\refC{conj127-det-exp}}. \]
\end{proposition}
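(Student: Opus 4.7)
My plan combines \cref{hf-fund-desc}, \cref{lift-bound}, and \cite[Theorem~1.1]{Orr18}. The strategy is: use the product structure $\cF_1 \times \cF_2 \subset \cF$ to produce an element $h \in \gG(\bQ)_+$ with $h^{-1}z \in \cF$, so that Orr's theorem bounds $\rH_1(h)$; then correct $h$ by a stabiliser element into an element $\gamma \in \Gamma$, with the correction itself of bounded height thanks to \cref{lift-bound}.

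Assume (after passing to a finite-index subgroup if necessary, absorbed into the constants) that $\rho(\Gamma) \subset \gGL_m(\bZ)$, so that every $\delta \in \Gamma$ has $\det(\rho(\delta)) = \pm 1$ and $\rho(\delta^{-1}) \in \gGL_m(\bZ)$. By \cref{hf-fund-desc}, write $Y = g(X_1^+ \times \{x_2\})$ with $x_2 \in \cF_2$, $\detstar(g) = N(\pi(Y))$, and both $\rH_1(g)$ and $\rH_1(g^{-1})$ polynomially bounded in $N(\pi(Y))$. Pick $\gamma_0 \in \Gamma$ with $z \in \gamma_0 Y$ and write $\gamma_0^{-1}z = g(x_1',x_2)$. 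Choose $\tilde\alpha \in \Gamma_1$ with $\tilde\alpha^{-1}x_1' \in \cF_1$; then $h := \gamma_0 g \tilde\alpha$ satisfies $h^{-1}z = (\tilde\alpha^{-1}x_1', x_2) \in \cF_1 \times \cF_2 \subset \cF$. Multiplying $\rho(g)$ on the left by $\rho(\gamma_0)$ and on the right by $\rho(\tilde\alpha)$, both in $\gGL_m(\bZ)$, preserves $|\det|$ and, working prime by prime, the denominators of entries, so $\detstar(h) = \detstar(g) = N(\pi(Y))$. Hence \cite[Theorem~1.1]{Orr18} gives $\rH_1(h) \leq \newC*\, N(\pi(Y))^{\newC*}$.

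To promote this to a bound for an element of $\Gamma$, we pass through the congruence subgroup
\[ \Gamma_1^g := g^{-1}\Gamma g \cap \gH_1(\bQ) \subset \Gamma_1. \]
A denominator comparison, using the bounds on $\rH_1(g)$ and $\rH_1(g^{-1})$, shows that $\Gamma_1(n_0) \subset \Gamma_1^g$ for some $n_0 \leq \newC*\, N(\pi(Y))^{\newC*}$ (with $\Gamma_1(n)$ defined as in \cref{lift-bound}). Applying \cref{lift-bound} to the semisimple group $\gH_1$ and the congruence subgroup $\Gamma_1$, we obtain representatives $\gamma_1^{(1)},\dotsc,\gamma_1^{(k)} \in \Gamma_1$ for the finitely many cosets of $\Gamma_1^g$ in $\Gamma_1$, each of $\rH_1$ polynomially bounded in $n_0$, hence in $N(\pi(Y))$. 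Writing $\tilde\alpha = \alpha \gamma_1^{(i)}$ with $\alpha \in \Gamma_1^g$, we set
\[ \gamma := \gamma_0 g \alpha g^{-1} = h\,(\gamma_1^{(i)})^{-1} g^{-1}. \]
Then $\gamma \in \Gamma$ since $g\alpha g^{-1} \in \Gamma$ by definition of $\Gamma_1^g$. Because $\alpha \in \gH_1(\bR)$ stabilises $X_1^+ \times \{x_2\}$, we have $\gamma Y = \gamma_0 Y \ni z$, and
\[ \rH_1(\gamma)  \leq  m^2\, \rH_1(h)\, \rH_1((\gamma_1^{(i)})^{-1})\, \rH_1(g^{-1})  \leq  \newC*\, N(\pi(Y))^{\newC*}, \]
with $\rH_1((\gamma_1^{(i)})^{-1})$ controlled via the adjugate formula.

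The main difficulty is that $\cF_1$ is a fundamental set for $\Gamma_1$ and not for the twisted congruence subgroup $\Gamma_1^g$: we cannot directly find $\tilde\alpha \in \Gamma_1^g$ with $\tilde\alpha^{-1}x_1' \in \cF_1$. Decomposing $\tilde\alpha = \alpha \gamma_1^{(i)}$ and using \cref{lift-bound} to control the heights of the coset representatives $\gamma_1^{(i)}$ is what allows the bound on $\rH_1(h)$ provided by Orr's theorem (via $h^{-1}z \in \cF$) to pass to a bound on $\rH_1(\gamma)$ for an honest $\gamma \in \Gamma$.
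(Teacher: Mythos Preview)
Your proof is correct and follows essentially the same route as the paper's: write $Y=Y_{g,x_2}$ via \cref{hf-fund-desc}, use an unknown $\gamma_0\in\Gamma$ and $\tilde\alpha\in\Gamma_1$ to push $z$ into $\cF_1\times\{x_2\}\subset\cF$, apply \cite[Theorem~1.1]{Orr18} to bound $\rH_1(\gamma_0 g\tilde\alpha)$, and then invoke \cref{lift-bound} on $\gH_1$ to replace $\tilde\alpha$ by a bounded-height coset representative modulo a congruence subgroup contained in $g^{-1}\Gamma g$. One small correction: your definition $\Gamma_1^g:=g^{-1}\Gamma g\cap\gH_1(\bQ)$ need not lie inside $\Gamma_1$ in general, so the coset decomposition $\tilde\alpha=\alpha\gamma_1^{(i)}$ is not well-posed as stated; simply take $\Gamma_1^g:=g^{-1}\Gamma g\cap\Gamma_1$ instead (which still contains $\Gamma_1(n_0)$, so \cref{lift-bound} applies exactly as you use it), and the rest of your argument goes through unchanged.
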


\begin{proof}
Write \( Y = Y_{g,x_2} \) as in \cref{hf-fund-desc}.

We are given \( z \in \cF \cap \Gamma Y \), so we can pick \( \gamma' \in \Gamma \) such that \( z \in \gamma'Y \).
Then
\[ g^{-1} \gamma'^{-1}z \in X_1^+ \times \{ x_2 \}. \]
Because \( \cF_1 \) is a fundamental set in \( X_1^+ \), we can choose \( \gamma_1 \in \Gamma_1 \) such that
\[ \gamma_1^{-1} g^{-1} \gamma'^{-1}z \in \cF_1 \times \{ x_2 \}. \]

Because we chose fundamental sets as in \cref{fundamental-sets}, \( \cF_1 \times \{ x_2 \} \subset \cF \).
Since also \( z \in \cF \), by \cite[Theorem~1.1]{Orr18} we get that
\( \rH_1(\gamma' g \gamma_1) \) is polynomially bounded in terms of \( \detstar(\gamma' g \gamma_1) = \detstar(g) = N(\pi(Y)) \).
Because \( \gamma', \gamma_1 \in \Gamma \), \( \detstar(\gamma' g \gamma_1) = \detstar(g) = N(\pi(Y)) \).
Thus there are constants \( \newC{ggg-mult} \) and~\( \newC{ggg-exp} \) such that
\begin{equation} \label{eqn:ggg}
\rH_1(\gamma' g \gamma_1)  \leq  \refC{ggg-mult} N(\pi(Y))^{\refC{ggg-exp}}.
\end{equation}

We can choose \(n \), polynomially bounded in terms of \( \rH_1(g) \), such that \( \Gamma \cap g^{-1} \Gamma g \) contains the principal congruence subgroup \( \Gamma(n) \) (defined with respect to $\rho$).
Then \( \Gamma(n) \cap \Gamma_1 \) is a principal congruence subgroup in \( \gH_1(\bQ) \) (with respect to $\rho_{|\gH_1}$).
Applying \cref{lift-bound} to \( \gH_1 \), we can choose \( \gamma_1' \in \Gamma_1 \) such that \( \rH_1(\gamma_1') \leq \refC{lift-mult} n^{\refC{lift-exp}} \) and \( \gamma_1 \gamma_1'^{-1} \in \Gamma(n) \cap \Gamma_1 \).

To conclude, let
\[ \gamma = \gamma' g \gamma_1 \gamma_1'^{-1} g^{-1}. \]
Because \( \gamma_1 \gamma_1'^{-1} \in \Gamma(n) \subset g^{-1} \Gamma g \), we have \( g \gamma_1 \gamma_1'^{-1} g^{-1} \in \Gamma \) and hence \( \gamma \in \Gamma \).

Since \( \Gamma_1 \) stabilises \( X_1^+ \) and acts trivially on \( X_2^+ \), we have
\[ z \in \gamma'.Y = \gamma' g(X_1^+ \times \{ x_2 \}) = \gamma' g \gamma_1 \gamma_1'^{-1} (X_1^+ \times \{ x_2 \}) = \gamma Y. \]

Finally, the height of \( \gamma' g \gamma_1 \) is polynomially bounded in terms of \( N(\pi(Y)) \) by~\eqref{eqn:ggg}.
By definition, \( \rH_1(\gamma_1') \) (and hence also \( \rH_1(\gamma_1'^{-1}) \)) is polynomially  bounded in terms of~\( n \) and hence in terms of \( \rH_1(g) \).
By \cref{hf-fund-desc}, \( \rH_1(g) \) (and hence also \( \rH_1(g^{-1}) \)) is polynomially bounded in terms of \( N(\pi(Y)) \).
Combining these facts yields the bound for the height of \( \gamma \).
\end{proof}

At last we are ready to prove the following special case of \cite[Conjecture~12.2]{DR} (modified for our definition of complexity).

\begin{proposition} \label{12.2}
Let \( (\gG, X^+) \), \( S \), $\rho$, \( (\gH, X_\gH^+, \gH_1, \gH_2) \), \( \cF \), \( \cF_1 \) and \( \cF_2 \) be as in \cref{hf-fund-desc}.

Then there exist constants \( d \), \( \newC{conj122-det-mult} \) and \( \newC{conj122-det-exp} \) such that, if \( Y \) is a pre-special subvariety in the Hecke--facteur family associated with \( (\gH, X_\gH^+, \gH_1, \gH_2) \) satisfying \( Y \cap \cF \neq \emptyset \), then there exists \( g\in \gG(\bQ)_+ \) and a (pre-special) point \( x \in X^+ \) such that \( Y = g \gH_1(\bR)^+ g^{-1}x \) and
\[ \rH_d(g, x)  \leq  \refC{conj122-det-mult} \, \Delta(\pi(Y))^{\refC{conj122-det-exp}}. \]
\end{proposition}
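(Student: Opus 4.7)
The plan is to construct $g$ and $x$ separately and then combine their height bounds. For $g$, \cref{hf-fund-desc} immediately provides $g \in \gG(\bQ)_+$ and $x_2 \in \cF_2$ with $Y = Y_{g,x_2} = g(X_1^+ \times \{x_2\})$, $\detstar(g) = N(\pi(Y))$, and $\rH_1(g) \leq \refC{hf-fund-desc-mult} N(\pi(Y))^{\refC{hf-fund-desc-exp}}$. Since $N(\pi(Y)) \leq \Delta(\pi(Y))$ by definition, and $\rH_d(g) \leq \rH_1(g)$ for the rational matrix $g$, the $g$-component of the desired height inequality is already under control for any $d \geq 1$.

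To produce $x$, first select a special point $s^* \in \pi(Y)$ attaining the minimum $\min\{\Delta(s) : s \in \pi(Y) \text{ is a special point}\}$, so that $\Delta(s^*) \leq \Delta(\pi(Y))$. By \cite[Theorem~1.1]{DO16}, applied to $(\gG, X^+)$, $\cF$ and $s^*$, there is a pre-image $x^* \in \cF$ of $s^*$ with $\rH_d(x^*) \leq \newC* \Delta(s^*)^{\newC*}$ for some positive integer $d$ and constants depending only on $(\gG, X^+)$ and $\cF$. Since $\pi(x^*) = s^* \in \pi(Y)$, the point $x^*$ lies in $\cF \cap \Gamma Y$, so \cref{conj127-det} produces $\gamma \in \Gamma$ with $x^* \in \gamma Y$ and $\rH_1(\gamma) \leq \newC* N(\pi(Y))^{\newC*}$. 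Set $x := \gamma^{-1} x^* \in Y$. Then $x$ is pre-special, because the Mumford--Tate group of $\gamma^{-1} x^*$ is the $\gG(\bQ)$-conjugate $\gamma^{-1} \MT(x^*) \gamma$, which is again a torus. Writing $g^{-1} x = (x_1', x_2) \in X_1^+ \times \{x_2\}$, and using that $\gH_1(\bR)^+$ acts transitively on $X_1^+$ while commuting with the stabiliser of $x_2$, we obtain
\[ g \gH_1(\bR)^+ g^{-1} x = g \gH_1(\bR)^+ \cdot (x_1', x_2) = g(X_1^+ \times \{x_2\}) = Y, \]
as required.

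The main remaining obstacle is bounding $\rH_d(x) = \rH_d(\gamma^{-1} x^*)$, which reduces to the polynomial propagation of heights on $X^+$ under the $\gG(\bQ)$-action. Realising $X^+$ as an open subset of its compact dual (a flag variety on which $\gG_\bC$ acts algebraically via a fixed representation), the action of $\gamma^{-1}$ in coordinates is given by rational maps whose coefficients have $1$-height bounded polynomially by $\rH_1(\gamma^{-1})$, generalising the explicit fractional linear action in the Siegel case. Since heights of algebraic numbers propagate polynomially under rational operations with rational coefficients, one concludes that $\rH_d(\gamma^{-1} x^*)$ is polynomially bounded in $\rH_1(\gamma)$ and $\rH_d(x^*)$. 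Combining with the earlier bounds on $\rH_1(g)$, $\rH_1(\gamma)$ and $\rH_d(x^*)$, each polynomial in $\Delta(\pi(Y))$, yields the required bound $\rH_d(g, x) \leq \refC{conj122-det-mult} \Delta(\pi(Y))^{\refC{conj122-det-exp}}$.
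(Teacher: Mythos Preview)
Your proof is correct and follows essentially the same route as the paper: obtain $g$ from \cref{hf-fund-desc}, pick a pre-special point of minimal complexity lying in $\cF$ over $\pi(Y)$, use \cref{conj127-det} to pull it back into $Y$ via a $\gamma$ of bounded height, and then verify $Y = g\gH_1(\bR)^+ g^{-1}x$. The only minor differences are that the paper invokes both \cite[Theorems~1.1 and~4.1]{DO16} (not just Theorem~1.1) for the height bound on the pre-special point in $\cF$, and the paper simply asserts the height propagation under $\gamma^{-1}$ that you spell out via the Borel embedding.
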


\begin{proof}
As explained in \cite[section~1.2]{DO16}, there is a constant \( d \) depending only on \( (\gG, X^+) \) such that the pre-special point \( x \) is defined over a number field of degree at most~\( d \).
Because \( g \in \gG(\bQ)_+ \), the height \( \rH_d(g, x) \) is finite for this value of~\( d \).
If \( (\gG, X^+) = (\gGSp_{2g}, \Hg) \), and $\rho$ is the inclusion $\gGSp_{2g}\hookrightarrow\gGL_{2g}$, then the theory of complex multiplication of abelian varieties tells us that we can take \( d = 2g \) (in particular, in the \( E \, \times \! \) CM case, we can take \( d = 4 \)).

Write \( Y = Y_{g,x_2} \) as in \cref{hf-fund-desc}.

Let $x'\in Y$ denote a pre-special point such that $\pi(x')$ has minimal complexity in $\pi(Y)$. Then we may choose $\gamma'\in\Gamma$ such that \( \gamma' x' \in \cF \).
By \cite[Theorems 1.1 and~4.1]{DO16}, the height $\rH_d(\gamma' x')$ is polynomially bounded in terms of $\Delta(\pi(x')) \leq \Delta(\pi(Y))$. 

Since \( \gamma' x' \in \cF \cap \Gamma Y \), by Proposition \ref{conj127-det}, there exists $\gamma\in\Gamma$ with height polynomially bounded in terms of $N(\pi(Y)) \leq \Delta(\pi(Y))$, such that
\[\gamma' x'\in\gamma Y.\]
Therefore, $x=\gamma^{-1}\gamma' x'$
has height polynomially bounded in terms of \( \Delta(\pi(Y)) \).
Meanwhile \( x \in Y \) and so
\[Y=g\gH_1(\RR)^+g^{-1} x.\]
This concludes the proof.
\end{proof}

\section{Proof of Theorems \ref{main-thm} and \ref{main-thm-gen}} \label{sec:main-proof}

\Cref{main-thm} is a special case of \cref{main-thm-gen}.
The proof of \cref{main-thm-gen} follows the same lines as that of \cite[Theorem~14.2]{DR}.
However, \cref{main-thm-gen} is not directly a corollary of \cite[Theorem~14.2]{DR} for three reasons:
\begin{enumerate}
\item we have used a different definition of complexity;
\item \cite[Theorem~14.2]{DR} requires \cite[Conjectures 11.1 and~12.2]{DR} to hold for all optimal points in \( V \), while we have proved versions of these conjectures only for points in the \( E \, \times \! \) CM Hecke--facteur family;
\item the proof of \cite[Theorem~14.2]{DR} uses the property that $\Delta(Z)=\Delta(\sigma(Z))$, which holds for the definition of complexity used in \cite{DR} (see \cite[Theorem~1.3]{OrrSV}) but not for our definition.
\end{enumerate}
We will, therefore, outline the modifications to the proof of \cite[Theorem~14.2]{DR} required in order to obtain \cref{main-thm-gen}.
For the convenience of the reader, we recall the statement of \cref{main-thm-gen} here.

\begin{theorem*} (\cref{main-thm-gen})
Let \( (\gG, X^+) \) be a Shimura datum component and let \( S = \Gamma \bs X^+ \) be an associated Shimura variety component.
Let $V$ be an irreducible algebraic curve in $S$ not contained in any proper special subvariety.

Let \( (\gH, X_\gH^+, \gH_1, \gH_2) \) be a facteur datum in \( (\gG, X^+) \) and let \( \fF \) be the associated Hecke--facteur family of special subvarieties of \( S \).
Suppose that, for any subvariety $Z \in \fF$, we have
\[\dim(Z)\leq\dim(S)-2.\]
Let \( \Sigma \) be the set of points \( s \in S \) for which the smallest special subvariety containing \( s \) is a member of $\fF$.

Assume that \( S\), $\fF$, $V$ and $\Sigma$ satisfy \cref{galois-orbits-conj-gen}.

Then \( V \cap \Sigma \) is finite.
\end{theorem*}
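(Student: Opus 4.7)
The plan is to execute the Pila--Zannier strategy following the framework of \cite[Theorem~14.2]{DR}, but adapting each step to the Hecke--facteur setting and to the modified complexity $\Delta$. First, I would fix a Siegel fundamental set $\cF \subset X^+$ for $\Gamma$ (constructed via \cref{fundamental-sets}) and form the definable (in $\bR_{\mathrm{an},\exp}$) set $\tilde{V} = \pi^{-1}(V) \cap \cF$. Next, I would define a parameter space $D \subset \gG(\bR) \times X^+$ consisting of pairs $(g, x)$ with $x \in \cF$ such that the $\gH_1(\bR)^+$-orbit through $x$ under $g(\cdot)g^{-1}$ meets $\tilde{V}$ at $x$ and $x$ lies on a translate of $X_1^+ \times \{x_2\}$ for some $x_2 \in \cF_2$. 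This set is definable, and each $Z = Z_{g,x_2} \in \fF$ meeting $V$ at $s$ contributes an algebraic point of $D$ with $g \in \gG(\bQ)_+$ and $x$ pre-special mapping to $s$.

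The second step is to produce many rational points in $D$ of controlled height. Given $s \in V \cap \Sigma$ lying in the (unique) $Z \in \fF$, \cref{12.2} supplies $(g, x) \in \gG(\bQ)_+ \times X^+$ with $\rH_d(g, x) \leq \refC{conj122-det-mult} \Delta(Z)^{\refC{conj122-det-exp}}$. Enlarging the field of definition of $V$ to the field $F$ appearing in \cref{galoisonhecke}, each Galois conjugate $\sigma(s)$ of $s$ lies in $V \cap \Sigma$ and its associated special subvariety is $\sigma(Z) \in \fF$ by \cref{galoisonhecke}. Applying \cref{12.2} to each $\sigma(Z)$ and invoking the remark after \cref{galoisonhecke} (together with \cite[Theorem~1.3]{OrrSV}) to control $\Delta(\sigma(Z))$ by a fixed polynomial in $\Delta(Z)$, we obtain at least $\gg \Delta(Z)^{\refC{galois-orbits-gen-exp}}$ rational points in $D$ of height at most a fixed polynomial in $\Delta(Z)$, where the lower bound comes from \cref{galois-orbits-conj-gen}.

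The third step is the Pila--Wilkie counting theorem applied to $D$. If $V \cap \Sigma$ were infinite, $\Delta(Z)$ would be unbounded (since for fixed complexity only finitely many $Z \in \fF$ exist, and each intersects $V$ in finitely many points by the hypothesis $\dim Z \leq \dim S - 2$ together with Hodge genericity). Choosing the Pila--Wilkie exponent $\epsilon$ small relative to $\refC{galois-orbits-gen-exp}/\refC{conj122-det-exp}$, the many rational points force the existence of a positive-dimensional semi-algebraic block $B \subset D$ containing many of them. Projecting $B$ to $X^+$ and then to $S$, and using the hyperbolic Ax--Lindemann theorem for the $\gH_1$-orbits (as in \cite[section~14]{DR}), one shows that $V$ must be contained in a proper special subvariety of $S$, contradicting the hypothesis.

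The main obstacle I expect is the combination of two issues specific to our setting: first, the modified complexity $\Delta$ is not Galois-invariant, so the step of converting the Galois orbit of $s$ into many rational points of uniformly bounded height in $D$ requires careful use of \cref{galoisonhecke} and the polynomial comparison of $\Delta(\sigma(Z))$ with $\Delta(Z)$; second, the algebraic/transcendental dichotomy coming from Pila--Wilkie must be shown to yield an \emph{algebraic} block in $D$ whose projection meets $V$ in a positive-dimensional subvariety cut out by a $\gH_1$-orbit, which is exactly where the assumption $\dim Z \leq \dim S - 2$ enters to ensure the intersections are unlikely and hence the block must produce a non-trivial algebraic relation on $V$.
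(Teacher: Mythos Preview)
Your overall strategy is essentially the paper's, but there is one place where you take an unnecessarily hard route, and the paper's shortcut is worth knowing. You propose to control $\Delta(\sigma(Z))$ by a polynomial in $\Delta(Z)$ via the remark following \cref{galoisonhecke}. The paper explicitly notes that such a comparison is possible but more technical (it requires \cite[Theorem~1.3]{OrrSV}), and it does \emph{not} carry it out. Instead the paper observes that, since $\sigma(P) \in V \cap \Sigma$ and $\sigma(Z)$ is the member of $\fF$ through $\sigma(P)$, one may apply \cref{galois-orbits-conj-gen} directly to the pair $(\sigma(P), \sigma(Z))$: this gives $\Delta(\sigma(Z))$ bounded above by a polynomial in
\[
A := \#\Aut(\bC/L)\cdot \sigma(P) = \#\Aut(\bC/L)\cdot P.
\]
Combining with \cref{12.2}, the heights $\rH_d(g_\sigma, x_\sigma)$ are then bounded by a polynomial in $A$, while the number of distinct tuples $(g_\sigma, x_\sigma, z_\sigma)$ is exactly $A$. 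The counting theorem \cite[Theorem~9.1]{DR} now applies with both sides measured against the single quantity $A$, and no comparison between $\Delta(\sigma(Z))$ and $\Delta(Z)$ is ever needed. The paper makes this point explicitly in a remark immediately after the proof. Your version works too, but it leans on a result the paper declines to prove; the paper's trick is both cleaner and self-contained.

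One smaller correction: in your final step you invoke ``hyperbolic Ax--Lindemann for the $\gH_1$-orbits''. The argument of \cite[section~14]{DR} that the paper follows actually uses the hyperbolic Ax--Schanuel theorem, which is strictly stronger and is what is required to deal with the semialgebraic blocks produced by Pila--Wilkie in this generality.
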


\begin{proof}
Fix a faithful representation $\gG\to\gGL_{m,\bQ}$ (which we use to define heights and complexities).
Because we only consider points whose special closure is a member of the Hecke--facteur family \( \fF \), the set \( \Omega \) used in the proof of \cite[Theorem~14.2]{DR} can be replaced by the one-element set \( \{ \gH_1 \} \). Let \( \cF \) be a fixed fundamental set as in \cref{fundamental-sets}. The constants $d$, $c_\mathcal{F}$ and $\delta_\mathcal{F}$ should be replaced by those afforded to us by \cref{12.2} and the constants $c_G$ and $\delta_G$ should be replaced by those afforded to us by \cref{galois-orbits-conj-gen}. As in \cite{DR}, $L$ is a finitely generated field of definition for $V$. We can and do assume that $L$ contains the field~$F$ afforded to us by \cref{galoisonhecke}.

Consider a point $P\in V\cap\Sigma$, and let $Z$ denote the smallest special subvariety of $S$ containing $P$. By hypothesis, $Z$ is a member of $\fF$. For each $\sigma\in\Aut(\CC/L)$, let
\[z_\sigma\in\mathcal{V}\cap\pi^{-1}(\sigma(P)) \subset X^+,\]
where $\mathcal{V}:=\pi^{-1}(V)\cap\cF$.
Let $Y_\sigma$ denote the smallest pre-special subvariety of $X^+$ containing $z_\sigma$.
Then, by Remark \ref{conjugates}, $\pi(Y_\sigma)=\sigma(Z)$, and so, by \cref{galoisonhecke}, $\pi(Y_\sigma)$ belongs to $\fF$, for all $\sigma\in\Aut(\CC/L)$.

By \cref{12.2} (which replaces \cite[Conjecture~12.2]{DR}),
\[Y_\sigma=g_\sigma\gH_1(\RR)^+g^{-1}_\sigma x_\sigma,\]
for $g_\sigma\in\gG(\QQ)_+$ and $x_\sigma\in X^+$ with \( d \)-height polynomially bounded in terms of \( \Delta(\pi(Y_\sigma)) = \Delta(\sigma(Z))\).
By \cref{galois-orbits-conj-gen} (which replaces \cite[Conjecture~11.1]{DR}), \( \Delta(\sigma(Z)) \) is bounded above by a polynomial in
\[ A := \# \Aut(\bC/L) \cdot \sigma(P)=\# \Aut(\bC/L) \cdot P.\]
Thus, the set \( \Sigma \) used in the proof of \cite[Theorem~14.2]{DR} (which is not the set $\Sigma$ used above) consists of tuples \( (g_\sigma, x_\sigma, z_\sigma) \) which satisfy \( H_d(g_\sigma, x_\sigma) \leq \newC* A^{\newC*} \). Since \( \# \pi_2(\Sigma)=A \) (where $\pi_2$ projects on to the last co-ordinate),
this is sufficient to apply the counting theorem \cite[Theorem~9.1]{DR} (a variant of \cite[Corollary~7.2]{HP16}).
The rest of the proof proceeds identically to that of \cite[Theorem~14.2]{DR}. We just make two remarks regarding the arguments appearing on p.1879 of \cite{DR}:
\begin{enumerate}
\item the equality $\dim(Y_1)=\dim\langle P_0\rangle$ at line 20 is true, by Remark \ref{conjugates}.
In fact we need only the easier inequality $\dim(Y_1)\leq\dim(X)-2$, which holds because $Y_1$ is a symmetric space for $F(\RR)^+$ and so has the same dimension as any member of $\fF$;
\item the equality $\pi_2(F)=G_2$ at line --5 is not difficult to prove, but is non-trivial.
\end{enumerate}
We conclude that $A$ is bounded, so $\Delta(Z)$ is bounded. This implies the theorem.
\end{proof}

\begin{remark}\label{conjugates}
The arguments of \cite{DR} rely on the assertion appearing in that article (before Conjecture~11.1) that, for any $\sigma\in\Aut(\CC/E_S)$ and any special subvariety $Z$ of $S$, the subvariety $\sigma(Z)$ of $S$ is special and $\Delta(\sigma(Z))=\Delta(Z)$ (for the definition of \( \Delta \) used in \cite{DR}). These assertions are true, though not proved in \cite{DR}. They are possibly known to the experts but, to our knowledge, the first proofs can be found in \cite{OrrSV}.
\end{remark}

\begin{remark}
Note that we removed the need for the equality \( \Delta(\sigma(Z)) = \Delta(Z) \), used in \cite[Theorem~14.2]{DR}, or even any comparison between complexities of \( \sigma(Z) \) and \( Z \), by comparing \( H_d(g_\sigma, x_\sigma) \) directly with \( \# \Aut(\bC/L) \cdot \sigma(P) \), which is trivially equal to \( \# \Aut(\bC/L) \cdot P \).
\end{remark}

\section{André's height bound} \label{sec:andre}

The key step in our proof of a special case of the large Galois orbits conjecture, as required for \cref{galois-orbits-thm}, is a generalisation of a height bound due to André for fibres with large endomorphism rings in an abelian scheme over a curve with a point of purely multiplicative reduction.
This height bound is stated and proved in \cite[Ch.~X]{And89} for abelian schemes of odd relative dimension~\( g \).
We want to apply it for \( g=2 \), so we need a generalisation of the theorem.
It turns out that we can replace the condition that \( g \) is odd by the condition that the generic endomorphism algebra of the abelian scheme is a totally real field of odd degree.

The theorem is a height bound for fibres of the abelian scheme whose endomorphism algebra is large in the following sense.
\begin{definition}
An abelian variety \( A \) of dimension~\( g \) is \defterm{exceptional} if there is no injection from \( \End(A) \) to \( \rM_g(\bQ) \).
\end{definition}

\begin{theorem} \label{andre-height-bound}
Let \( V' \) be a smooth connected (not necessarily proper) curve over a number field \( K \).
Let \( s_0 \) be a closed point in \( V' \) and let \( V = V' \setminus \{ s_0 \} \).
Let \( \bar\eta \) be a geometric generic point of~\( V \).
Let \( h \) denote a Weil height on \( V' \).

Let \( \fA \to V \) be an abelian scheme with purely multiplicative reduction at \( s_0 \)
(that is, if \( \fA' \to V' \) is the connected N\'eron model of \( \fA \) over \( V' \) then the fibre \( \fA'_{s_0} \) is a torus).
Let \( g \) be the relative dimension of \( \fA \to V \).

Assume that:
\begin{enumerate}[(i)]
\item \( g > 1 \).
\item \( \End(\fA_{\bar\eta}) \otimes \bQ \) is a totally real field \( E \).
\item \( [E:\bQ] \) is odd.
\item The generic Mumford--Tate group of \( \fA \) is \( \bG_{m,\bQ}.\Res_{E/\bQ} \Sp_{2n,E} \), where \( n = g/[E:\bQ] \).
\end{enumerate} 

Then there exist constants \( \newC{andre-mult} \) and \( \newC{andre-exp} \) such that, for every point \( s \in V(\Qbar) \), if \( \fA_s \) is exceptional then
\[ h(s) \leq \refC{andre-mult} \, [K(s):K]^{\refC{andre-exp}}. \]
\end{theorem}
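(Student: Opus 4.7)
The plan is to adapt the proof of \cite[Ch.~X, Theorem~1.3]{And89}, retracing André's argument but replacing the hypothesis that the relative dimension \( g \) is odd by the stronger structural hypothesis that the generic endomorphism algebra is a totally real field \( E \) of odd degree. The overall machinery is the theory of \( G \)-functions developed in \cite[Ch.~IX]{And89}: after choosing a model of \( \fA \to V \) defined over a number field and a basis of the relative Betti and de Rham cohomology in a neighbourhood of \( s_0 \), the period matrix is given by a matrix of \( G \)-functions in a local parameter \( t \) vanishing at \( s_0 \). The purely multiplicative reduction hypothesis ensures that the local monodromy operator \( N \) at \( s_0 \) is unipotent of maximal rank \( g \), which is precisely the input the \( G \)-function machinery requires to produce a nontrivial arithmetic estimate near \( s_0 \).

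First I would set up the equivariant structure compatibly with \( E \): since the generic Mumford--Tate group is \( \bG_{m,\bQ}.\Res_{E/\bQ} \Sp_{2n,E} \), the local system \( R^1\pi_*\bQ \) carries an \( E \)-action and an \( E \)-linear symplectic form, and the monodromy \( N \) at \( s_0 \) is itself \( E \)-linear. This lets us work with \( E \)-valued periods and \( G \)-functions. Next I would translate the condition that \( \fA_s \) is exceptional into algebraic constraints on these periods at \( s \): since \( E \subseteq \End(\fA_s) \otimes \bQ \) and the latter does not embed into \( \rM_g(\bQ) \), an Albert-type analysis of the possible \( E \)-algebras \( \End(\fA_s) \otimes \bQ \), combined with the constraint imposed by the polarisation, yields nontrivial algebraic relations among the periods and the coordinates of \( s \). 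The Chudnovsky--André diophantine estimates for values of \( G \)-functions then convert these relations into a height bound of the desired polynomial form \( h(s) \leq \refC{andre-mult} \, [K(s):K]^{\refC{andre-exp}} \).

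The main obstacle is to verify that every step of André's original argument which invokes the oddness of \( g \) admits a replacement invoking the oddness of \( [E:\bQ] \). In the original proof this hypothesis is used, at a key step, to preclude a symmetric \( \bQ \)-isogeny splitting of \( \fA_s \) into two subvarieties of equal dimension---such a splitting would create a Mumford--Tate group and a unipotent monodromy pattern compatible with the exceptional structure and break the central step of the \( G \)-function argument. In our setting the \( E \)-module \( H_1(\fA_s, \bQ) \) has rank \( 2n \) and carries an \( E \)-linear symplectic form, and the analogous obstruction would be a symmetric \( E \)-linear splitting into two isotropic \( E \)-submodules of equal rank. The odd-degree hypothesis \( [E:\bQ] \) odd plays the required role whenever the exceptional endomorphisms impose a splitting compatible with the \( E \)-action: it forces either an asymmetric \( E \)-module decomposition, or an enlargement of the centre of \( \End(\fA_s)\otimes\bQ \) beyond \( E \) whose arithmetic complexity again feeds into the \( G \)-function bound. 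Carefully tracing through André's original text and performing this substitution at each relevant step constitutes the main technical work of the proof.
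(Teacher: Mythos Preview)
Your overall plan---adapt André's \( G \)-function argument, isolating the steps where oddness of \( g \) is used and replacing them by consequences of the hypotheses (ii)--(iv)---is exactly the approach the paper takes. However, your diagnosis of \emph{where} and \emph{how} oddness enters André's proof is off, and this matters because the substitute lemmas you would need are not the ones you describe.

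In André's original, oddness of \( g \) is used in three places: to prove that \( \End(\fA_{\bar\eta}) \otimes \bQ \) is a totally real field (now hypothesis (ii)); to compute the generic Mumford--Tate group (now hypothesis (iv)); and inside Construction~2.4.1, which is where the exceptional endomorphisms of \( \fA_s \) are converted into nontrivial algebraic relations among locally invariant periods. It is this last use that requires real work. Let \( \hat{E} \) be a maximal commutative subalgebra of \( \End(\fA_s) \otimes \bQ \) containing \( E \). Oddness is first used to show \( \hat{E} \neq E \); the replacement is an elementary Albert-classification argument showing that if \( \hat{E} \) is a field of odd degree over \( \bQ \) then \( \fA_s \) is not exceptional (\cref{Ehat-E}). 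More delicately, when \( [\hat{E}:E] = 2 \), André's Case~3 uses oddness of \( g \) to conclude that \( \hat{E} \) is a CM field. This is essential: the CM property (together with stability under the Rosati involution) is what makes the eigenspaces \( \hat{W}_{\hat\sigma} \) isotropic for the polarisation form, which in turn allows one to build compatible symplectic bases for Betti and de~Rham cohomology and extract the quadratic period relation. In the present generality \( \hat{E} \) need not automatically be CM, so one must prove that such a choice of \( \hat{E} \) exists (\cref{Ehat-E2}, via \cref{quat-alg-cm} on quaternion algebras with positive involution).

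Your description of the obstruction as ``a symmetric \( \bQ \)-isogeny splitting of \( \fA_s \) into two subvarieties of equal dimension'' or ``a symmetric \( E \)-linear splitting into two isotropic \( E \)-submodules of equal rank'' does not match what is actually happening; the issue is the algebraic structure of \( \hat{E} \) and its interaction with the Rosati involution, not a splitting of the abelian variety. Once you locate the correct technical point and supply the two lemmas above, the rest of André's argument goes through unchanged.
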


\begin{remark} 
Theorem \ref{andre-height-bound} can be found at \cite[Ch.~X, Theorem~1.3]{And89} with conditions (ii)--(iv) replaced by: \( g \) is odd and \( \fA_{\bar\eta} \) is simple.
These conditions, together with the multiplicative reduction condition, imply (ii)--(iv): (ii) is \cite[Ch.~X, Lemma~2.2]{And89}; (iii) holds because the degree of a totally real field acting on a simple abelian variety~\( A \) must divide \( \dim(A) \) \cite[Ch.~IV, \S 20, p.~191, Corollary]{Mum74}; (iv) is \cite[Ch.~X, 3.3 Sublemma]{And89}.
On the other hand, when \( g \) is even, simplicity of \( \fA_{\bar\eta} \) is not sufficient to imply conditions (ii)--(iv). When $g=2$, condition (iv) is automatic because of the classification in section \ref{sec:a2-subvars}. However, when $g=4$, condition (iv) is famously not automatic, as first observed by Mumford.
\end{remark}

\begin{remark} 
Theorem \ref{andre-height-bound} is certainly not as general as possible. In particular, condition (iii) can be relaxed. For example, Masser has recently obtained a similar theorem for abelian schemes whose generic fibre is a principally polarised, simple abelian surface with real multiplication \cite{Masser}.
\end{remark}

\begin{remark} 
Observe that, if the abelian scheme \( \fA \to V \) is principally polarised and the image of the associated map \( V \to \cA_g \) is Hodge generic, then \( \End(\fA_{\bar\eta}) = \bZ \) and conditions (ii)--(iv) are satisfied.
\end{remark}

\begin{remark} 
For \( g = 2 \), abelian surfaces of \( E \, \times \! \) CM type are exceptional with respect to~\( \bQ \): their endomorphism algebra is \( \bQ \times F \), \( F_1 \times F_2 \) or \( \rM_2(F) \) where \( F, F_1, F_2 \) are imaginary quadratic fields.
Each of these algebras contains a commutative subalgebra of dimension at least~\( 3 \) over \( \bQ \), so they do not embed into \( \rM_2(\bQ) \).
Abelian surfaces whose endomorphism algebra is a non-split quaternion algebra over \( \bQ \) are also exceptional.
However abelian surfaces isogenous to the square of a non-CM elliptic curve are not exceptional (their endomorphism algebra is \( \rM_2(\bQ) \)).
\end{remark}

\begin{remark} \label{rmk:neron-model}
We can replace the ``multiplicative reduction'' condition in \cref{andre-height-bound} by the condition: there exists a semiabelian scheme \( \fA' \to V' \) such that \( \fA'_{|V} \cong \fA \) and \( \fA_{s_0} \) is a torus.
This is because \cite[Ch.~7, Prop.~3]{BLR90} tells us that, if \( \fA' \) is a semiabelian scheme whose generic fibre is an abelian variety, then \( \fA' \) is its own connected Néron model.
\end{remark}

Before explaining how to modify the proof of \cite[Ch.~X, Theorem~1.3]{And89} to obtain  \cref{andre-height-bound}, we prove the properties of endomorphism algebras of exceptional abelian varieties which we will use.

\begin{lemma} \label{quat-alg-cm}
Let \( D \) be a quaternion algebra over a totally real field \( E \).
(We allow \( D \) to be split.)
Let \( \dag \) be a positive involution of the \( \bQ \)-algebra \( D \).
Then there exists \( \alpha \in D^\times \) such that \( \alpha^\dag = -\alpha \) and \( E(\alpha) \) is a CM field.
\end{lemma}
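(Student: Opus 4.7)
The plan is to use the classification of positive involutions on a quaternion algebra $D$ over a totally real field $E$.

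First I would show that $\dag$ restricts to the identity on $E$. The restriction is an involutive field automorphism of $E$, and positivity of $\dag$ on $D$ forces $\tr_{E/\bQ}(x x^\dag) > 0$ on $E^\times$. If $\dag|_E$ were non-trivial, then $E$ would be a CM extension of its fixed subfield, contradicting that $E$ is totally real. Hence $\dag|_E = \mathrm{id}$, so $\dag$ is an involution of the first kind. Letting $\bar{\cdot}$ denote the canonical (symplectic) involution on $D$, Skolem--Noether applied to the $E$-algebra automorphism $\dag \circ \bar{\cdot}$ yields $\dag(y) = s \bar{y} s^{-1}$ for some $s \in D^\times$, and involutivity of $\dag$ forces $s^2 \in E$, that is, either $s \in E^\times$ or $\bar{s} = -s$.

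If $\dag = \bar{\cdot}$, positivity forces $D \otimes_{E, \sigma} \bR \cong \mathbb{H}$ at every real place $\sigma$ of $E$, i.e., $D$ is totally definite (the reduced norm is positive definite on the pure quaternions at each real place). Then any nonzero pure quaternion $\alpha$ satisfies $\alpha^\dag = -\alpha$ and $\alpha^2 = -N_{D/E}(\alpha)$ is totally negative, so $E(\alpha) = E(\sqrt{\alpha^2})$ is a CM field.

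Otherwise $\bar{s} = -s$, and I take $\alpha = s$: then $s^\dag = s \bar{s} s^{-1} = -s$ and $s^2 = -N_{D/E}(s) \in E$. A short calculation shows the $(-1)$-eigenspace of $\dag$ on $D$ equals $E \cdot s$ (any antisymmetric element must commute with $s$ up to scalar), so positivity of $\dag$ restricted to this subspace becomes positive definiteness of the quadratic form $c \mapsto \tr_{E/\bQ}(c^2 N_{D/E}(s))$ on $E$. This forces $N_{D/E}(s)$ to be totally positive, hence $s^2$ totally negative, and $E(s)$ is a CM field. The main step requiring care is extracting the correct signature conditions from positivity in each case; once the form of $\dag$ is in hand, the constructions of $\alpha$ are immediate.
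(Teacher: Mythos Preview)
Your argument is correct, and it takes a genuinely different route from the paper's proof. The paper splits into the cases \emph{$D$ non-split} versus \emph{$D$ split}. In the non-split case it observes that any nonzero element of the $(-1)$-eigenspace of $\dag$ generates a field (because $D$ is a division algebra), and that field inherits a nontrivial positive involution, hence is CM. In the split case it chooses an isomorphism $D \cong \rM_2(E)$ carrying $\dag$ to one of the two standard positive involutions and exhibits $\alpha = \fullsmallmatrix{0}{1}{-1}{0}$ explicitly. By contrast, you classify $\dag$ up to inner twist via Skolem--Noether into the symplectic case $\dag = \bar{\cdot}$ (forcing $D$ totally definite) and the orthogonal case $\bar s = -s$, and in each case you extract the required total-negativity of $\alpha^2$ directly from positivity of the trace form. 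Your approach is more uniform in that it never needs an explicit model of the split algebra, and it makes transparent \emph{why} $E(\alpha)$ is CM (namely $\alpha^2 \in E$ is totally negative), whereas the paper's division-algebra argument is slicker but less explicit about the arithmetic of $\alpha^2$. One small point worth stating more carefully: when you assert that the $(-1)$-eigenspace in the orthogonal case is exactly $E\cdot s$, the parenthetical justification is a bit cryptic; it is cleanest to note that an orthogonal involution on a quaternion algebra has symmetric part of $E$-dimension $3$ and antisymmetric part of $E$-dimension $1$, and since $s$ is antisymmetric the latter is $E s$.
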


\begin{proof}
First suppose that \( D \) is non-split over \( E \).

A positive involution must act trivially on the totally real field \( E \), so \( \dag \) is \( E \)-linear.
Since it is an involution, it is diagonalisable with eigenvalues \( \pm 1 \).
A positive involution always acts non-trivially on a quaternion algebra, so we deduce that
\[ \{ \alpha \in D : \alpha^\dag = -\alpha \} \]
is a non-zero \( E \)-linear subspace of \( D \).

Choose any non-zero element \( \alpha \) of this subspace.
The subalgebra \( E(\alpha) \subset D \) is a field because \( D \) is a division algebra.
Because \( \alpha^\dag = -\alpha \), \( \dag \) restricts to a non-trivial involution of the field \( E(\alpha) \).
This restriction remains a positive involution, and it is well-known that any field with a non-trivial positive involution is a CM field.

Now consider the case where \( D \) is split.
We may choose an isomorphism \( D \to \rM_2(E) \) under which \( \dag \) becomes either (a) matrix transposition or (b) the involution \( A \mapsto (\det A)A^{-1} \).
In either case, \( \alpha = \fullsmallmatrix{0}{1}{-1}{0} \) satisfies \( \alpha^\dag = -\alpha \).
Since \( \alpha^2 = -1 \), the algebra \( E(\alpha) \subset D \) is an extension of the totally real field \( E \) obtained by adjoining a square root of a totally negative element of \( E \).
Thus \( E(\alpha) \) is a CM field.
\end{proof}

\begin{lemma} \label{Ehat-E}
Let \( A \) be an abelian variety of dimension~\( g \).
Let \( \hat{E} \) be a maximal commutative subalgebra of \( \End(A) \otimes \bQ \).
If \( A \) is exceptional, then \( \hat{E} \) is not a field of odd degree over~\( \bQ \).
\end{lemma}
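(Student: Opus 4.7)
\noindent\emph{Proof plan.} I would prove the contrapositive: assuming $\hat{E}$ is a field of odd degree over $\bQ$, I would show that $B := \End(A) \otimes \bQ$ admits a faithful representation on a $g$-dimensional $\bQ$-vector space, and hence embeds into $\rM_g(\bQ)$. The first move is to reduce to the isotypic case. Decompose $A$ up to isogeny as $\prod_i A_i^{n_i}$ with the $A_i$ simple and pairwise non-isogenous, so $B = \prod_i \rM_{n_i}(D_i)$ with $D_i := \End(A_i)\otimes\bQ$. The central idempotents $e_i$ cutting out these factors lie in $Z(B)$, hence in every maximal commutative subalgebra, so $\hat{E}$ decomposes as a product of nonzero maximal commutative subalgebras of the $\rM_{n_i}(D_i)$. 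A nontrivial product of nonzero rings is never a field, so there is only one factor, and $A \sim A_0^n$ is isotypic with $B = \rM_n(D)$, $D$ a division algebra with center $F$, $[D:F] = d^2$ and $[F:\bQ] = e$.

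Now any maximal commutative subalgebra of $B$ contains $Z(B) = F$ and equals its own centralizer. Applying the double-centralizer theorem to the subfield $\hat{E} \supset F$ of the central simple $F$-algebra $B$ gives $[\hat{E}:F]^2 = [B:F] = n^2 d^2$, whence $[\hat{E}:\bQ] = nde$. The hypothesis forces $n$, $d$, and $e$ all to be odd. Since CM fields have even degree, oddness of $e$ rules out Albert Type~IV (CM center); since $d$ is odd, Types~II and~III (where $D$ is a quaternion algebra and $d=2$) are also ruled out. We are therefore in Albert Type~I, where $D = F$ is totally real, $d=1$, and $B = \rM_n(F)$.

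Finally, $V := H_1(A, \bQ)$ is a faithful $B$-module of $\bQ$-dimension $2g$, and the unique simple $\rM_n(F)$-module $F^n$ has $\bQ$-dimension $ne$. Hence $2g = kne$ for some positive integer $k$. Since $ne$ is odd, $k$ must be even; writing $k=2m$ yields $g = mne$, and then $(F^n)^m$ is a faithful $B$-module of $\bQ$-dimension exactly $g$, producing the desired embedding $B \hookrightarrow \rM_g(\bQ)$. The main obstacle is essentially structural bookkeeping: one needs to combine the Albert classification cleanly with the $H_1$-dimension count, and to observe that any maximal commutative \emph{subalgebra} that is a \emph{field} forces $A$ to be isotypic, which is what lets us invoke the Albert data in the first place.
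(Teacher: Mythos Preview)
Your proof is correct and follows essentially the same route as the paper's: reduce to a simple algebra (the paper asserts this directly; you obtain it via the central idempotents lying in $\hat{E}$), apply the Albert classification to rule out Types~II--IV by parity of $[\hat{E}:\bQ]=nde$, and in Type~I embed $\rM_n(F)$ into $\rM_g(\bQ)$ using that $ne \mid 2g$ with $ne$ odd forces $ne \mid g$. Your version is somewhat more explicit than the paper's (especially the $H_1$ dimension count and the double-centralizer step), but the underlying argument is the same.
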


\begin{proof}
Assume for contradiction that \( \hat{E} \) is a field of odd degree.
Since \( \End(A) \otimes \bQ \) contains a maximal commutative subalgebra which is a field, \( \End(A) \otimes \bQ \) must be a simple algebra so it is isomorphic to \( \rM_r(D) \) for some division algebra~\( D \) and positive integer~\( r \).

We split into cases depending on the type of \( D \) in the Albert classification.

If \( D \) has type I (a totally real field), then \( [\hat{E}:\bQ] = r[D:\bQ] \).
But \( [\hat{E}:\bQ] \) divides~\( g \).
Hence, \( \rM_r(D) \) injects into \( \rM_g(\bQ) \), contradicting the exceptionality of~\( A \).

If \( D \) has type II or III (a quaternion algebra) or type IV (a division algebra whose centre is a CM field), then any maximal commutative subalgebra of \( \rM_r(D) \) has even degree over~\( \bQ \), contradicting our hypothesis.
\end{proof}

\begin{lemma} \label{Ehat-E2}
Let \( E \) be a totally real field of odd degree.
Let \( A \) be a polarised abelian variety of dimension \( g \), equipped with an embedding \( E \to \End(A) \otimes \bQ \) whose image is stabilised by the Rosati involution.
If \( A \) is exceptional and the degree of a maximal commutative subalgebra of \( \End(A) \otimes \bQ \) is \( 2[E:\bQ] \), then there exists a maximal commutative subalgebra of \( \End(A) \otimes \bQ \) which contains \( E \), is a CM field, and is stabilised by the Rosati involution.
\end{lemma}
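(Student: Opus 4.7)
The approach is to reduce the problem to an application of \cref{quat-alg-cm} on the centraliser $C := Z_B(E)$, where $B := \End(A) \otimes \bQ$. Since $E$ is totally real, the only positive involution on $E$ is trivial, so the Rosati involution $\dag$ restricts to the identity on $E$. Consequently $C$ is $\dag$-stable with $E$ lying in its centre. Because $E$ is central in $C$, every maximal commutative subalgebra of $C$ automatically contains $E$, and such subalgebras are precisely the maximal commutative subalgebras of $B$ that contain $E$. By hypothesis they have $\bQ$-dimension $2[E:\bQ]$, or equivalently $E$-dimension $2$.

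A structural analysis of semisimple $E$-algebras with maximal commutative subalgebras of $E$-dimension $2$ forces $C$ into one of four types (as $E$-algebras): $E \times E$; a quadratic field extension of $E$, either totally real or CM; $\rM_2(E)$; or a division quaternion algebra over $E$. In the last two cases $C$ is a quaternion $E$-algebra, and the plan is to apply \cref{quat-alg-cm} to the $E$-linear positive involution $\dag|_C$ to obtain $\alpha \in C^\times$ with $\alpha^\dag = -\alpha$ and $E(\alpha)$ a CM field. Then $\hat{E} := E(\alpha) \subset C \subset B$ has $E$-dimension $2$, so is maximal commutative in $B$; it is CM and $\dag$-stable because $\alpha^\dag = -\alpha \in \hat{E}$. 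If instead $C$ is itself a CM quadratic extension of $E$, one just takes $\hat{E} := C$, which is automatically $\dag$-stable and CM.

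The heart of the proof, and the step I expect to be the main obstacle, is ruling out the two unfavourable possibilities $C \cong E \times E$ and $C$ a totally real quadratic extension of $E$. For the first, the orthogonal primitive idempotents of $C$ produce an isogeny decomposition $A \sim A_1 \times A_2$ with $E \hookrightarrow \End(A_i) \otimes \bQ$ acting faithfully on each $H_1(A_i,\bQ)$; the divisibility $[E:\bQ] \mid 2 \dim A_i$ combined with $[E:\bQ]$ odd forces $[E:\bQ] \mid \dim A_i$, hence $g \geq 2[E:\bQ]$, whereupon $B \cong E \times E$ would act faithfully on a $\bQ$-vector space of dimension $2[E:\bQ] \leq g$, contradicting exceptionality. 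For the totally real quadratic case $C = F$, the algebra $F$ is self-centralising in $B$ and therefore maximal commutative, forcing $B$ to be simple of the form $\rM_n(D)$ with $F$ a maximal subfield containing both $Z(D)$ and $E$. Combining $[F:\bQ] = 2[E:\bQ]$ with the odd degree of $[E:\bQ]$, the constraint that $F \supseteq E \cdot Z(D)$, and Albert's restrictions on the multiplicity of the simple $B$-module in $H_1(A,\bQ)$ coming from exceptionality, one obtains a contradiction in each remaining sub-case, typically either via non-integrality of some dimension or by $F$ being forced to contain a CM subfield, contradicting total reality.
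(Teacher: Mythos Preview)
Your overall strategy matches the paper's: pass to the centraliser $C = Z_B(E)$, show it is a quaternion algebra over $E$ carrying a positive involution, and apply \cref{quat-alg-cm}. The difference is in how you reach ``$C$ is quaternion over $E$''. The paper does this in one stroke by first proving that $A$ is isotypic: if not, $B$ splits as $B_1 \times B_2$ along the isotypic decomposition, $E$ embeds in each factor, and since the maximal commutative subalgebra has degree $2[E:\bQ]$ one finds that $E$ itself is maximal commutative in each $B_i$; then \cref{Ehat-E} (applied to each isotypic piece $A_i$, using that $[E:\bQ]$ is odd) shows each $A_i$ is non-exceptional, so $B = B_1 \times B_2 \hookrightarrow \rM_{g_1}(\bQ) \times \rM_{g_2}(\bQ) \subset \rM_g(\bQ)$, contradicting exceptionality. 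Once $B$ is simple, the double centraliser theorem gives $Z(C) = E$ and hence $C$ is a (possibly split) quaternion algebra over $E$; no separate field cases arise.

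Your route instead classifies $C$ directly and tries to rule out the non-quaternion possibilities case by case. The argument for $C \cong E \times E$ has a genuine gap: the idempotents $e_1,e_2$ lie in $C$, not necessarily in $Z(B)$, so the decomposition $A \sim A_1 \times A_2$ they induce need not split $B$ as a product, and your assertion ``$B \cong E \times E$'' is unjustified. (There may be nonzero $\Hom(A_1,A_2)$, so even knowing each $A_i$ is non-exceptional would not immediately embed $B$ in $\rM_g(\bQ)$.) The fix is precisely the paper's isotypy argument, which uses the \emph{central} idempotents of $B$ instead. Your treatment of the totally real quadratic case is also only a sketch; the paper avoids this case entirely, since once $B$ is simple the centraliser $C$ has centre $E$ and so cannot be a proper field extension of $E$.
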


\begin{proof}
First we show that \( A \) is isotypic.
Suppose not.
Then \( \End(A) \otimes \bQ \) is not simple, so its maximal commutative subalgebras are never fields.
Since \( A \) has a maximal commutative subalgebra of degree~\( 2 \) over \( E \), it must have \( E \times E \) as a maximal commutative subalgebra.
Therefore, \( A \) is isogenous to a product \( A_1 \times A_2 \), where \( \End(A_1) \otimes \bQ \) and \( \End(A_2) \otimes \bQ \) both have \( E \) as a maximal commutative subalgebra.
By \cref{Ehat-E}, \( A_1 \) and \( A_2 \) are not exceptional.
In other words, \( \End(A_1) \otimes \bQ \) injects into \( \rM_{g_1}(\bQ) \) and \( \End(A_2) \otimes \bQ \) injects into \( \rM_{g_2}(\bQ) \), where \( g_i = \dim(A_i) \).
Since \( \End(A) \otimes \bQ \cong (\End(A_1) \otimes \bQ) \times (\End(A_2) \otimes \bQ) \), we deduce that \( \End(A) \otimes \bQ \) injects into \( \rM_{g_1+g_2}(\bQ) \).
This contradicts the exceptionality of \( A \).

Thus \( A \) is isotypic.
Let \( Z \) denote the centraliser of \( E \) in \( \End(A) \otimes \bQ \).
Since \( E \) is commutative, \( E \subset Z \) and so by the double centraliser theorem, \( E \) is the centre of \( Z \).
Any maximal commutative subalgebra of \( \End(A) \otimes \bQ \) containing \( E \) is also a maximal commutative subalgebra of \( Z \); since such a maximal commutative subalgebra has degree~\( 2 \) over~\( E \), we deduce that \( Z \) is a quaternion algebra over \( E \)
(perhaps a split quaternion algebra).

Let \( \dag \) denote the Rosati involution on \( \End(A) \otimes \bQ \).
Since \( \dag \) stabilises \( E \), it also stabilises its centraliser \( Z \).
Applying \cref{quat-alg-cm}, we get \( \alpha \in Z^\times \) such that \( \alpha^\dag = -\alpha \) and \( E(\alpha) \) is a CM field.
Now \( [E(\alpha):\bQ] = 2[E:\bQ] \) is a maximal commutative subalgebra of \( \End(A) \otimes \bQ \).
Furthermore the fact that \( \alpha^\dag = -\alpha \) implies that \( \dag \) stabilises \( E(\alpha) \).
Thus \( E(\alpha) \) is the desired maximal commutative subalgebra.
\end{proof}

We now describe the modifications to the proof of \cite[Ch.~X, Theorem~1.3]{And89} required to obtain the more general \cref{andre-height-bound}.

\begin{proof}[Proof of \cref{andre-height-bound}]
As mentioned above, conditions (ii) and~(iv) in \cref{andre-height-bound} are precisely the conclusions of \cite[Ch.~X, Lemma~2.2 and Sublemma~3.3]{And89}.
Other than in these lemmas, the only place where \cite{And89} uses the oddness of \( g \) is in Construction~2.4.1, so this is the only part of the proof which we need to modify.

First we recall some of the notation from \cite[Ch.~X]{And89}.
Let \( s \) be a point in \( V(\Qbar) \) such that \( \fA_s \) is exceptional.
Let \( E = \End(\fA_{\bar\eta}) \otimes \bQ \) (we are assuming that this is a totally real field of odd degree) and let \( \hat E \) denote a maximal commutative subalgebra of \( \End(\fA_s) \otimes \bQ \) which contains \( E \).
Let \( n = g/[E:\bQ] \).

Let \( f \colon \fA \to V \) denote the structure map of our abelian scheme.
Let \( F \) be the Galois closure of \( E \) and let \( \hat{F} \) be the compositum of the Galois closures of the simple factors of \( \hat{E} \).
Then the local system of \( F \)-vector spaces \( R_1 f^{\mathrm{an}}_{\bC,*}(F) \) splits as a direct sum \( \bigoplus_{\sigma \colon E \to \bC} W_\sigma \), where \( E \) acts on \( W_\sigma \) via \( \sigma \).
For each \( \sigma \), \( W_\sigma \) has \( F \)-rank~\( 2n \).
Similarly \( H_1(\fA_s(\bC), \hat{F}) \) splits as a direct sum \( \bigoplus_{\hat\sigma \colon \hat{E} \to \bC} \hat{W}_{\hat\sigma} \).
These splittings are compatible in the sense that
\[ (W_\sigma)_s \otimes_F \hat{F} = \bigoplus_{\hat\sigma | \sigma} \hat{W}_{\hat\sigma}. \]

Replacing \( K \) by a finite extension and \( V \) by a Zariski open subset of an étale cover, we may assume that \( F \subset K \), \( \End(\fA_\eta) = \End(\fA_{\bar\eta}) \) (where \( \eta \) denotes the generic point of \( V \) as a \( K \)-variety) and the \( \cO_V \)-module \( H^1_{DR}(\fA/V) \) is free.
Let \( \hat K \) denote the compositum \( K(s) \hat F \).

Similarly to the above, we get splittings
\begin{gather*}
H^0(V, H^1_{DR}(\fA/V)) \otimes_{\cO(V)} K(V) = \bigoplus_{\sigma \colon E \to K(V)} W_{DR}^\sigma,
\\ H^1_{DR}(\fA_s / \hat K) = \bigoplus_{\hat\sigma \colon \hat E \to \hat K} \hat W_{DR}^{\hat\sigma}.
\end{gather*}

Choose a polarisation of the abelian scheme \( \fA \to V \).
Thanks to the isomorphism
\[ R^2 f^{\mathrm{an}}_{\bC,*}(\bQ(1)) \cong \Hom\bigl( \bigwedge\nolimits^2 R_1 f^{\mathrm{an}}_{\bC,*}(\bZ), \bQ(1) \bigr), \]
this polarisation gives rise to a symplectic form \( 2\pi i\langle,\rangle_\bQ \) on \( R_1 f^{\mathrm{an}}_{\bC,*}(\bQ) \) with values in \( \bQ(1) \).
Because \( E \) is totally real, this form (after extension of scalars to \( F \)) is a sum of symplectic forms on the components \( W_\sigma \) which we denote by
\[ 2\pi i \langle,\rangle_\sigma \colon W_\sigma \times W_\sigma \to F(1). \]

Let \( \Delta \) be the image of a holomorphic embedding of the unit disc in \( V'(\bC) \) mapping \( 0 \) to \( s_0 \), let \( \Delta^* = \Delta \setminus \{ s_0 \} \) and let \( U \) be a simply connected dense open subset of \( \Delta^* \).
Note that we can cover \( V(\bC) \) by finitely many such sets~\( U \), so we may assume that our exceptional point \( s \) is in \( U \).

Let \( W^1_\sigma \) denote the maximal constant subsystem of \( W_{\sigma|\Delta^*} \).
As a consequence of the multiplicative reduction condition at \( s_0 \), \( W^1_\sigma \) is totally isotropic with respect to \( \langle,\rangle_\sigma \) and has \( F \)-rank~\( n \) \cite[Ch.~X, Lemma~2.3]{And89}.

Choose bases \( \gamma_{\tau,1}, \dotsc, \gamma_{\tau,n} \) for \( W^1_\tau \) and \( \omega_{\sigma,1}, \dotsc, \omega_{\sigma,2n} \) for \( W_{DR}^\sigma \).
The ``locally invariant periods'' of \( \fA/V \) are the meromorphic functions on \( \Delta^* \) defined by
\[ t \mapsto \int_{\gamma_{\tau,j}(t)} \omega_{\sigma,i}(t) \]
(for \( \sigma \colon E \to K(V) \), \( \tau \colon E \to \bC \), \( 1 \leq i \leq 2n \), \( 1 \leq j \leq n \)).
These are the G-functions which lie at the heart of the proof of \cref{andre-height-bound}.

\medskip

We now define symplectic forms \( \langle,\rangle_{DR}^\sigma \colon W_{DR}^\sigma \times W_{DR}^\sigma \to \hat K \) which are dual to the forms \( 2\pi i \langle,\rangle_\sigma \) in a sense which we will make precise below.

Let \( \xi \) denote the class in \( H^2_{DR}(\fA_\eta/K(V)) \) induced by our chosen polarisation of~\( \fA \).
Since \( \dim_{K(V)} H^{2g}_{DR}(\fA_\eta/K(V)) = 1 \) and \( \xi^g \neq 0 \), the following formula defines a symplectic form \( \langle,\rangle_{DR} \) on \( H^1_{DR}(\fA_\eta/K(V)) \) with values in \( K(V) \):
\[ \alpha \cup \beta \cup \xi^{g-1} = \frac{1}{g} \, \langle \alpha,\beta \rangle_{DR} \; \xi^g \in H^{2g}_{DR}(\fA_\eta/K(V)). \]

Let \( \mathcal{M}(U) \) denote the field of meromorphic functions on~\( U \).
Write \( R_1 f_*(\bQ(1)) \otimes_\bQ \mathcal{M}(U) \) as short hand for \( H^0(U, R_1 f^{\mathrm{an}}_{\bC,*}(\bQ(1))) \otimes_\bQ \mathcal{M}(U). \)
The local system \( W_{\sigma|U} \) is trivial because \( U \) is simply connected, so the pairing \( (\gamma, \omega) \mapsto \int_\gamma \omega \) (in each fibre of \( \fA_{|U} \to U \)) induces an isomorphism
\begin{equation} \label{eqn:de-rham-isom}
R_1 f_*(\bQ(1)) \otimes_\bQ \mathcal{M}(U) \to (H^1_{DR}(\fA_\eta/K(V)) \otimes_{K(V)} \mathcal{M}(U))^\vee.
\end{equation}
Under this isomorphism, the symplectic form \( 2\pi i \langle,\rangle_\bQ \in \bigwedge^2 (R_1 f_*(\bQ(1)) \otimes_\bQ \mathcal{M}(U))^\vee \) corresponds to \( \xi \in H^2_{DR}(\fA_\eta/K(V)) = \bigwedge^2 H_{DR}^1(\fA_\eta/K(V)) \).
A calculation shows that, if \( e_1, \dotsc, e_{2g} \) is a symplectic basis for \( R_1 f^{\mathrm{an}}_{\bC,*}(\bQ(1))_{|U} \) with respect to \( 2\pi i\langle,\rangle_\bQ \) and \( e_1^\vee, \dotsc, e_{2g}^\vee \) is the corresponding dual basis for \( H_{DR}^1(\fA_\eta/K(V)) \otimes_{K(V)} \mathcal{M}(U) \), then \( e_1^\vee, \dotsc, e_{2g}^\vee \) is a symplectic basis with respect to \( \langle,\rangle_{DR} \).

Since \( 2\pi i\langle,\rangle_\bQ \) is the orthogonal sum of symplectic forms on \( W_\sigma \) and the isomorphism~\eqref{eqn:de-rham-isom} is compatible with the actions of \( E \), we deduce that \( \langle,\rangle_{DR} \) is the orthogonal sum of symplectic forms
\[ \langle,\rangle_{DR}^\sigma \colon W_{DR}^\sigma \times W_{DR}^\sigma \to \hat K. \]

\medskip

The notational set-up is complete.
We now explain the modifications of \cite[Ch.~X, Construction~2.4.1]{And89} needed to obtain non-trivial global relations between the values of the locally invariant periods at a point \( s \in V(\ov\bQ) \) where \( \fA_s \) is exceptional.

The initial remarks of \cite[Ch.~X, Construction~2.4.1]{And89} show that \( \hat E \neq E \).
The argument in \cite{And89} relies on \( g \) being odd, but we can replace this by \cref{Ehat-E}.

If \( [\hat E:E] \geq 3 \), then we are in Case~1 of \cite[Ch.~X, Construction~2.4.1]{And89}:
for each \( \sigma \colon E \to \bC \) there are at least three \( \hat\tau \colon \hat{E} \to \bC \) extending \( \sigma \).
Hence at least one of these \( \hat\tau \) must satisfy \( \dim_{\hat{F}} (\hat{W}_{\hat\tau}) < \frac{1}{2} \dim_F ((W_\sigma)_s) \).
The proof for Case~1 in \cite{And89} does not use the oddness of~\( g \), so it works without change.

If \( [\hat E:E] = 2 \), then we may choose \( \hat E \) according to \cref{Ehat-E2}.
For \( \sigma \colon E \to \bC \), let \( \hat\sigma_1 \) and \( \hat\sigma_2 \) denote the two homomorphisms \( \hat E \to \bC \) extending \( \sigma \).
An element of \( \Aut(\bC) \) which exchanges \( \hat\sigma_1 \) and \( \hat\sigma_2 \) will exchange \( \hat W_{\hat\sigma_1} \) and \( \hat W_{\hat\sigma_2} \) so
\[ \dim_{\hat F} (\hat W_{\hat\sigma_1}) = \dim_{\hat F} (\hat W_{\hat\sigma_2}) = \half \dim_F ((W_{\sigma})_s). \]
Thus we are in either Case 2 or Case~3 of \cite[Ch.~X, Construction~2.4.1]{And89}.

Case~2 occurs when
\( \hat W_{\hat\sigma_2} \cap \bigl( (W^1_\sigma)_s \otimes_F \hat F \bigr)  \neq  0 \).
Again the proof in \cite{And89} does not use the oddness of~\( g \), so no change is required.

Finally we consider Case~3, where
\( \hat W_{\hat\sigma_2} \cap \bigl( (W^1_\sigma)_s \otimes_F \hat F \bigr)  =  0 \).
In the setting of \cite{And89}, the oddness of~\( g \) implies that \( \hat E \) is a CM field.
In our more general setting, it might not be true that every choice of maximal commutative subalgebra \( \hat E \subset \End(\fA_s) \otimes \bQ \) containing \( E \) is a CM field, but \cref{Ehat-E2} ensures that it is possible to choose an \( \hat E \) which is a CM field.

The fact that \( \hat E \) is a CM field stabilised by the Rosati involution is crucial for the construction of period relations.
Indeed, the Rosati involution restricts to complex conjugation on \( \hat E \) and so \( \hat W_{\hat\sigma_2} \) is isotropic with respect to~\( \langle,\rangle_\sigma \).
This ensures that we can choose a basis \( \gamma_{\sigma,n+1}, \dotsc, \gamma_{\sigma,2n} \) for \( \hat W_{\hat\sigma_2} \) which, when combined with the previously chosen basis \( \gamma_{\sigma,1}(s), \dotsc, \gamma_{\sigma,n}(s) \) for \( (W^1_\sigma)_s \), forms a symplectic basis for \( (W_\sigma)_s \otimes_F \hat F \).
Similarly, the fact that \( \hat E \) is a CM field stabilised by the Rosati involution implies that we can choose a symplectic basis \( \hat\omega_{\sigma,1}, \dotsc, \hat\omega_{\sigma,2n} \) for \( W_{DR}^\sigma \) in such a way that \( \hat\omega_{\sigma,1}(s), \dotsc, \hat\omega_{\sigma,n}(s) \in \hat W_{DR}^{\hat\sigma_1} \) and \( \hat\omega_{\sigma,n+1}(s), \dotsc, \hat\omega_{\sigma,2n}(s) \in \hat W_{DR}^{\hat\sigma_2} \).

Because these bases for \( (W_\sigma)_s \otimes_F \hat F \) and \( W_{DR}^\sigma \) are symplectic, the resulting period matrix satisfies the Riemann relations \cite[Ch.~X, (2.3.1)--(2.3.3)]{And89}.
The remainder of \cite[Ch.~X, Construction~2.4.1, Case~3]{And89}, using these relations to obtain a quadratic relation between locally invariant periods at \( s \), does not use the oddness of~\( g \) so we can reuse it without change.
\end{proof}

\section{Galois orbits} \label{sec:galois-orbits}

In this section we prove special cases of our Galois orbits conjecture and deduce \cref{galois-orbits-thm,galois-orbits-thm-sch}.
The proof of \cref{galois-orbits-thm-sch} combines \cref{andre-height-bound} with the Masser--W\"ustholz isogeny theorem and a refinement of the Brauer--Siegel theorem.
To deduce \cref{galois-orbits-thm} from \cref{galois-orbits-thm-sch}, we use a toroidal compactification of \( \Ag \) to construct a semiabelian scheme to which we can apply \cref{galois-orbits-thm-sch}.
We also show that the two formulations of the Galois orbits conjecture, \cref{galois-orbits-conj,galois-orbits-conj-sch} are equivalent.

Given an abelian surface scheme \( \fA \to V \), recall that we defined \( N(s) \) and \( \Delta'(s) \) for \( E \, \times \! \) CM points \( s \in V \) in \cref{delta's}.

\begin{proposition} \label{galois-orbits-equiv}
\Cref{galois-orbits-conj,galois-orbits-conj-sch} are equivalent.
\end{proposition}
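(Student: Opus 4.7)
The two conjectures differ in two respects: the choice of complexity ($\Delta(Z)$ versus $\Delta'(s)$) and the framework (a curve $V \subset \cA_2$ versus an abelian scheme $\fA \to V$). The plan is to reduce each conjecture to the other by handling these two differences separately: use \cref{comp-poly-equiv} to translate between the two complexities, and pass between a curve in $\cA_2$ and an abelian scheme over a curve by means of the universal family over a fine moduli cover of $\cA_2$.

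First I would establish the implication \cref{galois-orbits-conj-sch} $\Rightarrow$ \cref{galois-orbits-conj}. Given a Hodge generic curve $V \subset \cA_2$ defined over $L$, fix $n \geq 3$ and let $V'$ be an irreducible component of the preimage of $V$ under the natural map $\cA_{2,n} \to \cA_2$. Then $V' \to V$ is a finite étale cover defined over some finite extension $L'/L$, and pullback of the universal abelian scheme over $\cA_{2,n}$ yields a principally polarised abelian scheme $\fA \to V'$ of relative dimension~$2$ defined over $L'$. The hypotheses of \cref{galois-orbits-conj-sch} are satisfied by $(V',\fA)$: non-isotriviality follows because the moduli map $V' \to \cA_2$ has $1$-dimensional image, while $\End(\fA_{\ov\eta}) = \bZ$ follows from the Hodge genericity of $V$ via the classification recalled in section~\ref{sec:a2-subvars}. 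Given $s \in V \cap \Sigma$ with associated \( E \, \times \! \) CM curve $Z$, any preimage $s' \in V'$ of $s$ is an \( E \, \times \! \) CM point with $\Delta'(s') = \Delta'(Z)$. Since $V' \to V$ has bounded degree and $L'/L$ is a fixed finite extension, routine bookkeeping converts the bound provided by \cref{galois-orbits-conj-sch} for $\#\Aut(\bC/L') \cdot s'$ into a bound for $\#\Aut(\bC/L) \cdot s$, losing only a constant factor; finally, \cref{comp-poly-equiv} replaces $\Delta'(Z)$ by a positive power of $\Delta(Z)$ times a constant.

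For the reverse implication, given $(V, \fA)$ as in \cref{galois-orbits-conj-sch}, the abelian scheme induces a morphism $V \to \cA_2$ defined over~$L$. Non-isotriviality of $\fA$ ensures this morphism is non-constant, so its image has Zariski closure $V_0 \subset \cA_2$, an irreducible curve defined over~$L$. The hypothesis $\End(\fA_{\ov\eta}) = \bZ$ forces the generic Mumford--Tate group to be $\gGSp_4$, hence $V_0$ is not contained in any proper special subvariety. For each \( E \, \times \! \) CM point $s \in V$, its image $s_0 \in V_0$ lies in $\Sigma$ with $\Delta'(s_0) = \Delta'(s)$, and since the morphism $V \to V_0$ is $\Aut(\bC/L)$-equivariant one has $\#\Aut(\bC/L) \cdot s \geq \#\Aut(\bC/L) \cdot s_0$. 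Applying \cref{galois-orbits-conj} to $V_0$ and then invoking \cref{comp-poly-equiv} to pass from $\Delta(Z)$ to $\Delta'(s)$ yields the desired bound.

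The main technical step is the passage to the fine moduli cover $\cA_{2,n}$ in the first direction, which is forced upon us because $\cA_2$ itself does not support a universal abelian scheme. This step is standard, but one must verify that all hypotheses of \cref{galois-orbits-conj-sch} transfer to $(V', \fA)$; these verifications are routine given the moduli-theoretic interpretation of the construction. The remainder of the argument amounts to the comparison of complexities provided by \cref{comp-poly-equiv} together with elementary bookkeeping of Galois orbits under finite morphisms and finite field extensions.
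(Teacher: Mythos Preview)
Your proposal is correct and follows essentially the same argument as the paper: both directions use \cref{comp-poly-equiv} to switch between the two complexities, and the passage between the $\cA_2$-framework and the abelian-scheme framework is handled via the fine-moduli cover $\cA_{2,n}$ (the paper uses $n=3$) in one direction and the induced moduli map $V\to\cA_2$ in the other. The only cosmetic differences are that you treat the two implications in the opposite order and allow general $n\geq 3$; one tiny overstatement is that $V'\to V$ need not be \emph{\'etale} (there can be ramification coming from automorphisms of abelian surfaces), but only finiteness is used and you never rely on \'etaleness in the argument.
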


\begin{proof}
Firstly let \( \fA \to V \) be an abelian scheme as in \cref{galois-orbits-conj-sch}, defined over a finitely generated field \( L \subset \bC \).
This abelian scheme induces a morphism of varieties \( q \colon V \to \cA_2 \), also defined over~\( L \).
Since \( \fA \to V \) is not isotrivial, the image \( q(V) \) is still a curve.
Since \( \End(\fA_{\ov\eta}) = \bZ \), \( q(V) \) is not contained in any proper special subvariety of \( \cA_2 \) (see section~\ref{sec:a2-subvars}).
A point \( s \in V \) is an \( E \, \times \! \) CM point if and only if \( q(s) \in \Sigma \).
Hence we can apply \cref{galois-orbits-conj} to the Zariski closure of \( q(V) \) in~\( \cA_2 \), obtaining a bound
\[ \# \Aut(\bC/L) \cdot q(s)  \geq  \newC* \Delta(Z)^{\newC*} \]
for all \( E \, \times \! \) CM points \( s \in V \), where \( Z \) is the unique \( E \, \times \! \) CM curve containing \( q(s) \).
Using \cref{comp-poly-equiv} and the fact that \( \# \Aut(\bC/L) \cdot s \geq \# \Aut(\bC/L) \cdot q(s) \), we deduce that \cref{galois-orbits-conj} holds.

Conversely, let \( V \subset \cA_2 \) be an algebraic curve as in \cref{galois-orbits-conj}, defined over a finitely generated field \( L \subset \bC \).
Let \( \tilde{V} \) be an irreducible component of the preimage of \( V \) in \( \cA_{2,3} \).
This is defined over a finite extension \( \tilde{L} \) of~\( L \).
The universal abelian scheme over \( \cA_{2,3} \) restricts to an abelian scheme \( \tilde\fA \to \tilde{V} \).
Since \( V \) is not contained in any special subvariety of \( \cA_2 \), \( \End(\tilde\fA_{\ov\eta}) = \bZ \).
If \( s \in V \cap \Sigma \), then we can find an \( E \, \times \! \) CM point \( \tilde{s} \in \tilde{V} \) such that \( q(\tilde{s}) = s \).
Thanks to \cref{galois-orbits-conj-sch}, this satisfies
\[ \# \Aut(\bC/\tilde{L}) \cdot \tilde{s} \geq \newC* \Delta'(\tilde{s})^{\newC*}. \]
Because \( q \) is a finite morphism,
\[ \# \Aut(\bC/L) \cdot s \geq \newC* \, \# \Aut(\bC/\tilde{L}) \cdot \tilde{s}. \]
We can therefore use \cref{comp-poly-equiv} to complete the proof of \cref{galois-orbits-conj-sch}.
\end{proof}

As a first step towards the proof of \cref{galois-orbits-conj-sch} under the conditions of \cref{galois-orbits-thm-sch}, we prove a Galois bound relative to \( N(s) \).
We then combine this with a lower bound for class numbers of imaginary quadratic fields.

\begin{proposition} \label{galois-degree-bound}
Let \( V \) be an irreducible algebraic curve over \( \Qbar \) and let \( \fA \to V \) be a principally polarised non-isotrivial abelian scheme of relative dimension~\( 2 \).
Suppose that \( \End(\fA_{\ov\eta}) = \bZ \), where \( \ov\eta \) denotes a geometric generic point of \( V \).

Suppose also that there exist a curve \( V' \), a semiabelian scheme \( \fA' \to V' \) and an open immersion \( \iota \colon V \to V' \) (all defined over \( \Qbar \)) such that \( \fA \isom \iota^* \fA' \) and there is some point \( s_0 \in V'(\ov\bQ) \) for which the fibre \( \fA'_{s_0} \) is a torus.

Let \( L \) be a number field over which \( V \) and \( \fA \to V \) are defined.

Then there exist positive constants \( \newC{galois-degree-mult} \) and \( \newC{galois-degree-exp} \) such that, for every \( E \, \times \! \) CM point \( s \in V \),
\[ [L(s):L] \geq \refC{galois-degree-mult} \, N(s)^{\refC{galois-degree-exp}}. \]
\end{proposition}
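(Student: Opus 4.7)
The plan is to combine André's height bound (Theorem \ref{andre-height-bound}) with the Masser--Wüstholz isogeny theorem.

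First, I verify that the hypotheses of Theorem \ref{andre-height-bound} apply to the semiabelian scheme \( \fA' \to V' \) at an \( E \, \times \! \) CM point \( s \). The relative dimension is \( g = 2 > 1 \); since \( \End(\fA_{\ov\eta}) = \bZ \), the generic endomorphism algebra is \( E = \bQ \), totally real of odd degree one, and the generic Mumford--Tate group is \( \gGSp_4 \) by Table \ref{tab:a2-special}, matching condition (iv); the semiabelian extension with torus fibre at \( s_0 \) supplies the purely multiplicative reduction hypothesis (see Remark \ref{rmk:neron-model}); and every \( E \, \times \! \) CM abelian surface is exceptional, as observed after Theorem \ref{andre-height-bound}. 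Taking \( h \) to be a Weil height on \( V' \) pulled back from a projective compactification of \( \cA_2 \), we obtain \( h(s) \leq \newC* [L(s):L]^{\newC*} \). The standard comparison between such a Weil height on \( \cA_2 \) and the stable Faltings height then yields
\[ h_{\mathrm{Fal}}(\fA_s) \leq \newC* \, [L(s):L]^{\newC*}. \]

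Second, I produce a product decomposition of \( \fA_s \) over a controlled number field. By Silverberg's theorem, every geometric endomorphism of \( \fA_s \) is already defined over \( L' := L(s)(\fA_s[3]) \), and \( [L':L(s)] \leq \abs{\gGL_4(\bF_3)} \). Over \( L' \), the idempotents in \( \End(\fA_s) \otimes \bQ \) witnessing the decomposition \( \fA_s \sim E_1 \times E_2 \) are rational, so one can produce elliptic curves \( E_1, E_2 \) and an isogeny \( E_1 \times E_2 \to \fA_s \), all defined over \( L' \). Applying the Masser--Wüstholz isogeny theorem to the isogenous pair \( (\fA_s, E_1 \times E_2) \) over \( L' \) gives an isogeny of degree polynomial in \( [L':\bQ] \) and \( h_{\mathrm{Fal}}(\fA_s) \), hence polynomial in \( [L(s):L] \). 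Upgrading this to a polarised isogeny of no greater degree via Lemma \ref{excm-polarised} yields \( N(s) \leq \newC* [L(s):L]^{\newC*} \), which is the claimed inequality in rearranged form.

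The principal technical hurdle is reconciling fields of definition: the Masser--Wüstholz theorem requires both abelian varieties to sit over a common number field, yet the elliptic factors \( E_1, E_2 \) are in general not \( L(s) \)-rational, and simply enlarging \( L(s) \) to contain the field of moduli of \( E_2 \) is fatal because the class number of \( \End(E_2) \) can be arbitrarily large relative to \( N(s) \). Silverberg's absolute bound on the field over which the whole endomorphism ring is defined is what sidesteps this obstruction. A secondary but non-trivial point is the chain of height comparisons needed to transfer André's Weil height bound on \( V' \) into a Faltings height bound suitable for feeding into Masser--Wüstholz.
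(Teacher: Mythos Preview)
Your proof is correct and broadly parallel to the paper's, differing mainly in how you control the field of definition of the elliptic factors \( E_1, E_2 \) before applying Masser--W\"ustholz. You invoke Silverberg's theorem to descend the full endomorphism ring of \( \fA_s \) to \( L' = L(s)(\fA_s[3]) \), a bounded-degree extension of \( L(s) \), and then realise \( E_1, E_2 \) as images of idempotents over \( L' \). The paper instead shows that \( E_1 \) and \( E_2 \) are defined over \( L(s) \) itself: by \cref{min-isogeny-unique}, the pair of elliptic curves realising the minimal isogeny degree \( N(s) \) is unique (up to swapping, which is ruled out under \( \Aut(\bC/L(s)) \) because exactly one of the two has CM), so the isomorphism classes of \( E_1 \) and \( E_2 \) are Galois-stable over \( L(s) \), and as elliptic curves they descend to \( L(s) \). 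Your route thus avoids \cref{min-isogeny-unique} entirely, at the cost of importing Silverberg's result as an external input. The paper's route has the side benefit that \( E_2 \) is defined over \( L(s) \) on the nose, a fact reused verbatim in the proof of \cref{galois-orbits-holds-sch} to bound \( \abs{\disc(\End(E_2))} \) via Brauer--Siegel; your bounded-degree extension \( L' \) would serve equally well there, but would require a one-line adjustment.
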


\begin{proof}
After a finite extension of \( L \), we may assume that \( V' \), \( \fA' \to V' \), \( \iota \colon V \to V' \) and \( s_0 \) are all defined over \( L \).

Because \( \End(\fA_{\ov\eta}) = \bZ \) and \( \dim(\fA_{\ov\eta}) = 2 \), the Mumford--Tate group of \( \fA_{\ov\eta} \) is \( \gGSp_{4,\bQ} \) (see section~\ref{sec:a2-subvars}).
Thus \( \fA \to V \) satisfies the conditions of \cref{andre-height-bound}, as modified in \cref{rmk:neron-model}.

Let \( s \) be an \( E \, \times \! \) CM point in \( V \).
The image of \( s \) under the map \( V \to \cA_2 \) induced by \( \fA \to V \) is in the intersection between the image of \( V \) and a special curve.
Since the image of \( V \) is a curve defined over a number field, we deduce that \( s \in V(\Qbar) \).

Now \( \End(\mathfrak{A}_s) \otimes \bQ \) contains \( \bQ \times F \) where \( F \) is an imaginary quadratic field.
This is a commutative \( \bQ \)-algebra of degree~\( 3 \), so cannot inject into \( \rM_2(\bQ) \).
Hence \( \fA_s \) is an exceptional abelian surface in the sense of \cref{andre-height-bound}.

Therefore by \cref{andre-height-bound}, \( h(s) \) is polynomially bounded in terms of \( [L(s):L] \), where \( h \) denotes a Weil height on \( V' \).
Let \( h_F \) denote the stable Faltings height.
As proved in \cite[p.~356]{Fal83},
\[ \abs{h_F(\fA_s) -  h(s)}  = \rO(\log  h(s)). \]
We conclude that \( h_F(\fA_s) \) is polynomially bounded in terms of \( [L(s):L] \).

By \cref{detstar-isog-comparison}, there exist elliptic curves \( E_1 \) and \( E_2 \) and a polarised isogeny \( E_1 \times E_2 \to \fA_s \) of degree~\( N(s) \), and \( N(s) \) is the minimum degree of any polarised isogeny \( E_1 \times E_2 \to \fA_s \).
Thanks to \cref{excm-polarised}, \( N(s) \) is the minimum degree of any isogeny \( E_1 \times E_2 \to \fA_s \) (polarised or not).
By \cref{min-isogeny-unique}, \( E_1 \) and \( E_2 \) are the only pair of elliptic curves whose product possesses an isogeny to~\( \fA_s \) of degree~\( N(s) \).
Therefore, for every \( \sigma \in \Aut(\bC/L(s)) \), \( E_1^\sigma \cong E_1 \) and \( E_2^\sigma \cong E_2 \) (\( E_2 \) has CM and \( E_1 \) does not, so \( \sigma \) cannot swap them).
Since elliptic curves are defined over their fields of moduli, we conclude that \( E_1 \) and \( E_2 \) are both defined over \( L(s) \).

Therefore by the Masser--W\"ustholz isogeny theorem \cite{MW93}, there exists an isogeny \( E_1 \times E_2 \to \fA_s \) of degree at most
\[ \newC* \, \max(h_F(\fA_s), [L(s):L])^{\newC*}. \]
We conclude that \( N(s) \) also satisfies this bound.
Combining this with the fact that \( h_F(\fA_s) \) is polynomially bounded in terms of \( [L(s):L] \) completes the proof.
\end{proof}

\begin{proposition} \label{galois-orbits-holds-sch}
Let \( \fA \to V \) be an abelian scheme satisfying the conditions of \cref{galois-degree-bound} (including the multiplicative reduction condition).

Then \cref{galois-orbits-conj-sch} holds for \( \fA \to V \).
\end{proposition}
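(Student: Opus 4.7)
The plan is to combine \cref{galois-degree-bound}, which already gives a lower bound for the Galois degree in terms of $N(s)$, with a suitable lower bound in terms of $\abs{\disc(\End(E_{x_2}))}$. Since
\[ \Delta'(s) = \max\{N(s),\,\abs{\disc(\End(E_{x_2}))}\}, \]
obtaining polynomial lower bounds $\#\Aut(\bC/L)\cdot s \geq \refC*\, N(s)^{\newC*}$ and $\#\Aut(\bC/L)\cdot s \geq \refC*\,\abs{\disc(\End(E_{x_2}))}^{\newC*}$ will together yield the desired bound $\#\Aut(\bC/L)\cdot s \geq \refC*\, \Delta'(s)^{\newC*}$.

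Since a closed point $s$ has $\#\Aut(\bC/L)\cdot s = [L(s):L]$, the first bound is immediate from \cref{galois-degree-bound}. For the second bound, I first note that in the proof of \cref{galois-degree-bound} it was shown that the unique pair of elliptic curves $(E_1,E_{x_2})$ whose product admits an isogeny of degree $N(s)$ to $\fA_s$ is stable under $\Aut(\bC/L(s))$; since elliptic curves are defined over their fields of moduli, this means that $E_{x_2}$ is defined over $L(s)$, so that $j(E_{x_2}) \in L(s)$ and in particular $\bQ(j(E_{x_2})) \subset L(s)$.

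Let $h$ denote the class number of the order $\End(E_{x_2})$ in the imaginary quadratic field $F = \End(E_{x_2}) \otimes \bQ$. By classical complex multiplication theory, $\bQ(j(E_{x_2}))$ is the ring class field of this order over $F$, and $[\bQ(j(E_{x_2})):\bQ] = h$. Since $\bQ(j(E_{x_2})) \subset L(s)$, a standard computation with compositums gives
\[ [L(s):L] \geq [L \cdot \bQ(j(E_{x_2})) : L] = \frac{[\bQ(j(E_{x_2})):\bQ]}{[\bQ(j(E_{x_2})) \cap L : \bQ]} \geq \frac{h}{[L:\bQ]}. \]
By Siegel's theorem on class numbers of imaginary quadratic orders (a refinement of Brauer--Siegel), there exists, for any $\epsilon > 0$, a positive constant $\newC*$ such that $h \geq \refC*\, \abs{\disc(\End(E_{x_2}))}^{1/2-\epsilon}$. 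Since $L$ is fixed, this produces the desired polynomial lower bound $[L(s):L] \geq \refC*\,\abs{\disc(\End(E_{x_2}))}^{\newC*}$.

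Combining the two bounds yields
\[ [L(s):L]^2 \geq \refC*\, N(s)^{\newC*}\abs{\disc(\End(E_{x_2}))}^{\newC*} \geq \refC*\,\Delta'(s)^{\newC*}, \]
and hence a polynomial lower bound of the required form, completing the proof of \cref{galois-orbits-conj-sch} for $\fA \to V$. The substantive work has already been done in \cref{galois-degree-bound} (which packages together André's height bound and the Masser--W\"ustholz isogeny theorem); the only additional input here is the well-known effective lower bound for class numbers of imaginary quadratic orders, so no genuinely new obstacle arises.
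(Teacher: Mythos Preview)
Your proof is correct and follows essentially the same route as the paper's: both invoke \cref{galois-degree-bound} for the \( N(s) \) part, extract from its proof that the CM elliptic curve is defined over \( L(s) \), and then use a class-number lower bound for imaginary quadratic orders to control \( \abs{\disc(\End(E_{x_2}))} \). The only cosmetic difference is that the paper cites Lenstra's refinement of Brauer--Siegel \cite[Proposition~(1.8)]{Len87} directly, whereas you unpack the argument through \( [\bQ(j(E_{x_2})):\bQ] = h \) and Siegel's bound; the content is the same.
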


\begin{proof}
Let \( L \) be a number field over which \( V \) and \( \fA \to V \) are defined.
Let \( s \in V \) be an \( E \, \times \! \) CM point.
By \cref{detstar-isog-comparison}, there exist elliptic curves \( E_1 \) and \( E_2 \) and an isogeny \( E_1 \times E_2 \to A_s \) of degree~\( N(s) \), where \( E_2 \) has CM and \( E_1 \) does not.
Then
\[ \Delta'(s) = \max \{ N(s), \abs{\disc(\End(E_2))} \}. \]
According to the argument from the proof of \cref{galois-degree-bound}, \( E_1 \) and \( E_2 \) are defined over \( L(s) \).
Therefore by \cite[Proposition~(1.8)]{Len87} (refining the Brauer--Siegel theorem), \( \abs{\disc(\End(E_2))} \) is polynomially bounded in terms of \( [L(s):L] \).
Meanwhile \cref{galois-degree-bound} tells us that \( N(s) \) is polynomially bounded in terms of \( [L(s):L] \).
\end{proof}

\begin{proof}[Proof of \cref{galois-orbits-thm-sch}]
Let \( \fA \to V \) be an abelian scheme satisfying the conditions of \cref{galois-orbits-thm-sch}.
This abelian scheme induces a morphism of varieties \( q \colon V \to \cA_2 \), also defined over~\( L \).
This morphism has finite fibres because \( V \) is a curve and \( \fA \to V \) is non-isotrivial.
Since \( \End(\fA_{\ov\eta}) = \bZ \), the image of \( V \) in \( \cA_2 \) is not contained in a proper special subvariety (see section~\ref{sec:a2-subvars}).

Hence \cref{main-thm} applies to the Zariski closure of the image of \( V \) in \( \cA_2 \).
Combining this with \cref{galois-orbits-holds-sch}, we deduce that the image of \( V \) has finite intersection with \( \Sigma \).
We conclude because a point in \( V \) is an \( E \, \times \! \) CM point if and only if its image lies in \( \Sigma \).
\end{proof}

If \( V \) is a curve satisfying the condition of \cref{galois-orbits-thm} involving the Baily--Borel compactification, the following proposition allows us to construct an abelian scheme over a finite cover of \( V \) which satisfies the multiplicative reduction condition of \cref{galois-orbits-holds-sch}, enabling us to deduce \cref{galois-orbits-thm} from \cref{galois-orbits-holds-sch}.

\begin{proposition} \label{mult-reduction}
Let \( V \subset \Ag \) be an irreducible algebraic curve such that the Zariski closure of \( V \) in the Baily--Borel compactification of \( \Ag \) intersects the zero-dimensional stratum of the compactification.
Then there exists a smooth projective curve \( \tilde{V}' \), an open subset \( \tilde{V} \subset \tilde{V}' \), a point \( s_0 \in \tilde{V}' \setminus \tilde{V} \), a finite surjective morphism \( q \colon \tilde{V} \to V \) and a semiabelian scheme \( \fA' \to \tilde{V}' \) such that \( \fA'_{|\tilde{V}} \) is an abelian scheme, the map to the coarse moduli space associated with \( \fA'_{|\tilde{V}} \to \tilde{V} \) is the composition \( \tilde{V} \overset{q}{\longrightarrow} V \hookrightarrow \Ag \), and the fibre \( \fA'_{s_0} \) is a torus.
\end{proposition}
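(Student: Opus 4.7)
The plan is to pass to a fine-moduli level cover of $\Ag$, take a smooth projective model of a component of the preimage of $V$, and extend by properness to a toroidal compactification carrying a universal semiabelian scheme; the point $s_0$ will be recovered by tracking the natural map from the toroidal to the Baily--Borel compactification.

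First, let $\cA_{g,3} \to \Ag$ be the finite surjective level-$3$ cover, which is a fine moduli space carrying a universal principally polarised abelian scheme. Fix an irreducible component $V_3$ of the preimage of $V$, let $\tilde V$ be its normalisation, and let $\tilde V' \supset \tilde V$ be its smooth projective completion. The induced morphism $q \colon \tilde V \to V$ is finite and surjective, and pulling back the universal abelian scheme along $\tilde V \to \cA_{g,3}$ yields a principally polarised abelian scheme $\fA \to \tilde V$ whose induced moduli map to $\Ag$ is $\tilde V \overset{q}{\to} V \hookrightarrow \Ag$.

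Next, invoke the Faltings--Chai theory \cite[Ch.~IV]{FC90}: for a suitable smooth projective cone decomposition $\Sigma$, there exists a smooth projective toroidal compactification $\cA_{g,3}^{\Sigma}$ of $\cA_{g,3}$, equipped with a semiabelian scheme $\mathcal{U}^{\Sigma} \to \cA_{g,3}^{\Sigma}$ extending the universal abelian scheme, together with a proper surjection $\pi \colon \cA_{g,3}^{\Sigma} \to \cA_{g,3}^{BB}$ to the Baily--Borel compactification that restricts to the identity on $\cA_{g,3}$. By the valuative criterion of properness applied to the smooth projective curve $\tilde V'$, the morphism $\tilde V \to \cA_{g,3} \hookrightarrow \cA_{g,3}^{\Sigma}$ extends uniquely to $f \colon \tilde V' \to \cA_{g,3}^{\Sigma}$. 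Setting $\fA' := f^{*} \mathcal{U}^{\Sigma}$ then gives a semiabelian scheme on $\tilde V'$ whose restriction to $\tilde V$ is $\fA$.

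To locate $s_0$, note that the finite map $\cA_{g,3}^{BB} \to \Ag^{BB}$ sends the $0$-dimensional stratum of $\cA_{g,3}^{BB}$ surjectively onto the $0$-dimensional stratum $\cA_0 \subset \Ag^{BB}$. Hence the hypothesis that the closure of $V$ in $\Ag^{BB}$ meets $\cA_0$ implies that the closure of $V_3$ in $\cA_{g,3}^{BB}$ contains at least one $0$-dimensional cusp $p$; the closed curve $(\pi \circ f)(\tilde V')$ contains $V_3$ and therefore $p$, so we may choose $s_0 \in \tilde V' \setminus \tilde V$ with $\pi(f(s_0)) = p$. It remains to check that $\fA'_{s_0}$ is a torus. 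By the Mumford--Faltings--Chai description of $\mathcal{U}^{\Sigma}$ in a formal neighbourhood of the toroidal boundary \cite[Ch.~IV, Theorem~5.7]{FC90}, a point of $\cA_{g,3}^{\Sigma}$ whose $\pi$-image lies in the $0$-dimensional stratum of $\cA_{g,3}^{BB}$ corresponds to a totally degenerate principal degeneration datum, whose abelian part is trivial; the associated semiabelian fibre is thus a pure torus.

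The main obstacle is this last identification. The moduli interpretation of points of the toroidal boundary is formal rather than literally geometric, so no soft argument suffices: one must invoke the explicit compatibility between the Faltings--Chai construction of $\mathcal{U}^{\Sigma}$ and the stratification of $\cA_{g,3}^{BB}$ indexed by conjugacy classes of rational parabolic subgroups of $\gGSp_{2g}$, under which the $0$-dimensional stratum corresponds precisely to the minimal (Siegel) parabolic and hence to the maximal, rank-$g$ degeneration. Once this is in hand, every other step is a formal combination of properness, normalisation, and the universal property of the level cover.
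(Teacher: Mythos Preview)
Your proof is correct and follows essentially the same route as the paper's: pass to the level-$3$ cover $\cA_{g,3}$, lift $V$ to a component $V_3$, use a Faltings--Chai toroidal compactification carrying the universal semiabelian scheme, and exploit compatibility of the toroidal and Baily--Borel stratifications to see that a point lying over the $0$-dimensional cusp has purely toric fibre. The only cosmetic differences are that the paper first takes the Zariski closure $\overline{V_3}$ inside the toroidal compactification and then normalises (whereas you normalise $V_3$ first and then extend by the valuative criterion --- these yield the same curve and the same map), and that the paper routes the comparison map directly as $\overline{\cA_{g,3}}\to\Ag^*$ rather than through $\cA_{g,3}^{BB}$; for the key torus conclusion the paper cites \cite[Ch.~V, Theorem~2.3(5)]{FC90}, which is the precise stratification-compatibility statement you invoke informally in your last paragraph.
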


\begin{proof}
Let \( \Ag^* \) be the Baily--Borel compactification of \( \Ag \).
Let \( V^* \) be the closure of \( V \) in \( \Ag^* \).
Recall that \( \Ag^* \) has a stratification
\[ \Ag^* = \Ag \sqcup \cA_{g-1} \sqcup \dotsb \sqcup \cA_1 \sqcup \cA_0. \]
By hypothesis, \( V^* \) intersects the stratum \( \cA_0 \).
This stratum is just a closed point, which we shall call \( s^* \).

Let \( \Agt \) denote the moduli space of principally polarised abelian varieties with full symplectic level-\( 3 \) structure.
Let \( \ov{\Ag} \) and \( \ov{\Agt} \) denote toroidal compactifications of \( \Ag \) and \( \Agt \) respectively, as defined in \cite[Ch.~IV, secs. 5 and~6]{FC90}.
We construct these toroidal compactifications with respect to some projective smooth admissible cone decomposition for \( \bZ^g \) (see \cite[Ch.~V, Definition~5.1]{FC90} and remark~(c) following that definition).
Then \( \ov{\Ag} \) is a proper algebraic stack \cite[Ch.~IV, Theorem~5.7]{FC90} and \( \ov{\Agt} \) is a projective algebraic variety \cite[Ch.~V, sec.~5]{FC90}.
Note that \cite{FC90} constructs \( \ov{\Ag} \) as a stack over \( \bZ \) and \( \ov{\Agt} \) as a scheme over \( \bZ[\zeta_3, 1/3] \), but we require them only over \( \bQ \) and \( \bQ(\zeta_3) \) respectively.

We have the following commutative diagram of morphisms of stacks over \( \bQ(\zeta_3) \):
\[ \xymatrix{
   \Agt       \ar[r] \ar[d]
 & \ov{\Agt}  \ar[d]^p
\\ \Ag        \ar[r]
 & \ov{\Ag}   \ar[r]^{\ov\pi}
 & \Ag^*
} \]
The composition \( \ov\pi \circ p \colon \ov{\Agt} \to \Ag^* \) is a morphism of stacks between varieties, and consequently it is a morphism of varieties.

Let \( V_3 \) be an irreducible component of the preimage of \( V \) in \( \Agt \) which surjects onto \( V \).
Let \( \ov{V_3} \) be the Zariski closure of \( V_3 \) in \( \ov{\Agt} \).
This is a projective variety and hence \( \ov\pi \circ p(\ov{V_3}) \) is a closed subset of \( \Ag^* \).
Furthermore, \( \ov\pi \circ p \) restricts to the natural map \( \Agt \to \Ag \) and hence \( \ov\pi \circ p(V_3) = V \).
Therefore \( s^* \in \ov\pi \circ p(\ov{V_3}) \).
Hence we can choose a closed point \( \ov{s_3} \in \ov{V_3} \) such that \( \ov\pi \circ p(\ov{s_3}) = s^* \).

By \cite[Ch.~IV, Theorem~5.7~(3)]{FC90}, there is a semiabelian scheme \( \fG \) over the stack \( \ov{\Ag} \) which extends the universal abelian scheme over \( \Ag \) (considered as an algebraic stack).
We can pull this back to a semiabelian scheme \( \fG_3 \to \ov{\Agt} \).
Define a stratification on \( \ov{\Ag} \) by the dimensions of the abelian part of the fibres of \( \fG \).
According to \cite[Ch.~V, Theorem~2.3~(5)]{FC90}, \( \ov\pi \) is compatible with the stratifications of \( \ov{\Ag} \) and \( \Ag^* \).
It follows that \( \fG_{3,\ov{s_3}} \) is a torus.

To finish the proof, let \( \tilde{V'} \) be the normalisation of \( \ov{V_3} \), let \( \tilde{V} \) be the preimage of \( V_3 \) in \( \tilde{V'} \), let \( s_0 \) be a preimage of \( \ov{s_3} \) in \( \tilde{V'} \) and let \( \fA' \) be the pullback of \( \fG_3 \) to~\( \tilde{V'} \).
\end{proof}

\begin{proposition} \label{go-sch-implies-go}
Let \( V \subset \cA_2 \) be an algebraic curve satisfying the conditions of \cref{galois-orbits-thm}.
Then \cref{galois-orbits-conj} holds for~\( V \).
\end{proposition}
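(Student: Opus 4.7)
The plan is to use \cref{mult-reduction} to pass from $V$ to a finite cover $\tilde V$ that carries a principally polarised abelian scheme with a semiabelian extension having a torus fibre, apply \cref{galois-orbits-holds-sch} on $\tilde V$, and then push the resulting Galois orbit bound back down to $V$, converting the complexity $\Delta'$ into $\Delta$ via \cref{comp-poly-equiv}.

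More concretely, I would first apply \cref{mult-reduction} (with $g = 2$) to obtain a smooth projective curve $\tilde V'$, a dense open $\tilde V \subset \tilde V'$, a point $s_0 \in \tilde V' \setminus \tilde V$, a finite surjective morphism $q \colon \tilde V \to V$, and a semiabelian scheme $\fA' \to \tilde V'$ with $\fA'_{s_0}$ a torus, whose restriction $\fA := \fA'_{|\tilde V}$ is an abelian scheme with moduli map equal to $\tilde V \xrightarrow{q} V \hookrightarrow \cA_2$. Since $V$ is not contained in any proper special subvariety of $\cA_2$, the classification recalled in \cref{sec:a2-subvars} forces $\End(\fA_{\ov\eta}) = \bZ$; non-isotriviality of $\fA$ follows because $q$ is surjective onto the curve $V$; and the principal polarisation is inherited from the universal family. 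The proof of \cref{mult-reduction} is entirely algebraic over a number field, so $\tilde V$, $\tilde V'$, $s_0$, $q$ and $\fA'$ may be taken defined over $\Qbar$. Consequently the hypotheses of \cref{galois-orbits-holds-sch} are met and \cref{galois-orbits-conj-sch} holds for $\fA \to \tilde V$.

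To deduce \cref{galois-orbits-conj} for $V$, let $L$ be a finitely generated subfield of $\bC$ over which $V$ is defined, and let $\tilde L$ be a finite extension of $L$ over which $\tilde V$, $\tilde V'$, $s_0$, $q$ and $\fA' \to \tilde V'$ are all defined. For any $s \in V \cap \Sigma$, choose a preimage $\tilde s \in q^{-1}(s) \subset \tilde V$; since the fibre $\fA_{\tilde s}$ is isomorphic to the abelian surface at the moduli point $s$, the point $\tilde s$ is an $E \times\!$ CM point of $\fA \to \tilde V$ in the sense of \cref{sec:alt-state}, and $\Delta'(\tilde s) = \Delta'(Z)$ where $Z$ is the unique $E \times\!$ CM curve in $\cA_2$ through $s$. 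The case of \cref{galois-orbits-conj-sch} already established for $\fA \to \tilde V$ therefore gives
\[ \# \Aut(\bC/\tilde L) \cdot \tilde s \;\geq\; \newC* \, \Delta'(Z)^{\newC*}. \]
Because $q$ is defined over $\tilde L$, the map $\sigma \mapsto \sigma \cdot s$ factors $\sigma \mapsto \sigma \cdot \tilde s$ with fibres of size at most $\deg(q)$, and $\Aut(\bC/\tilde L) \subset \Aut(\bC/L)$, so
\[ \# \Aut(\bC/L) \cdot s \;\geq\; \# \Aut(\bC/\tilde L) \cdot s \;\geq\; \tfrac{1}{\deg q} \, \# \Aut(\bC/\tilde L) \cdot \tilde s. \]
Combining these inequalities with \cref{comp-poly-equiv}, which polynomially relates $\Delta'(Z)$ and $\Delta(Z)$, yields the bound $\# \Aut(\bC/L) \cdot s \geq \newC* \Delta(Z)^{\newC*}$ required by \cref{galois-orbits-conj}.

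The only non-routine ingredient is the construction of the cover $\tilde V$ with a torus-fibre degeneration, which is exactly what \cref{mult-reduction} furnishes; granted that, the proof is a bookkeeping combination of \cref{galois-orbits-holds-sch} and \cref{comp-poly-equiv}, the only mild subtlety being the verification that a preimage under $q$ of an $E \times\!$ CM point in $V$ is itself an $E \times\!$ CM point of the abelian scheme on $\tilde V$, with matching value of $\Delta'$.
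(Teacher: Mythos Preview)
Your proof is correct and follows essentially the same route as the paper: apply \cref{mult-reduction} to obtain the cover with a torus degeneration, invoke \cref{galois-orbits-holds-sch}, and descend the resulting Galois-orbit bound to~$V$. The paper's version is terser, citing \cref{galois-orbits-equiv} for the descent step rather than writing it out; you have effectively unpacked the relevant direction of that equivalence (using the specific cover from \cref{mult-reduction} in place of the $\cA_{2,3}$-cover employed there), together with \cref{comp-poly-equiv}.
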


\begin{proof}
Let \( L \) be a number field over which \( V \) is defined.
We can construct  \( \tilde{V}' \), \( q \colon \tilde{V} \to V \), \( s_0 \in \tilde{V}' \) and \( \fA' \) as in \cref{mult-reduction}.
We can find a finite extension \( \tilde{L}/L \) such that \( \tilde{V}' \), \( q \colon \tilde{V} \to V \), \( s_0 \) and \( \fA' \to \tilde{V}' \) are all defined over \( \tilde{L} \).
The abelian scheme \( \fA'_{|\tilde{V}} \to \tilde{V} \) and the point \( s_0 \in \tilde{V}'(\ov\bQ) \) satisfy the conditions of \cref{galois-orbits-holds-sch}.
We can therefore complete the proof using \cref{galois-orbits-equiv,galois-orbits-holds-sch}.
\end{proof}

Combining \cref{main-thm,go-sch-implies-go} proves \cref{galois-orbits-thm}.

\end{document}